\documentclass[hidelinks,onefignum,onetabnum]{siamart251104}

\usepackage[utf8]{inputenc}
\usepackage[T1]{fontenc}
\usepackage{amsmath,amssymb,amsfonts}
\usepackage{mathrsfs}
\usepackage{mathtools}
\usepackage{enumitem}

\newsiamthm{assumption}{Assumption}
\newsiamthm{problem}{Problem}
\newsiamremark{remark}{Remark}

\crefalias{assumption}{assumption}

\crefname{equation}{}{}
\crefname{assumption}{Assumption}{Assumptions}
\Crefname{assumption}{Assumption}{Assumptions}
\crefname{lemma}{Lemma}{Lemmas}
\crefname{corollary}{Corollary}{Corollaries}
\crefname{proposition}{Proposition}{Propositions}
\crefname{definition}{Definition}{Definitions}
\crefname{theorem}{Theorem}{Theorems}

\headers{Viscosity Solutions of Stochastic HJB Equations}{D. Liang and Q. Meng}

\title{Viscosity Solutions of Stochastic Hamilton--Jacobi--Bellman Equations with Jumps\thanks{Submitted to the editors \today.
		\funding{This work was supported by the National Natural Science Foundation of China (No. 12271158).}}}

\author{Dunxiang Liang\thanks{Department of Mathematics, Huzhou Normal University, Zhejiang 313000, China (\email{lianglion3895@gmail.com}).}
	\and Qingxin Meng\thanks{Corresponding author. Department of Mathematics, Huzhou Normal University, Zhejiang 313000, China (\email{mqx@zjhu.edu.cn}).}}

\begin{document}
	\maketitle
	
	\begin{abstract}
		This paper studies the stochastic optimal control of jump-diffusion processes and the associated fully nonlinear backward stochastic Hamilton--Jacobi--Bellman (BSHJB) equations. We establish the dynamic programming principle (DPP) via backward semigroups to characterize the value function. To handle non-local integro-differential operators and polynomial growth, we introduce a stochastic viscosity solution framework based on semimartingale test functions and global tangency conditions. Existence is proved using the measurable selection theorem and the generalized It\^o--Kunita formula. Finally, under a super-parabolicity condition, we establish a weak comparison principle and prove global uniqueness via localized bounding envelopes and backward induction.
	\end{abstract}
	
	\begin{keywords}
		Stochastic optimal control, Stochastic Hamilton--Jacobi--Bellman equations, Viscosity solutions, Jump-diffusion processes, Backward stochastic differential equations
	\end{keywords}
	
	\begin{MSCcodes}
		49L25, 60J75, 93E20
	\end{MSCcodes}
	
	\section{Introduction}
	
	The optimal control of stochastic systems in random environments has led to the extensive development of backward stochastic differential equations (BSDEs) \cite{PardouxPeng90} and the comprehensive theory of forward-backward stochastic differential equations (FBSDEs) \cite{MaYong99Book, MaYong07Book}. A central object arising from such studies is the Hamilton--Jacobi--Bellman (HJB) equation, which characterizes the value function of a stochastic optimal control problem. In the classical Markovian framework with deterministic coefficients, the HJB equation is a deterministic second-order PDE whose well-posedness under smoothness conditions is well understood. When the coefficients are random, however, the associated HJB equation becomes a backward stochastic partial differential equation (BSPDE) whose solution is a random field. The additional presence of non-local integral operators from jump processes and polynomial growth in the coefficients introduces profound analytical challenges that render both classical smooth solutions and standard Sobolev-space techniques inapplicable.
	
	The theory of stochastic HJB equations with random coefficients has been developed along two complementary lines: classical/weak solutions in Sobolev spaces, and viscosity solutions. Along the first line, Peng \cite{Peng1992} studied the stochastic HJB equation and established the existence and uniqueness of a Sobolev weak solution under the assumption that the diffusion coefficient does not depend on the control. Subsequently, Peng \cite{Peng1997} introduced the backward stochastic semigroup method, which provides a powerful tool for characterizing the value function via dynamic programming in stochastic optimal control problems. More recently, Meng et al. \cite{MengDongShenTang2023} extended this Sobolev framework to jump-diffusion systems with random coefficients, proving that the value function is a classical solution of the associated stochastic HJB equation under suitable regularity assumptions. While these results provide rigorous foundations, they inherently require sufficient Sobolev regularity of the solution, a condition that is generally not satisfied in fully nonlinear settings with non-smooth value functions.
	
	The viscosity solution approach, pioneered by Crandall and Lions \cite{CrandallLions83}, offers an alternative that accommodates non-smooth solutions. For deterministic jump-diffusion systems, Li and Peng \cite{LiPeng2009} extended the viscosity solution theory to HJB equations with non-local integral operators, proving that the value function is a viscosity solution of the associated nonlocal HJB integro-partial differential equation. In the stochastic setting with random coefficients, Buckdahn and Ma \cite{BuckdahnMa01I, BuckdahnMa01II, BuckdahnMa02} developed the concept of stochastic viscosity solutions for fully nonlinear SPDEs via a Doss--Sussmann-type transformation, later unified with rough path theory \cite{BuckdahnKellerMaZhang20, BuckdahnMa07}. Building on this foundation, Qiu \cite{Qiu2018} introduced an alternative and more direct framework based on semimartingale test functions that directly incorporate the stochastic dynamics of the controlled system, proving that the value function is the maximal viscosity solution of the associated stochastic HJB equation, with uniqueness established under a super-parabolicity condition. Qiu and Wei \cite{QiuWei2019} further proved uniqueness under standard Lipschitz assumptions for stochastic HJ equations, and Qiu and Zhang \cite{QiuZhang2023} extended the theory to zero-sum two-player stochastic differential games. However, these stochastic viscosity solution frameworks are developed exclusively for continuous diffusions and do not account for jumps.
	
	In parallel, the viscosity solution theory was extended to path-dependent settings by Ekren, Touzi, and Zhang \cite{EkrenTouziZhang2016a, EkrenTouziZhang2016b, EkrenZhang2016}, who established a rigorous analytical foundation for fully nonlinear path-dependent PDEs. Within this framework, Tang and Zhang \cite{TangZhang2013} characterized the value functional as the unique viscosity solution of the associated path-dependent Bellman equation. Zhou \cite{Zhou2023} extended this theory to second-order path-dependent HJB equations, highlighting applications to backward stochastic HJB equations, and Zhou, Touzi, and Zhang \cite{ZhouTouziZhang2024} further extended the theory to mean field control with common noise. These works provide conceptual tools for managing randomness in viscosity solutions, albeit within a path-dependent infinite-dimensional setting rather than the finite-dimensional Markovian framework of the present paper.
	
	A critical gap emerges from the above review: the existing literature covers three dimensions of difficulty---random coefficients, non-local jump operators, and viscosity solutions---but no single work addresses all three simultaneously. More specifically, (i) the classical/weak solution approach \cite{Peng1992, MengDongShenTang2023} handles jumps and random coefficients but requires Sobolev regularity; (ii) the deterministic viscosity solution theory \cite{LiPeng2009} accommodates jumps but not random coefficients; (iii) the stochastic viscosity solution theory \cite{Qiu2018, QiuWei2019, QiuZhang2023} handles random coefficients but not jumps. The non-local nature of the L\'evy measure necessitates global (rather than local) tangency conditions in the definition of viscosity solutions, substantially altering the analytical framework. Moreover, polynomial growth in the coefficients prevents standard compactness arguments. The recursive cost considered in this paper is defined via a BSDE, introducing a backward structure that is absent in the forward stochastic HJB equations studied by Qiu \cite{Qiu2018} and Buckdahn--Ma \cite{BuckdahnMa01I}.
	
	In this paper, we overcome these structural difficulties and establish a complete well-posedness theory for the stochastic viscosity solutions of fully nonlinear backward stochastic HJB (BSHJB) equations with jumps and polynomial growth. Our approach extends the semimartingale test function framework of \cite{Qiu2018} (originally developed for continuous diffusions) to handle jump processes. This extension is nontrivial: unlike the local touching conditions in \cite{Qiu2018}, the presence of the L\'evy measure requires global tangency conditions over the entire space $\mathbb{R}^n$ to ensure that the non-local integro-differential operator is well-defined when applied to test functions. We also incorporate the backward semigroup method \cite{Peng1997} to characterize the recursive cost via dynamic programming, and the generalized It\^o--Kunita formula \cite{ChenTang2011} to handle the interplay between spatial derivatives and jump dynamics.
	
	The main contributions of the paper are threefold. First, we establish the Dynamic Programming Principle (DPP) for the recursive cost functional driven by jump-diffusions via the backward semigroup approach, showing that the DPP holds in the presence of non-local jumps and polynomial growth. Second, we prove the existence of a stochastic viscosity solution using the measurable selection theorem and the generalized It\^o--Wentzell (It\^o--Kunita) formula for jump diffusions, alongside carefully constructed global tangency conditions and exit-time estimates that account for the non-local nature of the jump measure. Third, under a super-parabolicity condition (deterministic diffusion coefficient), we establish a weak comparison principle and prove global uniqueness via localized bounding envelopes and a backward induction scheme.
	
	The rest of the paper is organized as follows. Section 2 introduces the probabilistic framework and the recursive optimal control problem. Section 3 provides a priori estimates and proves the DPP. The BSHJB equation is formally derived in Section 4. Section 5 defines stochastic viscosity solutions and proves existence. Section 6 establishes the weak comparison principle and uniqueness. The Appendix collects technical estimates used in the main proofs.
	
	\section{Preliminaries and Notation}
	
	\subsection{Probability Space and Driving Processes}
	
	Let $T > 0$ be a fixed finite time horizon. We work on a complete probability space $(\Omega, \mathcal{F}, \mathbb{P})$ equipped with a right-continuous, $\mathbb{P}$-complete filtration $\mathbb{F} := \{\mathcal{F}_t\}_{t\in[0,T]}$ such that $\mathcal{F}_T = \mathcal{F}$. We denote by $\mathbb{E}[\cdot]$ the expectation under $\mathbb{P}$, by $\mathcal{P}$ the predictable $\sigma$-algebra on $\Omega \times [0, T]$ associated with $\mathbb{F}$, and by $\mathcal{B}(E)$ the Borel $\sigma$-algebra of a topological space $E$.
	
	Let $W(\cdot) := \big(W_1(t), \dots, W_d(t)\big)^\top_{t\in[0,T]}$ be a $d$-dimensional standard Brownian motion. Let $(E, \mathcal{B}(E), \nu)$ be a measure space with finite measure $\nu(E) < \infty$, and $\eta$ be a mutually independent stationary Poisson point process taking values in $E$ with characteristic measure $\nu$. We assume $\mathbb{F}$ is exactly the $\mathbb{P}$-augmentation of the natural filtration generated by $W$ and $\eta$. The induced counting measure $\mu$ and its compensated random martingale measure $\tilde{\mu}$ are cohesively defined as:
	\begin{equation}
		\begin{aligned}
			\mu\big((0, t] \times A\big) &:= \#\{s \le t \mid \eta(s) \in A\}, \\
			\tilde{\mu}(dt, de) &:= \mu(dt, de) - dt\nu(de), \quad \text{for } t > 0, A \in \mathcal{B}(E).
		\end{aligned}
	\end{equation}
	
	Let $\mathcal{T}_{[0,T]}$ denote the set of all stopping times taking values in $[0, T]$. For any $\tau \in \mathcal{T}_{[0,T]}$, we define $\mathcal{T}_\tau := \{ \theta \in \mathcal{T}_{[0,T]} \mid \theta \ge \tau \ \text{a.s.} \}$. The closed stochastic interval associated with two stopping times $\tau, \gamma \in \mathcal{T}_{[0,T]}$ is defined by:
	\begin{equation}
		[\tau, \gamma] := \big\{ (t, \omega) \in [0, T] \times \Omega \mid \tau(\omega) \le t \le \gamma(\omega) \big\},
	\end{equation}
	with the open and half-open stochastic intervals $[\tau, \gamma)$, $(\tau, \gamma]$, and $(\tau, \gamma)$ defined accordingly.
	
	\subsection{Notation and Abstract Function Spaces}
	
	Let $(B, \|\cdot\|_B)$ be a Banach space. For $t \in [0, T]$, let $L^0(\Omega, \mathcal{F}_t; B)$ denote the space of $B$-valued $\mathcal{F}_t$-measurable random variables, and $L^q(\Omega, \mathcal{F}_t; B)$ ($q \ge 1$) be its subspace with $\mathbb{E}[\|\xi\|^q_B] < \infty$. To handle non-local jumps, $L^2_\nu(E; \mathbb{R}^k)$ denotes the Hilbert space of Borel measurable functions $r : E \to \mathbb{R}^k$ equipped with the norm:
	\begin{equation}
		\|r\|_\nu := \left( \int_E |r(e)|^2 \nu(de) \right)^{1/2} < \infty.
	\end{equation}
	
	For any $q \ge 1$, we introduce the following Banach spaces of stochastic processes (which reduce to classical FBSDE spaces when $B$ is Euclidean): $\mathcal{S}^q(B)$ for $\mathbb{F}$-adapted c\`adl\`ag processes $X$, $\mathcal{H}^q(B)$ for predictable processes $Z$, and $\mathcal{H}^q_\nu(B)$ for $\mathcal{P} \otimes \mathcal{B}(E)$-measurable processes $K$. Their respective norms are defined as:
	\begin{align}
		\|X\|_{\mathcal{S}^q(B)} &:= \left( \mathbb{E} \bigg[ \sup_{t\in[0,T]} \|X_t\|^q_B \bigg] \right)^{1/q} < \infty, \\
		\|Z\|_{\mathcal{H}^q(B)} &:= \left( \mathbb{E} \bigg[ \left( \int_0^T \|Z_t\|^2_B dt \right)^{q/2} \bigg] \right)^{1/q} < \infty, \\
		\|K\|_{\mathcal{H}^q_\nu(B)} &:= \left( \mathbb{E} \bigg[ \left( \int_0^T \int_E \|K_t(e)\|^2_B \nu(de) dt \right)^{q/2} \bigg] \right)^{1/q} < \infty.
	\end{align}
	Let $C^k(\mathcal{O})$ denote the space of functions with bounded and continuous derivatives up to order $k$ on a domain $\mathcal{O} \subset \mathbb{R}^n$, and write $C(\mathcal{O}) := C^0(\mathcal{O})$. The space of locally continuous random fields is defined by:
	\begin{equation}
		\mathcal{S}^q(C_{loc}(\mathbb{R}^n)) := \bigcap_{N=1}^\infty \mathcal{S}^q\big(C(B_N(0))\big).
	\end{equation}
	
	By standard convention, we identify random fields with their pointwise representatives. Furthermore, whenever a process admits a modification with better path regularity (e.g., a c\`adl\`ag version), we systematically adopt this regular modification without altering the notation.
	
	\subsection{The Controlled State Equation}
	
	Let $U \subset \mathbb{R}^k$ be a non-empty compact control domain. For any $t \in [0, T]$, the set of admissible controls $\mathcal{U}[t, T]$ consists of all $\mathbb{F}$-progressively measurable processes $u: [t, T] \times \Omega \to U$. For any stopping time $\tau \in \mathcal{T}_{[0,T]}$, initial state $\xi \in L^{p^2}(\Omega, \mathcal{F}_\tau; \mathbb{R}^n)$ with $p \ge 2$, and control $u(\cdot) \in \mathcal{U}[\tau, T]$, the state process $X(\cdot) \equiv X_{\tau,\xi}^u(\cdot)$ is governed by the following jump-diffusion stochastic differential equation (SDE):
	\begin{equation}
		\begin{cases}
			dX(s) = b(s,X(s), u(s))ds + \sigma(s,X(s), u(s))dW(s) \\
			\quad \quad \quad + \int_E g(s, e,X(s-), u(s))\tilde{\mu}(ds, de), \quad s \in [\tau, T], \\
			X(\tau) = \xi.
		\end{cases}
	\end{equation}
	We impose the following standard assumptions on the coefficients:
	\begin{assumption} \label{assum:coefficients} \leavevmode
		\begin{enumerate}
			\item[\textit{(i)}] The mappings $b, \sigma$ are $\mathcal{P} \otimes \mathcal{B}(\mathbb{R}^n) \otimes \mathcal{B}(U)$-measurable, and $g$ is $\mathcal{P} \otimes \mathcal{B}(E) \otimes \mathcal{B}(\mathbb{R}^n) \otimes \mathcal{B}(U)$-measurable. They are continuous in $(x, u)$ for $\mathbb{P}$-a.e. $\omega \in \Omega$ and all $t \in [0, T]$.
			\item[\textit{(ii)}] There exist a constant $C > 0$ and a deterministic nonnegative function $\rho(e)$ on $E$ such that for $\mathbb{P}$-a.s. $\omega \in \Omega$, all $t \in [0, T]$, $x, x' \in \mathbb{R}^n$, and $u, u' \in U$:
			\begin{align}
				|b(t, x, u) - b(t, x', u')| + \|\sigma(t, x, u) - \sigma(t, x', u')\| &\le C\big(|x - x'| + |u - u'|\big), \\
				|g(t, e, x, u) - g(t, e, x', u')| &\le \rho(e)\big(|x - x'| + |u - u'|\big), \\
				|b(t, x, u)| + \|\sigma(t, x, u)\| &\le C(1 + |x| + |u|), \\
				|g(t, e, x, u)| &\le \rho(e)(1 + |x| + |u|).
			\end{align}
			\item[\textit{(iii)}] The deterministic function $\rho(e)$ satisfies the exponential integrability condition: $\int_E \exp\{\rho(e)\}\nu(de) < \infty$.
		\end{enumerate}
	\end{assumption}
	
	\begin{remark} \label{rem:exponential_moment}
		The exponential moment condition in Assumption \ref{assum:coefficients}(iii) is a technical simplification to uniformly bound the Hamiltonian, as it guarantees the finiteness of $\int_{\{\rho \ge 1\}} \nu(de)$ and $\int_{\{\rho < 1\}} \rho(e)^2 \nu(de)$. While this excludes heavy-tailed jumps, our framework can be extended under a weaker polynomial condition $\int_E \rho(e)^{p^2} \nu(de) < \infty$ at the expense of lengthier moment estimates.
	\end{remark}
	
	\begin{remark}
		Assumption \ref{assum:coefficients}(iii) strengthens the conventional $L^2$-integrability. Since $\int_E \exp\{\rho(e)\}\nu(de) < \infty$ implies $\int_E \rho(e)^q \nu(de) < \infty$ for every $q \ge 1$, together with the linear growth bound $|g| \le \rho(e)(1+|x|+|u|)$, we immediately obtain high-order moment bounds for any $p \ge 2$:
		\begin{align}
			\int_E |g(t, e, x, u) - g(t, e, x', u')|^{p^2} \nu(de) &\le \tilde{C}\big(|x - x'|^{p^2} + |u - u'|^{p^2}\big), \\
			\int_E |g(t, e, x, u)|^{p^2} \nu(de) &\le \tilde{C}\big(1 + |x|^{p^2} + |u|^{p^2}\big).
		\end{align}
		Consequently, classical SDE theory ensures the forward equation admits a unique strong solution $X^{u}_{\tau,\xi}(\cdot) \in \mathbb{S}^{p^2}(\mathbb{R}^n)$.
	\end{remark}
	
	\subsection{The Recursive Cost Functional}
	
	We characterize the recursive cost functional via a decoupled backward stochastic differential equation (BSDE). For any admissible pair $(u(\cdot),X(\cdot))$ on $[\tau, T]$, consider the following BSDE with jumps:
	\begin{equation} \label{eq:BSDE_with_jumps}
		\begin{cases}
			-dY(s) = f\left(s,X(s), u(s), Y(s), Z(s), \int_E K(s, e)l(s, e)\nu(de)\right)ds \\
			\quad \quad \quad \quad -Z(s)dW(s) - \int_E K(s, e)\tilde{\mu}(ds, de), \quad s \in [\tau, T], \\
			Y(T) = h(X(T)).
		\end{cases}
	\end{equation}
	Here, $l : [0, T]\times E \to \mathbb{R}_+$ is a deterministic, non-negative weight function. To guarantee the well-posedness of this system under polynomial growth, we impose the following assumptions on the coefficients $f, h$, and $l$:
	
	\begin{assumption} \label{assum:recursive_cost} \leavevmode
		\begin{enumerate}
			\item[\textit{(i)}] The generator $f$ is $\mathcal{P} \otimes \mathcal{B}(\mathbb{R}^n) \otimes \mathcal{B}(U) \otimes \mathcal{B}(\mathbb{R}) \otimes \mathcal{B}(\mathbb{R}^d) \otimes \mathcal{B}(\mathbb{R})$-measurable; the weight function $l$ is $\mathcal{P} \otimes \mathcal{B}(E)$-measurable; and the terminal cost $h$ is $\mathcal{F}_T \otimes \mathcal{B}(\mathbb{R}^n)$-measurable.
			\item[\textit{(ii)}] There exists a constant $C > 0$ such that for $\mathbb{P}$-a.s. $\omega \in \Omega$, all $t \in [0, T]$, and valid arguments $(x,u,y,z,k)$ and $(x',u',y',z',k')$:
			\begin{align}
				&|f(t, x, u, y, z, k) - f(t, x', u', y', z', k')| \nonumber \\
				&\quad \le C\big(1 + |x|^{p-1} + |x'|^{p-1} + |u|^{p-1} + |u'|^{p-1}\big)\big(|x - x'| + |u - u'|\big) \nonumber \\
				&\quad\quad + C\big(|y - y'| + |z - z'| + |k - k'|\big), \\
				&|h(x) - h(x')| \le C\big(1 + |x|^{p-1} + |x'|^{p-1}\big)|x - x'|.
			\end{align}
			\item[\textit{(iii)}] The generator and terminal cost satisfy the following growth bounds:
			\begin{equation}
				|f(t, x, u, y, z, k)| \le C\big(1 + |x|^p + |u|^p + |y| + |z| + |k|\big), \quad |h(x)| \le C(1 + |x|^p).
			\end{equation}
			\item[\textit{(iv)}]  The mapping $k \mapsto f(t, x, u, y, z, k)$ is non-decreasing. Furthermore, there exists a constant $C_l > 0$ such that $0 \le l(t, e) \le C_l(1 + |e|)$ for all $(t, e)$.
		\end{enumerate}
	\end{assumption}
	
	Given the assumptions above, for any initial data $(\tau,\xi) \in \mathcal{T}_{[0,T]} \times L^{p^2}(\Omega,\mathcal{F}_\tau;\mathbb{R}^n)$ and control $u \in \mathcal{U}[\tau,T]$, the BSDE \eqref{eq:BSDE_with_jumps} admits a unique solution $(Y_{\tau,\xi}^u, Z_{\tau,\xi}^u, K_{\tau,\xi}^u) \in \mathcal{S}^{p}(\mathbb{R}) \times \mathcal{H}^{p}(\mathbb{R}^d) \times \mathcal{H}^{p}_\nu(\mathbb{R})$.
	
	\begin{definition} \label{def:stochastic_recursive_cost}
		For any stopping time $\tau \in \mathcal{T}_{[0,T]}$ and initial state $\xi \in L^{p^2}(\Omega, \allowbreak \mathcal{F}_\tau; \mathbb{R}^n)$, the stochastic recursive cost functional is defined as the $\mathcal{F}_\tau$-measurable random variable:
		\begin{equation}
			\mathbb{J}(\tau, \xi; u(\cdot)) := Y_{\tau, \xi}^u(\tau), \quad \mathbb{P}\text{-a.s.}
		\end{equation}
		where $(Y_{\tau,\xi}^u, Z_{\tau,\xi}^u, K_{\tau,\xi}^u)$ is the unique solution to the recursive system \eqref{eq:BSDE_with_jumps} on the stochastic interval $[\tau, T]$.
	\end{definition}
	
	\subsection{The Stochastic Optimal Control Problem and Value Function}
	
	\begin{problem} \label{prob:optimal_control}
		For any initial data $(\tau, \xi) \in \mathcal{T}_{[0,T]} \times L^{p^2}(\Omega, \mathcal{F}_\tau; \mathbb{R}^n)$, the objective is to minimize the recursive cost $\mathbb{J}$ over $\mathcal{U}[\tau, T]$. Namely, we seek an optimal control $u^*$ (and the associated optimal pair $(u^*, X^*)$) satisfying:
		\begin{equation*}
			\mathbb{J}(\tau, \xi; u^*(\cdot)) = \operatorname*{ess\,inf}_{u \in \mathcal{U}[\tau, T]} \mathbb{J}(\tau, \xi; u(\cdot)), \quad \mathbb{P}\text{-a.s.}
		\end{equation*}
	\end{problem}
	
	\begin{definition} \label{def:value_function}
		We formally define the value function $\mathbb{V}(\tau, \xi)$ associated with \cref{prob:optimal_control} as the essential infimum of the recursive cost functional over all admissible controls:
		\begin{equation*}
			\mathbb{V}(\tau, \xi) := \operatorname*{ess\,inf}_{u \in \mathcal{U}[\tau,T]} \mathbb{J}(\tau, \xi; u(\cdot)) = \operatorname*{ess\,inf}_{u \in \mathcal{U}[\tau,T]} Y^{\tau, \xi; u}(\tau), \quad \mathbb{P}\text{-a.s.}
		\end{equation*}
	\end{definition}
	
	\section{Preliminary Estimates and Dynamic Programming Principle}
	
	This section establishes the a priori estimates and Dynamic Programming Principle (DPP). Since the proofs of the lemmas in this section follow analogously from the results in \cite{Qiu2018, MengDongShenTang2023}, we omit the details and focus on their adaptation to the $L^{p^2}$ setting.
	
	\subsection{A Priori Estimates}
	
	Under the growth and exponential integrability conditions of Assumption \ref{assum:coefficients}, the state process maintains $L^{p^2}$-regularity despite the non-local jumps.
	
	\begin{lemma} \label{lem:state_estimates}
		Let Assumption \ref{assum:coefficients} hold. For any $p \ge 2$, initial time $\tau \in \mathcal{T}_{[0,T]}$, states $\xi, \bar{\xi} \in L^{p^2}(\Omega, \mathcal{F}_\tau; \mathbb{R}^n)$, and controls $u(\cdot), \bar{u}(\cdot) \in \mathcal{U}[\tau, T]$, there exists a constant $C > 0$ such that the state process satisfies the following estimates $\mathbb{P}\text{-a.s.}$:\
		\begin{enumerate}[label=\textbf{(\roman*)}]
			\item \begin{equation} \label{eq:X_moment_estimate}
				\mathbb{E} \left[ \sup_{s \in [\tau, T]} |X_{\tau, \xi; u}(s)|^{p^2} \Bigg| \mathcal{F}_\tau \right] \le C \big( 1 + |\xi|^{p^2} \big).
			\end{equation}
			\item \begin{equation}
				\begin{aligned}
					&\mathbb{E} \left[ \sup_{s \in [\tau, T]} |X_{\tau, \xi; u}(s) - X_{\tau, \bar{\xi}; \bar{u}}(s)|^{p^2} \Bigg| \mathcal{F}_\tau \right] \\
					&\le C \left( |\xi - \bar{\xi}|^{p^2} + \mathbb{E} \left[ \int_\tau^T |u(v) - \bar{u}(v)|^{p^2} dv \Bigg| \mathcal{F}_\tau \right] \right).
				\end{aligned}
			\end{equation}
			\item For any stopping time $r \in \mathcal{T}_{[0,T]}$ and deterministic times $s, t \in [r, T]$,
			\begin{equation}
				\mathbb{E} \left[ \big| X_{r, \xi; u}(s) - X_{r, \xi; u}(t) \big|^{p^2} \Big| \mathcal{F}_r \right] \le C \big( 1 + |\xi|^{p^2} \big) \big( |s - t|^{p^2/2} + |s - t| \big).
			\end{equation}
		\end{enumerate}
	\end{lemma}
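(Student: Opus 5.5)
We argue in the standard way for jump SDEs with Lipschitz coefficients. Throughout we work conditionally on $\mathcal{F}_\tau$, and we write $q:=p^2\ge 4$. Because $\xi$ is $\mathcal{F}_\tau$-measurable, we may localize: on $\{|\xi|\le N\}$ every conditional expectation is finite, and a Gronwall bound with constants independent of $N$ then passes to the limit by monotone convergence. We then decompose $X(s)-\xi$ into its drift part, its Brownian stochastic integral and its compensated Poisson integral. Each part is estimated with the conditional Burkholder--Davis--Gundy inequality on $[\tau,T]$.

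For \textbf{(i)}, the drift part is controlled by H\"older's inequality, $\big|\int_\tau^t b\,dv\big|^q\le T^{q-1}\int_\tau^t|b|^q dv$. The Brownian part is controlled by BDG: $\mathbb{E}[\sup|\int\sigma dW|^q\mid\mathcal{F}_\tau]\le C\,\mathbb{E}[(\int|\sigma|^2)^{q/2}\mid\mathcal{F}_\tau]$. The jump part needs the BDG inequality for discontinuous martingales, in the Kunita form valid for $q\ge2$:
\[
\mathbb{E}\Big[\sup_{t}\Big|\int_\tau^t\!\!\int_E g\,\tilde\mu\Big|^q\Big|\mathcal{F}_\tau\Big]\le C\,\mathbb{E}\Big[\Big(\int_\tau^T\!\!\int_E|g|^2\nu(de)dv\Big)^{q/2}+\int_\tau^T\!\!\int_E|g|^q\nu(de)dv\Big|\mathcal{F}_\tau\Big].
\]
We bound the two terms using $|g|\le\rho(e)(1+|x|+|u|)$ and the moments $\int_E\rho^2\,d\nu<\infty$ and $\int_E\rho^q\,d\nu<\infty$. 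These moments are finite by Assumption \ref{assum:coefficients}(iii), exactly as in the remark following it. Compactness of $U$ absorbs $|u|$ into a constant. Combining the three parts gives $\phi(t)\le C(1+|\xi|^q)+C\int_\tau^t\phi(v)\,dv$ with $\phi(t):=\mathbb{E}[\sup_{[\tau,t]}|X|^q\mid\mathcal{F}_\tau]$, and Gronwall's lemma yields \eqref{eq:X_moment_estimate}. The stochastic starting time $\tau$ is handled by the time change $s\mapsto\tau+s$ or by indicator truncation. These are routine, since the BDG inequality holds for stopped martingales conditionally on $\mathcal{F}_\tau$.

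For \textbf{(ii)}, we repeat the same three estimates for the difference $\Delta X:=X_{\tau,\xi;u}-X_{\tau,\bar\xi;\bar u}$, now using the Lipschitz bounds. For example, $|b(v,X,u)-b(v,\bar X,\bar u)|^q\le C(|\Delta X|^q+|u-\bar u|^q)$, and the jump integrand is dominated by $\rho(e)(|\Delta X|+|u-\bar u|)$. For the $(\int|\cdot|^2)^{q/2}$ terms we apply H\"older in time, $(\int_\tau^T a^2)^{q/2}\le T^{q/2-1}\int_\tau^T a^q$. This produces $\mathbb{E}[\int|u-\bar u|^q\mid\mathcal{F}_\tau]$, which is the form stated. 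Gronwall's lemma then concludes.

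For \textbf{(iii)}, we may take $s\le t$ and bound $X(t)-X(s)$ using (i) to control the integrands. The drift gives $|t-s|^{q-1}\int_s^t(1+|X|^q)\le C(1+|\xi|^q)|t-s|^q$, which is at most $C(1+|\xi|^q)|t-s|^{q/2}$ since $|t-s|\le T$. The Brownian part gives $|t-s|^{q/2}$. The jump part gives $|t-s|^{q/2}$ from the quadratic term and $|t-s|$ from the $\int|g|^q$ term; this is precisely why both powers appear in the statement. \textbf{The main obstacle} is this jump part: we must use the correct two-term BDG/Kunita inequality and check the integrability of $\rho^{q}$. Everything else is routine Gronwall bookkeeping, and the conditional versions follow from the unconditional ones by multiplying by indicators of $\mathcal{F}_\tau$-sets.
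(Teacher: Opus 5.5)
Your proposal is correct and follows the route the paper intends. The paper omits the proof and defers to the analogous estimates in \cite{Qiu2018, MengDongShenTang2023}, which are the conditional BDG/Kunita-inequality bounds combined with Gronwall's lemma that you describe. In particular, your two-term Kunita inequality, whose $\int_E|g|^{p^2}\nu(de)$ term produces the extra linear power $|s-t|$ in (iii), is exactly the adaptation needed for the jump setting.
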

	
	\begin{lemma} \label{lem:flow_identity}
		Let Assumption \ref{assum:coefficients} hold. For any $(t, x) \in [0, T] \times \mathbb{R}^n$, stopping times $t \le \tau \le \gamma \le T$, and $u \in \mathcal{U}[t, T]$, we have the flow property:
		\begin{equation}
			X^{t,x;u}(\gamma) = X^{\tau, X^{t,x;u}(\tau); u}(\gamma), \quad \mathbb{P}\text{-a.s.}
		\end{equation}
	\end{lemma}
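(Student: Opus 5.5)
The flow identity will follow from pathwise uniqueness of the jump-diffusion SDE on the stochastic interval $[\tau, T]$. Fix $(t,x)$, stopping times $t \le \tau \le \gamma \le T$, and $u \in \mathcal{U}[t,T]$. Set $X := X^{t,x;u}$ and $\xi := X(\tau)$.

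First I check that $\xi$ is an admissible initial condition. Since $X$ is càdlàg and adapted, $\xi$ is $\mathcal{F}_\tau$-measurable. By \cref{lem:state_estimates}(i), $\mathbb{E}\sup_{s}|X(s)|^{p^2} \le C(1+|x|^{p^2})$, so $\xi \in L^{p^2}(\Omega,\mathcal{F}_\tau;\mathbb{R}^n)$. Hence $\tilde X := X^{\tau,\xi;u}$ is well defined as the unique strong solution on $[\tau,T]$, using the restriction $u|_{[\tau,T]} \in \mathcal{U}[\tau,T]$.

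Next I show that $X$ itself solves the equation started at $(\tau,\xi)$. Take the integral form of the equation for $X$ on $[t,T]$ and evaluate it at $s$ and at $\tau$. Subtracting gives, for $s \ge \tau$,
\begin{equation*}
X(s) = \xi + \int_\tau^s b\,dr + \int_\tau^s \sigma\,dW(r) + \int_\tau^s\!\int_E g(r,e,X(r-),u(r))\,\tilde\mu(dr,de).
\end{equation*}
This uses the standard identity $\int_t^s H\,dM - \int_t^{\tau\wedge s} H\,dM = \int_0^s \mathbf 1_{(\tau,T]}(r) H(r)\,dM(r)$ for stochastic integrals against $W$ and $\tilde\mu$. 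The identity holds because $\mathbf 1_{(\tau,T]}$ is predictable, and it is exactly how integrals on stochastic intervals are defined.

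So $X|_{[\tau,T]}$ and $\tilde X$ both solve the same SDE with the same initial value $\xi$ at $\tau$. Applying \cref{lem:state_estimates}(ii) with $\bar\xi = \xi$ and $\bar u = u$ gives $\mathbb{E}[\sup_{s\in[\tau,T]}|X(s)-\tilde X(s)|^{p^2}\mid\mathcal{F}_\tau] = 0$. If one prefers not to lean on that lemma here, a direct Gronwall argument works: use BDG for the martingale parts, the Lipschitz bounds of Assumption \ref{assum:coefficients}(ii), and $\int_E\rho^2\,d\nu<\infty$ for the compensated jump integral. Either way, $X$ and $\tilde X$ are indistinguishable on $[\tau,T]$. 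Evaluating at the stopping time $\gamma \in [\tau,T]$ then gives $X^{t,x;u}(\gamma) = X^{\tau,X^{t,x;u}(\tau);u}(\gamma)$ almost surely.

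The main obstacle is the second step: making rigorous the splitting of stochastic integrals at a random time, especially the compensated Poisson integral. Predictability of the integrand $g(r,e,X(r-),u(r))\mathbf 1_{(\tau,T]}(r)$ handles this, with the left limit $X(r-)$ being essential. Once that is in place, uniqueness does the rest.
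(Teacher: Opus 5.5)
Your proof is correct and is the standard argument. The paper omits this proof (deferring to \cite{Qiu2018, MengDongShenTang2023}), and the flow property there rests on the same two steps: the restriction of $X^{t,x;u}$ to $[\tau,T]$ solves the state equation started from $(\tau, X^{t,x;u}(\tau))$, and pathwise uniqueness then forces equality. One small caveat on the final step: Lemma~\ref{lem:state_estimates}(ii) is phrased for the designated solutions $X_{\tau,\xi;u}$, so applying it to the restricted $X$ only works because its proof is an a priori estimate valid for any two solutions. Your direct BDG--Gronwall argument, or simply the uniqueness of strong solutions that the paper notes right after Assumption~\ref{assum:coefficients}, is the cleaner justification.
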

	
	\subsection{A Priori Estimates for the Recursive Cost}
	
	We establish stability and polynomial growth bounds for the backward component $Y$.
	\begin{lemma} \label{lem:cost_estimates}
		Let Assumption \ref{assum:coefficients} and Assumption \ref{assum:recursive_cost} hold. For any $p \ge 2$, initial data $(\tau, \xi) \in \mathcal{T}_{[0,T]} \times L^{p^2}(\Omega, \mathcal{F}_\tau; \mathbb{R}^n)$, and control $u(\cdot) \in \mathcal{U}[\tau, T]$, the solution component $Y_{\tau,\xi}^u$ to the recursive BSDE satisfies the following conditional estimate $\mathbb{P}\text{-a.s.}$:\
		\begin{equation}
			|Y_{\tau,\xi}^u(\tau)|^p \le C_p \mathbb{E} \left[ |h(X(T))|^p + \int_\tau^T |f(s,X(s), u(s), 0, 0, 0)|^p ds \Bigg| \mathcal{F}_\tau \right],
		\end{equation}
		where $X(\cdot) \equiv X_{\tau,\xi}^u(\cdot)$. Consequently, there exists a constant $\tilde{C}_p > 0$ such that
		\begin{equation}
			|Y_{\tau,\xi}^u(\tau)| \le \tilde{C}_p \big( 1 + |\xi|^p \big), \quad \mathbb{P}\text{-a.s.}
		\end{equation}
	\end{lemma}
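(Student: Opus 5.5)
The approach is the standard $L^p$ a priori estimate for BSDEs with jumps, applied conditionally on $\mathcal{F}_\tau$, followed by the moment bound of \cref{lem:state_estimates}(i). Write $(Y,Z,K)=(Y^u_{\tau,\xi},Z^u_{\tau,\xi},K^u_{\tau,\xi})$ and $f_0(s):=f(s,X(s),u(s),0,0,0)$. By Assumption \ref{assum:recursive_cost}(ii), the generator decomposes as
\begin{equation*}
f\Big(s,X,u,Y,Z,\int_E Kl\,d\nu\Big)=f_0(s)+a_s Y(s)+\langle b_s,Z(s)\rangle+c_s\int_E K(s,e)l(s,e)\nu(de),
\end{equation*}
where $a,b,c$ are progressively measurable difference quotients with $|a_s|,|b_s|\le C$ and $0\le c_s\le C$. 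The sign $c_s\ge 0$ comes from the monotonicity in Assumption \ref{assum:recursive_cost}(iv).

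This puts the BSDE in linear form, and I will then take one of two routes. \emph{Route 1 (direct $L^p$ estimate).} Apply It\^o's formula for jump processes to $e^{\beta s}|Y(s)|^p$ on $[t,T]$ for $t\ge\tau$. The jump term $|Y_{s-}+K|^p-|Y_{s-}|^p-p|Y_{s-}|^{p-2}Y_{s-}K$ is nonnegative and, for $p\ge 2$, dominates $c_p|Y_{s-}|^{p-2}|K|^2$. The $Z$ term yields $\tfrac{p(p-1)}{2}|Y|^{p-2}|Z|^2$. The cross terms $|Y|^{p-1}(|Z|+\int|K|l\,d\nu)$ are absorbed by Young's inequality; this uses $\int_E l^2\,d\nu<\infty$, which follows from $l\le C_l(1+|e|)$ together with the finiteness of $\nu$. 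For the $f_0$ term, $|Y|^{p-1}|f_0|\le \tfrac{p-1}{p}|Y|^p+\tfrac1p|f_0|^p$. Choosing $\beta$ large, taking $\mathbb{E}[\cdot\,|\,\mathcal{F}_\tau]$ at $t=\tau$ (so the local martingale terms vanish after localization, using $Y\in\mathcal{S}^p$, $Z\in\mathcal{H}^p$, $K\in\mathcal{H}^p_\nu$), and using that $Y(\tau)$ is $\mathcal{F}_\tau$-measurable gives the first inequality.

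\emph{Route 2 (linear BSDE representation).} Write $Y(\tau)=\mathbb{E}[\Gamma_T h(X_T)+\int_\tau^T\Gamma_s f_0(s)\,ds\,|\,\mathcal{F}_\tau]$, where $\Gamma$ is the Dol\'eans-Dade exponential of $\int a\,ds+\int b\,dW+\int c\,l\,\tilde\mu$. It is positive because $c\,l\ge0$, which is precisely where Assumption \ref{assum:recursive_cost}(iv) enters. Conditional H\"older then gives $|Y(\tau)|^p\le \mathbb{E}[\sup\Gamma^{q}|\mathcal{F}_\tau]^{p/q}\,\mathbb{E}[|h|^p+\int|f_0|^p|\mathcal{F}_\tau]$, provided the conditional moments of $\Gamma$ are uniformly bounded. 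These moments are bounded because the coefficients are bounded and $\int_E l^{q}\,d\nu<\infty$.

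\emph{Main obstacle.} The delicate point is making everything conditional on a random initial time $\tau$ while keeping the constant $C_p$ deterministic. This requires the local martingale terms to have zero conditional expectation; I will handle it with a localizing sequence and monotone/dominated convergence. The jump term's lower bound for $p>2$ needs the elementary inequality $|y+k|^p-|y|^p-p|y|^{p-2}yk\ge c_p|y|^{p-2}k^2$. If that bound is troublesome, I would fall back on Route 2.

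\emph{Consequence.} Using Assumption \ref{assum:recursive_cost}(iii) with $|u|\le C$ ($U$ compact), we get $|h(X_T)|^p+\int|f_0|^p\le C(1+\sup_s|X(s)|^{p^2})$. Applying \cref{lem:state_estimates}(i) yields $|Y(\tau)|^p\le C(1+|\xi|^{p^2})$. Taking $p$-th roots gives $|Y(\tau)|\le\tilde C_p(1+|\xi|^p)$.
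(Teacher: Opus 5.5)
Your proposal is correct and follows the standard route the paper intends. The paper omits the proof of this lemma and refers to Qiu and to Meng et al.; the intended argument is exactly a conditional $L^p$ a priori estimate for the jump BSDE, followed by \cref{lem:state_estimates}(i), as you do.

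One justification needs fixing. The bound $\sup_s\int_E l(s,e)^2\,\nu(de)<\infty$ does \emph{not} follow from $l\le C_l(1+|e|)$ and $\nu(E)<\infty$, because a finite measure need not have a finite second moment. You therefore have to take it as a standing hypothesis. It is already implicit in the paper's claim that the BSDE is well posed, since otherwise $K\mapsto\int_E K l\,d\nu$ is not Lipschitz on $L^2_\nu$. Under the usual condition $l\le C(1\wedge|e|)$ it is immediate. Note that Route~2 needs nothing more than this, because the conjugate exponent there is $q=p/(p-1)\le 2$.
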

	
	\begin{lemma} \label{lem:perf_index}
		Let Assumption \ref{assum:coefficients} and Assumption \ref{assum:recursive_cost} hold. For any $p \ge 2$, initial time $\tau \in \mathcal{T}_{[0,T]}$, states $\xi, \bar{\xi} \in L^{p^2}(\Omega, \mathcal{F}_\tau; \mathbb{R}^n)$, and controls $u(\cdot), \bar{u}(\cdot) \in \mathcal{U}[\tau, T]$, there exists a constant $C > 0$ such that the performance index satisfies the following estimates $\mathbb{P}\text{-a.s.}$:\
		\begin{enumerate}[label=\textbf{(\roman*)}]
			\item \begin{equation}
				|\mathbb{J}(\tau, \xi; u(\cdot))| \le C \big( 1 + |\xi|^p \big).
			\end{equation}
			\item \begin{equation}
				\begin{aligned}
					&\big| \mathbb{J}(\tau, \xi; u) - \mathbb{J}(\tau, \bar{\xi}; \bar{u}) \big|^p \\
					&\le C \big( 1 + |\xi|^{p(p-1)} + |\bar{\xi}|^{p(p-1)} \big) \left( |\xi - \bar{\xi}|^p + \mathbb{E} \left[ \int_\tau^T |u(s) - \bar{u}(s)|^p ds \Bigg| \mathcal{F}_\tau \right] \right).
				\end{aligned}
			\end{equation}
		\end{enumerate}
	\end{lemma}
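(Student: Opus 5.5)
Part (i) follows at once from \cref{lem:cost_estimates}, since $\mathbb{J}(\tau,\xi;u)=Y^u_{\tau,\xi}(\tau)$. For (ii) the plan is a standard BSDE stability estimate combined with the forward estimates of \cref{lem:state_estimates}. Write $(Y,Z,K)$ and $(\bar Y,\bar Z,\bar K)$ for the solutions driven by $(X,u)=(X^u_{\tau,\xi},u)$ and $(\bar X,\bar u)=(X^{\bar u}_{\tau,\bar\xi},\bar u)$, and set $\delta Y=Y-\bar Y$, and similarly $\delta Z$, $\delta K$. Then $(\delta Y,\delta Z,\delta K)$ solves a linear BSDE with terminal value $\delta h:=h(X(T))-h(\bar X(T))$ and generator
\begin{equation*}
	\alpha_s\,\delta Y(s)+\beta_s\,\delta Z(s)+\gamma_s\int_E \delta K(s,e)l(s,e)\nu(de)+\delta f_s,
\end{equation*}
where $\alpha,\beta,\gamma$ are bounded by $C$ by the Lipschitz part of \cref{assum:recursive_cost}(ii), and $\delta f_s:=f(s,X,u,\bar Y,\bar Z,\bar K_l)-f(s,\bar X,\bar u,\bar Y,\bar Z,\bar K_l)$, with $\bar K_l$ the $l$-weighted integral of $\bar K$. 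Note $\gamma\ge0$ by the monotonicity in \cref{assum:recursive_cost}(iv).

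Apply the same conditional $L^p$ a priori estimate used for \cref{lem:cost_estimates}. It is obtained via It\^o's formula for $|\delta Y|^p$ (or $e^{\lambda s}|\delta Y|^p$), BDG, and absorption of the Lipschitz terms; the $k$-term is controlled because $\int_E|l|^2\nu(de)<\infty$, which follows from $l\le C_l(1+|e|)$ and the finiteness of $\nu$, provided $|e|$ is $\nu$-square integrable. I will take this integrability as implicitly assumed in the well-posedness statement. The result is
\begin{equation*}
	|\delta Y(\tau)|^p\le C\,\mathbb{E}\Big[|\delta h|^p+\int_\tau^T|\delta f_s|^p ds\,\Big|\,\mathcal{F}_\tau\Big].
\end{equation*}

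Next bound the right-hand side with the polynomial Lipschitz conditions. We have $|\delta h|^p\le C(1+|X(T)|^{p(p-1)}+|\bar X(T)|^{p(p-1)})|X(T)-\bar X(T)|^p$, and $|\delta f_s|^p$ obeys the same bound with $|X(s)-\bar X(s)|+|u(s)-\bar u(s)|$; since $U$ is compact, the $|u|^{p-1}$ factors are bounded. Apply conditional Hölder with exponents $p$ and $p/(p-1)$ to separate the two factors. The weight factor gives $\mathbb{E}[\sup|X|^{p^2}|\mathcal{F}_\tau]^{(p-1)/p}\le C(1+|\xi|^{p(p-1)})$ by \cref{lem:state_estimates}(i). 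The difference factor gives $\mathbb{E}[\sup|X-\bar X|^{p^2}|\mathcal{F}_\tau]^{1/p}$, which by \cref{lem:state_estimates}(ii) is at most $C(|\xi-\bar\xi|^{p^2}+\mathbb{E}[\int|u-\bar u|^{p^2}|\mathcal{F}_\tau])^{1/p}$, that is, at most $C(|\xi-\bar\xi|^p+\mathbb{E}[\int|u-\bar u|^{p^2}|\mathcal{F}_\tau]^{1/p})$.

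The main obstacle is converting this control term into the stated $\mathbb{E}[\int|u-\bar u|^p ds|\mathcal{F}_\tau]$. Compactness of $U$ gives $|u-\bar u|^{p^2}\le D^{p^2-p}|u-\bar u|^p$, where $D=\operatorname{diam}U$. However, the exponent $1/p$ outside the conditional expectation remains, and it cannot be removed by bounding alone. To avoid it, I would not apply Hölder on the $u$-part of $\delta f$ at all. Instead, split $\delta f$ into an $x$-difference at fixed $u$ and a $u$-difference at fixed $\bar X$. The $u$-difference contributes $\mathbb{E}[\int(1+|\bar X|^{p(p-1)})|u-\bar u|^p|\mathcal{F}_\tau]$ directly. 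For the $x$-part, and for the $u$-contribution inside $X-\bar X$, I would redo the forward estimate at level $p$ with weights, or accept a constant depending on $\xi$ via the $\mathcal{F}_\tau$-measurable weight. Combining the pieces yields (ii).
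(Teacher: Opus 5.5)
Part (i) is fine, but your argument for (ii) does not close, and the repair you sketch at the end does not work. The problem is the first step. The $L^p$ a priori estimate $|\delta Y(\tau)|^p\le C\,\mathbb{E}[|\delta h|^p+\int_\tau^T|\delta f_s|^p\,ds\,|\,\mathcal{F}_\tau]$ puts the $p$-th power inside the conditional expectation. The polynomial weight therefore enters as $1+|X_s|^{p(p-1)}+|\bar X_s|^{p(p-1)}$, which has conditional moments only up to order $p/(p-1)$. Any Hölder split then pushes the difference factor into $L^{p^2}$ and leaves the power $1/p$ on the control term, exactly as you found.

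Your split of $\delta f$ does not remove this. The term $\mathbb{E}[\int_\tau^T(1+|\bar X_s|^{p(p-1)})|u_s-\bar u_s|^p\,ds\,|\,\mathcal{F}_\tau]$ is not of the form $C(1+|\bar\xi|^{p(p-1)})\,\mathbb{E}[\int_\tau^T|u_s-\bar u_s|^p\,ds\,|\,\mathcal{F}_\tau]$. Take $\bar u\equiv u_0$, and let $u_s=u_1\neq u_0$ exactly on $\{|\bar X_s|>M\}$ and $u_s=u_0$ elsewhere. This is an admissible feedback of the fixed process $\bar X$, and the ratio of the two quantities is at least of order $M^{p(p-1)}/(1+|\bar\xi|^{p(p-1)})$, which is unbounded in $M$. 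For the same reason there is no $\mathcal{F}_\tau$-measurable weight to pull out of the $X-\bar X$ part, since the weight is evaluated at $X_s$ with $s>\tau$.

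The missing idea is to keep the $p$-th power outside the expectation. Linearize as you did, but use the adjoint representation
$\delta Y(\tau)=\mathbb{E}[\Gamma_T\,\delta h+\int_\tau^T\Gamma_s\,\delta f_s\,ds\,|\,\mathcal{F}_\tau]$,
where $d\Gamma_s=\Gamma_{s-}\bigl(\alpha_s\,ds+\beta_s\,dW_s+\int_E\gamma_s l(s,e)\,\tilde\mu(ds,de)\bigr)$ and $\Gamma_\tau=1$.

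\begin{itemize}
\item Here $\Gamma>0$ because $\gamma_s l\ge 0$.
\item $\Gamma$ has bounded conditional moments of the orders needed below, given the same kind of integrability of $l$ under $\nu$ that you already take for granted.
\end{itemize}

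Now apply Hölder with exponents $p/(p-1)$ and $p$ to $|\delta Y(\tau)|$ itself. The weight factor $\sup_s\Gamma_s\,(1+\sup_s|X_s|^{p-1}+\sup_s|\bar X_s|^{p-1})$ is needed only in $L^{p/(p-1)}$. After a Cauchy--Schwarz to split off $\Gamma$, this requires moments of $X$ of order $2p\le p^2$. Raising to the $p$-th power turns that factor into $C(1+|\xi|^{p(p-1)}+|\bar\xi|^{p(p-1)})$, so that
$|\delta Y(\tau)|^p\le C(1+|\xi|^{p(p-1)}+|\bar\xi|^{p(p-1)})\,\mathbb{E}[\sup_s|X_s-\bar X_s|^p+\int_\tau^T|u_s-\bar u_s|^p\,ds\,|\,\mathcal{F}_\tau]$.

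To finish, use the forward estimate at level $p$:
$\mathbb{E}[\sup_s|X_s-\bar X_s|^p\,|\,\mathcal{F}_\tau]\le C(|\xi-\bar\xi|^p+\mathbb{E}[\int_\tau^T|u_s-\bar u_s|^p\,ds\,|\,\mathcal{F}_\tau])$.
Prove it exactly as \cref{lem:state_estimates}(ii), with $p$ in place of $p^2$. Deducing it from the $p^2$ version by Jensen would reintroduce the $1/p$ power you are trying to avoid.
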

	
	\subsection{A Priori Estimates for the Value Function}
	
	Taking the essential infimum over $u \in \mathcal{U}[t, T]$ directly transfers these cost estimates to the value function.
	
	\begin{lemma} \label{lem:value_func_prop}
		Let Assumption \ref{assum:coefficients} and Assumption \ref{assum:recursive_cost} hold. For any $p \ge 2$, initial time $\tau \in \mathcal{T}_{[0,T]}$, and states $\xi, \bar{\xi} \in L^{p^2}(\Omega, \mathcal{F}_\tau; \mathbb{R}^n)$, the value function $\mathbb{V}(\tau, \xi) := \operatorname*{ess\,inf}_{u \in \mathcal{U}[\tau,T]} \mathbb{J}(\tau, \xi; u(\cdot))$ satisfies the following estimates $\mathbb{P}\text{-a.s.}$:\
		\begin{enumerate}[label=\textbf{(\roman*)}]
			\item \begin{equation}
				|\mathbb{V}(\tau, \xi)| \le C_p \big( 1 + |\xi|^p \big).
			\end{equation}
			\item \begin{equation}
				|\mathbb{V}(\tau, \xi) - \mathbb{V}(\tau, \bar{\xi})|^p \le \tilde{C}_p \big( 1 + |\xi|^{p(p-1)} + |\bar{\xi}|^{p(p-1)} \big) |\xi - \bar{\xi}|^p.
			\end{equation}
		\end{enumerate}
	\end{lemma}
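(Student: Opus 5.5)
The plan is to transfer the pointwise estimates of \cref{lem:perf_index} to the essential infimum by two elementary facts about $\operatorname*{ess\,inf}$. First, if a family of random variables $\{J^u\}$ satisfies $|J^u| \le M$ a.s. for each $u$, with $M$ independent of $u$, then $|\operatorname*{ess\,inf}_u J^u| \le M$ a.s. Second, if $|J^u - \bar J^u| \le M$ a.s. for each $u$, then $|\operatorname*{ess\,inf}_u J^u - \operatorname*{ess\,inf}_u \bar J^u| \le M$ a.s. The second fact follows because $J^u \le \bar J^u + M$ gives $\operatorname*{ess\,inf} J \le \bar J^u + M$ for every $u$; taking $\operatorname*{ess\,inf}$ over $u$ on the right (noting that $M$ is $\mathcal F_\tau$-measurable and independent of $u$) and then exchanging the roles of $J$ and $\bar J$ yields the claim.

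For (i), I apply \cref{lem:perf_index}(i). For every $u \in \mathcal{U}[\tau,T]$ we have $-C(1+|\xi|^p) \le \mathbb{J}(\tau,\xi;u) \le C(1+|\xi|^p)$ a.s., and the bound does not depend on $u$. The first fact then gives (i). Only countably many null sets enter, since the essential infimum is attained along a countable subfamily.

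For (ii), I apply \cref{lem:perf_index}(ii) with the \emph{same} control $u=\bar u$, so the control term vanishes. This gives
\begin{equation*}
|\mathbb{J}(\tau,\xi;u)-\mathbb{J}(\tau,\bar\xi;u)| \le M := \big[C(1+|\xi|^{p(p-1)}+|\bar\xi|^{p(p-1)})\big]^{1/p}|\xi-\bar\xi|,
\end{equation*}
where $M$ is $\mathcal F_\tau$-measurable and independent of $u$. The second fact then gives $|\mathbb V(\tau,\xi)-\mathbb V(\tau,\bar\xi)| \le M$, and raising both sides to the $p$-th power yields (ii) with $\tilde C_p = C$.

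The only delicate point is that $\mathbb{V}(\tau,\xi)$ and $\mathbb{V}(\tau,\bar\xi)$ are essential infima over the \emph{same} index set $\mathcal{U}[\tau,T]$, so a common control can be used for both. I would also note that the family $\{\mathbb J(\tau,\xi;u)\}_u$ is directed downward: given $u_1,u_2$, the control $u_1\mathbf 1_A+u_2\mathbf 1_{A^c}$ with $A=\{\mathbb J(\tau,\xi;u_1)\le \mathbb J(\tau,\xi;u_2)\}\in\mathcal F_\tau$ is admissible and realizes the minimum. Thus the essential infimum is an a.s. limit of a decreasing sequence, which justifies the countable-null-set bookkeeping above. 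The argument does not actually need this, but it is a useful check.
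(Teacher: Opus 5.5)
Your proposal is correct and follows the paper's route. The paper only remarks that taking the essential infimum over $u\in\mathcal{U}[\tau,T]$ in the estimates of \cref{lem:perf_index} transfers them to $\mathbb{V}$, and your two elementary facts about $\operatorname*{ess\,inf}$, together with the choice $\bar u = u$ in part (ii) so that the control term vanishes, are exactly the details behind that remark. The countable-subfamily and downward-directedness checks are harmless but not needed, since the defining greatest-lower-bound property of the essential infimum already absorbs the per-$u$ null sets.
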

	
	\subsection{Dynamic Programming Principle}
	
	The Dynamic Programming Principle (DPP) is characterized via the backward semigroup. For $(\tau, \xi)$, $\gamma \in \mathcal{T}_{[\tau,T]}$, $u \in \mathcal{U}[\tau, \gamma]$, and $\mathcal{F}_\gamma$-measurable $\eta$, the semigroup is defined as:
	\begin{equation}
		G_{s,\gamma}^{\tau,\xi;u(\cdot)}[\eta] := Y(s), \quad s \in [\tau, \gamma],
	\end{equation}
	where $(X, Y, Z, K)$ is the unique strong solution to the following truncated forward-backward system on $[\tau, \gamma]$:
	\begin{equation}
		\begin{cases}
			dX(s) = b(s,X(s), u(s))ds + \sigma(s,X(s), u(s))dW(s) \\
			\quad \quad \quad + \int_E g(s, e,X(s-), u(s))\tilde{\mu}(ds, de), \\
			-dY(s) = f\left(s,X(s), u(s), Y(s), Z(s), \int_E K(s, e)l(s, e)\nu(de)\right)ds \\
			\quad \quad \quad - Z(s)dW(s) - \int_E K(s, e)\tilde{\mu}(ds, de), \\
			X(\tau) = \xi, \quad Y(\gamma) = \eta.
		\end{cases}
	\end{equation}
	
	The main result of the dynamic programming principle is formulated in the following theorem.
	
	\begin{theorem} \label{thm:dpp}
		Under Assumption \ref{assum:coefficients} and Assumption \ref{assum:recursive_cost}, the value function $\mathbb{V}(\tau, \xi)$ obeys the following dynamic programming principle: for any initial time $\tau \in \mathcal{T}_{[0,T]}$, intermediate stopping time $\gamma \in \mathcal{T}_{[\tau,T]}$, and initial state $\xi \in L^{p^2}(\Omega, \mathcal{F}_\tau; \mathbb{R}^n)$, it holds that
		\begin{equation*}
			\mathbb{V}(\tau, \xi) = \operatorname*{ess\,inf}_{u(\cdot) \in \mathcal{U}[\tau, \gamma]} G_{\tau, \gamma}^{\tau, \xi; u(\cdot)} \Big[ \mathbb{V}\big(\gamma, X^{\tau,\xi;u}(\gamma)\big) \Big], \quad \mathbb{P}\text{-a.s.}
		\end{equation*}
	\end{theorem}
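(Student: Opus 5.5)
We prove the two inequalities separately, following Peng's backward semigroup method. Denote the right-hand side by $\mathbb{W}(\tau,\xi)$. The basic tool is the flow property: by \cref{lem:flow_identity} and uniqueness of the BSDE \eqref{eq:BSDE_with_jumps} on $[\gamma,T]$, for any $u\in\mathcal{U}[\tau,T]$,
\begin{equation*}
\mathbb{J}(\tau,\xi;u)=G^{\tau,\xi;u}_{\tau,T}[h(X(T))]=G^{\tau,\xi;u}_{\tau,\gamma}\big[\mathbb{J}(\gamma,X^{\tau,\xi;u}(\gamma);u|_{[\gamma,T]})\big].
\end{equation*}
A preliminary step is to check that $\mathbb{V}(\gamma,X^{\tau,\xi;u}(\gamma))$ is well defined and lies in $L^p(\Omega,\mathcal{F}_\gamma)$. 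This holds because $X(\gamma)\in L^{p^2}$ by \cref{lem:state_estimates}(i), and the growth bound of \cref{lem:value_func_prop}(i) then applies. We will also use the comparison theorem and the $L^p$ stability estimate for BSDEs with jumps. Comparison relies on the monotonicity of $f$ in $k$ together with $l\ge0$ (\cref{assum:recursive_cost}(iv)). Stability gives $|G_{\tau,\gamma}[\eta_1]-G_{\tau,\gamma}[\eta_2]|^p\le C\,\mathbb{E}[|\eta_1-\eta_2|^p|\mathcal{F}_\tau]$.

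\textbf{Step 1 ($\mathbb{V}\ge\mathbb{W}$).} Fix $u\in\mathcal{U}[\tau,T]$. Since $\mathbb{J}(\gamma,X(\gamma);u)\ge\mathbb{V}(\gamma,X(\gamma))$, comparison in the displayed identity gives
\begin{equation*}
\mathbb{J}(\tau,\xi;u)\ge G^{\tau,\xi;u}_{\tau,\gamma}[\mathbb{V}(\gamma,X(\gamma))]\ge\mathbb{W}(\tau,\xi).
\end{equation*}
Taking the essential infimum over $u$ yields the claim.

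\textbf{Step 2 ($\mathbb{V}\le\mathbb{W}$).} Fix $u\in\mathcal{U}[\tau,\gamma]$ and $\varepsilon>0$. We need an $\varepsilon$-optimal continuation control $v\in\mathcal{U}[\gamma,T]$ with $\mathbb{J}(\gamma,X(\gamma);v)\le\mathbb{V}(\gamma,X(\gamma))+\varepsilon$ a.s. We obtain it in three stages.

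\emph{(a) Downward directedness.} For a fixed $\mathcal{F}_\gamma$-measurable state $\zeta$, the family $\{\mathbb{J}(\gamma,\zeta;v)\}_v$ is closed under pasting: for $A\in\mathcal{F}_\gamma$, the control $v_1\mathbf 1_A+v_2\mathbf 1_{A^c}$ has cost $\mathbb{J}(\cdot;v_1)\mathbf 1_A+\mathbb{J}(\cdot;v_2)\mathbf 1_{A^c}$, by uniqueness of the BSDE. Hence there is a sequence $v_k$ with $\mathbb{J}(\gamma,\zeta;v_k)\downarrow\mathbb{V}(\gamma,\zeta)$ a.s.

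\emph{(b) Discretization.} Pasting over the sets $\{\mathbb{J}(\cdot;v_k)\le\mathbb{V}+\varepsilon\}\setminus\bigcup_{j<k}\{\cdots\}$ gives $\varepsilon$-optimality for states $\zeta$ taking countably many values. For general $\zeta=X(\gamma)$, approximate it by a countably-valued $\zeta_m$ and control the errors as follows.
\begin{itemize}
\item The error in $\mathbb{J}$ is bounded by \cref{lem:perf_index}(ii), uniformly in the control.
\item The error in $\mathbb{V}$ is bounded by \cref{lem:value_func_prop}(ii).
\end{itemize}
Both errors carry polynomial weights in $|\zeta|$, so we localize on $\{|\zeta|\le N\}$ and choose the mesh small relative to $N$.

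\emph{(c) Gluing.} Concatenate $\bar u:=u\mathbf 1_{[\tau,\gamma)}+v\mathbf 1_{[\gamma,T]}\in\mathcal{U}[\tau,T]$. Using the displayed identity, comparison, and stability,
\begin{equation*}
\mathbb{V}(\tau,\xi)\le\mathbb{J}(\tau,\xi;\bar u)\le G^{\tau,\xi;u}_{\tau,\gamma}[\mathbb{V}(\gamma,X(\gamma))]+C\varepsilon.
\end{equation*}
Taking the infimum over $u$ and letting $\varepsilon\to0$ finishes the proof.

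\textbf{Main obstacle.} The difficulty is stage (b): the $\varepsilon$-optimality holds pointwise only approximately, and the polynomial-growth weights, which are unbounded in $|\zeta|$, must be handled. We plan to bound the contribution of the tail set $\{|\zeta|>N\}$ in conditional $L^p$ norm. This uses Hölder's inequality with the $L^{p^2}$ moment bound of \cref{lem:state_estimates}(i), which explains the $p^2$ integrability assumption. The resulting error enters only through the stability estimate, which tolerates $L^p$ rather than a.s. errors, so the a.s. inequality $\mathbb{V}\le\mathbb{W}$ survives after letting first $m\to\infty$ and then $N\to\infty$.
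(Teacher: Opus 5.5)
Your proof is correct and has the same skeleton as the paper's. The ``$\ge$'' step is identical: the flow identity for the backward semigroup, then monotonicity of $G^{\tau,\xi;u}_{\tau,\gamma}$ applied to $\mathbb{J}(\gamma,X(\gamma);u)\ge\mathbb{V}(\gamma,X(\gamma))$. In both proofs the ``$\le$'' step rests on downward directedness at time $\gamma$ and concatenation with $u$. The difference is how the minimizing sequence is used. The paper never builds a single $\varepsilon$-optimal continuation. It takes $Y_n=\mathbb{J}(\gamma,X^u_\gamma;u'_n)\downarrow\mathbb{V}(\gamma,X^u_\gamma)$ and gets $L^p$-convergence by dominated convergence from $\mathbb{V}\le Y_n\le Y_1$. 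It then passes the limit through $G^{\tau,\xi;u}_{\tau,\gamma}$ by BSDE stability, so that $G^{\tau,\xi;u}_{\tau,\gamma}[\mathbb{V}(\gamma,X^u_\gamma)]=\lim_n\mathbb{J}(\tau,\xi;u\oplus u'_n)\ge\mathbb{V}(\tau,\xi)$. You instead paste the $v_k$ along an $\mathcal{F}_\gamma$-partition into one control and use stability only for the constant shift $\varepsilon$. This is equally valid, at the cost of checking the countable pasting identity $\mathbb{J}(\gamma,\zeta;\sum_k v_k\mathbf{1}_{A_k})=\sum_k\mathbb{J}(\gamma,\zeta;v_k)\mathbf{1}_{A_k}$.

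Your stage (b) discretization, and the ``main obstacle'' built on it, is unnecessary here. $\mathbb{V}(\gamma,\zeta)$ is \emph{defined} as $\operatorname*{ess\,inf}_{v\in\mathcal{U}[\gamma,T]}\mathbb{J}(\gamma,\zeta;v)$ at the random state itself. Hence the sets $A_k=\{\mathbb{J}(\gamma,\zeta;v_k)\le\mathbb{V}(\gamma,\zeta)+\varepsilon\}\setminus\bigcup_{j<k}A_j$ are $\mathcal{F}_\gamma$-measurable for arbitrary $\zeta=X(\gamma)$. The pasting then gives $\mathbb{J}(\gamma,X(\gamma);v)\le\mathbb{V}(\gamma,X(\gamma))+\varepsilon$ a.s. directly, and nothing uses that $\zeta$ is countably valued. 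So \cref{lem:perf_index}(ii), \cref{lem:value_func_prop}(ii) and the tail estimate can be dropped. Even if you keep the discretization, a lattice approximation of mesh $1/m$ is uniform, so no localization on $\{|\zeta|\le N\}$ is needed. State discretization with continuity estimates is the kind of argument needed for the deterministic-field version, \cref{thm:dpp_value_function}, where $\mathbb{V}(\gamma,\zeta)$ must be identified with $V(\gamma,x)|_{x=\zeta}$ as in \cref{lem:deterministic_reduction}.

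One small technical fix: with jumps, glue as $u$ on $[\tau,\gamma]$ and $v$ on $(\gamma,T]$, not $u\mathbf{1}_{[\tau,\gamma)}+v\mathbf{1}_{[\gamma,T]}$. Otherwise a jump at exactly $\gamma$ is driven by $v(\gamma)$, so $X^{\tau,\xi;\bar u}(\gamma)$ may differ from $X^{\tau,\xi;u}(\gamma)$, and the concatenated control need not be predictable.
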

	
	We define the family of admissible costs at initial data $(\tau, \xi)$ as:
	\begin{equation*}
		\Lambda(\tau, \xi) := \big\{ \mathbb{J}(\tau, \xi; u(\cdot)) \mid u(\cdot) \in \mathcal{U}[\tau, T] \big\}.
	\end{equation*}

	\begin{lemma} \label{lem:cost_family}
		For any initial data $(\tau, \xi) \in \mathcal{T}_{[0,T]} \times L^{p^2}(\Omega, \mathcal{F}_\tau; \mathbb{R}^n)$, the family $\Lambda(\tau, \xi)$ satisfies the following $\mathbb{P}\text{-a.s.}$:\
		\begin{enumerate}[label=\textbf{(\roman*)}]
			\item $\Lambda(\tau, \xi)$ is downward directed: for any $u_1, u_2 \in \mathcal{U}[\tau, T]$, there exists $u_3 \in \mathcal{U}[\tau, T]$ such that
			\begin{equation*}
				\mathbb{J}(\tau, \xi; u_3) = \min\{\mathbb{J}(\tau, \xi; u_1), \mathbb{J}(\tau, \xi; u_2)\}.
			\end{equation*}
			\item There exists a sequence of admissible controls $\{u_k\}_{k \ge 1} \subset \mathcal{U}[\tau, T]$ such that
			\begin{equation*}
				\lim_{k \to \infty} \downarrow \mathbb{J}(\tau, \xi; u_k(\cdot)) = \mathbb{V}(\tau, \xi), \quad \mathbb{P}\text{-a.s.}
			\end{equation*}
			\item For any sub-$\sigma$-algebra $\mathcal{G} \subset \mathcal{F}_\tau$, the essential infimum and the conditional expectation commute:
			\begin{equation*}
				\mathbb{E} \big[ \mathbb{V}(\tau, \xi) \big| \mathcal{G} \big] = \operatorname*{ess\,inf}_{u \in \mathcal{U}[\tau,T]} \mathbb{E} \big[ \mathbb{J}(\tau, \xi; u) \big| \mathcal{G} \big].
			\end{equation*}
		\end{enumerate}
	\end{lemma}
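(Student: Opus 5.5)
Part (i) rests on a pasting construction. Given $u_1,u_2\in\mathcal{U}[\tau,T]$, set $A:=\{\mathbb{J}(\tau,\xi;u_1)\le \mathbb{J}(\tau,\xi;u_2)\}\in\mathcal{F}_\tau$ and define $u_3:=u_1\mathbf{1}_A+u_2\mathbf{1}_{A^c}$ on $[\tau,T]$. Since $A\in\mathcal{F}_\tau$ and the time interval starts at $\tau$, $u_3$ is progressively measurable and $U$-valued, so $u_3\in\mathcal{U}[\tau,T]$. The main point is the locality of the forward--backward system with respect to $\mathcal{F}_\tau$-events:
\[
X^{u_3}_{\tau,\xi}=\mathbf{1}_AX^{u_1}_{\tau,\xi}+\mathbf{1}_{A^c}X^{u_2}_{\tau,\xi},
\qquad
(Y,Z,K)^{u_3}=\mathbf{1}_A(Y,Z,K)^{u_1}+\mathbf{1}_{A^c}(Y,Z,K)^{u_2}
\]
on $[\tau,T]$. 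I will check this directly: multiply the SDE for $u_i$ by the corresponding indicator and add. Stochastic integrals against $W$ and $\tilde\mu$ over $[\tau,s]$ commute with multiplication by $\mathcal{F}_\tau$-measurable indicators. The coefficients satisfy $\varphi(s,\mathbf{1}_Ax_1+\mathbf{1}_{A^c}x_2,\dots)=\mathbf{1}_A\varphi(s,x_1,\dots)+\mathbf{1}_{A^c}\varphi(s,x_2,\dots)$, and this also holds for $f$ with the nonlocal argument $\int_E K l\,d\nu$. The terminal condition $h(X(T))$ splits the same way. The pasted triple therefore solves the system for $u_3$, and uniqueness (classical SDE theory and the BSDE well-posedness stated before \cref{def:stochastic_recursive_cost}) identifies it with the solution for $u_3$. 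Evaluating at $s=\tau$ gives $\mathbb{J}(\tau,\xi;u_3)=\min\{\mathbb{J}(\tau,\xi;u_1),\mathbb{J}(\tau,\xi;u_2)\}$. I expect this to be the only step needing care: the integrals start at the stopping time $\tau$, and one must justify pulling the indicator inside them. This is standard because $\mathbf{1}_A\mathbf{1}_{[\tau,T]}$ is a predictable process.

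Part (ii) follows from (i) together with the standard property of the essential infimum. For any family of random variables there is a countable subfamily $\{\mathbb{J}(\tau,\xi;v_k)\}$ whose pointwise infimum equals the essential infimum. I set $u_1:=v_1$ and apply (i) inductively to $u_{k-1}$ and $v_k$ to obtain $u_k$ with $\mathbb{J}(\tau,\xi;u_k)=\min_{j\le k}\mathbb{J}(\tau,\xi;v_j)$. This sequence is nonincreasing and converges a.s. to $\inf_k\mathbb{J}(\tau,\xi;v_k)=\mathbb{V}(\tau,\xi)$.

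For part (iii), the inequality "$\le$" holds because $\mathbb{V}(\tau,\xi)\le\mathbb{J}(\tau,\xi;u)$ for every $u$, and conditional expectation is monotone. For "$\ge$", I take the sequence $u_k$ from (ii). \Cref{lem:perf_index}(i) gives the bound $|\mathbb{J}(\tau,\xi;u_k)|\le C(1+|\xi|^p)$, which is integrable since $\xi\in L^{p^2}$. Monotone (or dominated) convergence for conditional expectations then yields $\mathbb{E}[\mathbb{J}(\tau,\xi;u_k)\mid\mathcal{G}]\downarrow\mathbb{E}[\mathbb{V}(\tau,\xi)\mid\mathcal{G}]$ a.s. Each term on the left dominates $\operatorname*{ess\,inf}_u\mathbb{E}[\mathbb{J}(\tau,\xi;u)\mid\mathcal{G}]$, so passing to the limit gives the reverse inequality, and hence equality.
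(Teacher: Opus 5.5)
Your proposal is correct and follows the standard argument the paper relies on: the paper omits this proof and defers to \cite{Qiu2018, MengDongShenTang2023}, and its proof of \cref{thm:dpp} invokes exactly the same two ingredients, namely stability of $\mathcal{U}$ under pasting on $\mathcal{F}_\tau$-events and the countable-subfamily property of the essential infimum. One small correction: when you pull $\mathbf{1}_A$ inside the stochastic integrals, the process to invoke is $\mathbf{1}_A\mathbf{1}_{(\tau,T]}$, which is left-continuous and adapted and hence predictable; by contrast, $\mathbf{1}_A\mathbf{1}_{[\tau,T]}$ is in general only optional (for instance when $\tau$ is a jump time of $\eta$), but since $\int_\tau^s$ means integration over $(\tau,s]$, this changes nothing in your argument.
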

	
	\begin{proof}[Proof of \cref{thm:dpp}]
		\textbf{Step 1: The Lower Bound ($\ge$).}
		By the definition of $\mathbb{V}(\tau, \xi)$ and the flow property of the backward semigroup, we have $\mathbb{P}\text{-a.s.}$
		\begin{align*}
			\mathbb{V}(\tau, \xi) &= \operatorname*{ess\,inf}_{u(\cdot)\in\mathcal{U}[\tau,T]} G_{\tau,T}^{\tau,\xi;u(\cdot)} \big[ h(X_{\tau,\xi}^u(T)) \big] \\
			&= \operatorname*{ess\,inf}_{u(\cdot)\in\mathcal{U}[\tau,T]} G_{\tau,\gamma}^{\tau,\xi;u(\cdot)} \big[ \mathbb{J}(\gamma, X_{\tau,\xi}^u(\gamma); u(\cdot)) \big].
		\end{align*}
		Since $\mathbb{J}(\gamma, X_{\tau,\xi}^u(\gamma); u(\cdot)) \ge \mathbb{V}(\gamma, X_{\tau,\xi}^u(\gamma))$ holds $\mathbb{P}$-a.s. by definition, and the semigroup operator $G$ is monotone with respect to terminal conditions, we obtain:
		\begin{equation}
			\mathbb{V}(\tau, \xi) \ge \operatorname*{ess\,inf}_{u(\cdot)\in\mathcal{U}[\tau,\gamma]} G_{\tau,\gamma}^{\tau,\xi;u(\cdot)} \big[ \mathbb{V}(\gamma, X_{\tau,\xi}^u(\gamma)) \big], \quad \mathbb{P}\text{-a.s.}
		\end{equation}
		
		\textbf{Step 2: The Upper Bound ($\le$).} Fix an arbitrary $u \in \mathcal{U}[\tau,\gamma]$. Consider the family of random variables evaluated at time $\gamma$:
		\begin{equation*}
			\Lambda := \bigl\{ \mathbb{J}(\gamma,X_{\gamma}^{u};u') : u'\in\mathcal{U}[\gamma,T] \bigr\}.
		\end{equation*}
		
		Since $\mathcal{U}$ is stable under concatenation, $\Lambda$ is directed downward. Standard properties of the essential infimum guarantee a sequence $\{u_n'\} \subset \mathcal{U}[\gamma,T]$ whose costs $Y_n := \mathbb{J}(\gamma,X_{\gamma}^{u};u_n')$ monotonically decrease to the value function:
		\begin{equation*}
			Y_n \downarrow \operatorname*{ess\,inf}_{u'\in\mathcal{U}[\gamma,T]} \mathbb{J}(\gamma,X_{\gamma}^{u};u') = \mathbb{V}(\gamma,X_{\gamma}^{u}), \quad \mathbb{P}\text{-a.s. as } n \to \infty.
		\end{equation*}
		
		Applying $G_{\tau,\gamma}^{\tau,\xi;u}[\cdot]$ to this sequence, standard BSDE stability and the Dominated Convergence Theorem (since $Y_n \ge \mathbb{V} \in L^p$) guarantee that the monotonic limit is preserved:
		\begin{equation*}
			G_{\tau,\gamma}^{\tau,\xi;u}[Y_n] \downarrow G_{\tau,\gamma}^{\tau,\xi;u}\bigl[\mathbb{V}(\gamma,X_{\gamma}^{u})\bigr], \quad \mathbb{P}\text{-a.s.}
		\end{equation*}
		
		By the flow property of the backward semigroup and the definition of $\mathbb{V}$, taking the limit as $n \to \infty$ yields:
		\begin{equation*}
			G_{\tau,\gamma}^{\tau,\xi;u}\bigl[\mathbb{V}(\gamma,X_{\gamma}^{u})\bigr] = \lim_{n \to \infty} G_{\tau,\gamma}^{\tau,\xi;u}[Y_n] = \lim_{n \to \infty} \mathbb{J}(\tau,\xi;u\oplus u_n') \ge \mathbb{V}(\tau,\xi).
		\end{equation*}
		Taking the essential infimum over arbitrary $u \in \mathcal{U}[\tau,\gamma]$ directly gives the desired bound:
		\begin{equation*}
			\operatorname*{ess\,inf}_{u\in\mathcal{U}[\tau,\gamma]} G_{\tau,\gamma}^{\tau,\xi;u}\bigl[\mathbb{V}(\gamma,X_{\gamma}^{u})\bigr] \ge \mathbb{V}(\tau,\xi).
		\end{equation*}
		
		Combining the two inequalities, we conclude
		\[
		\mathbb{V}(\tau,\xi)
		=\operatorname*{essinf}_{u\in\mathcal{U}[\tau,\gamma]}
		G_{\tau,\gamma}^{\tau,\xi;u}\bigl[\mathbb{V}(\gamma,X_{\gamma}^{u})\bigr],\quad\mathbb{P}\text{-a.s.}
		\]
		The proof is complete.
	\end{proof}
	
	\begin{lemma} \label{lem:deterministic_reduction}
		Under Assumption \ref{assum:coefficients} and Assumption \ref{assum:recursive_cost}, for $t \in [0, T]$ and $\xi \in L^{p^2}(\mathcal{F}_t)$, the random cost and value functions coincide with their deterministic evaluations: $\mathbb{J}(t, \xi; u) = J(t, x; u) \big|_{x=\xi}$ and $\mathbb{V}(t, \xi) = V(t, x) \big|_{x=\xi}$, $\mathbb{P}$-a.s.
	\end{lemma}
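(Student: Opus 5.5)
We first check the claim for simple random variables $\xi=\sum_{i=1}^N x_i\mathbf 1_{A_i}$, with $\{A_i\}\subset\mathcal F_t$ a partition of $\Omega$ and $x_i\in\mathbb{R}^n$, and then pass to general $\xi$ by approximation using the Lipschitz-type estimates of \cref{lem:perf_index} and \cref{lem:value_func_prop}. Here $J(t,x;u):=\mathbb{J}(t,x;u)$ denotes the random field obtained by fixing a deterministic initial state $x$, and $V(t,x):=\mathbb{V}(t,x)$. We first fix a jointly measurable version of $J$ in $(\omega,x)$, continuous in $x$. This is possible by Kolmogorov's criterion applied to \cref{lem:perf_index}(ii), since $p$-th moment bounds with exponent $p\ge2$ give a continuous modification at least for $n<p$. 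For general $n$ one can avoid Kolmogorov altogether and define $J(t,\xi;u)$ directly through the simple-function limit below. The same applies to $V$ via \cref{lem:value_func_prop}(ii).

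\textbf{Step 1 (simple $\xi$, cost).} By pathwise uniqueness of the SDE and of the BSDE with jumps, $X_{t,\xi}^u=\sum_i \mathbf 1_{A_i}X_{t,x_i}^u$ on $[t,T]$, since $\mathbf 1_{A_i}$ is $\mathcal F_t$-measurable and all coefficients act pointwise in $\omega$. Similarly $(Y,Z,K)_{t,\xi}^u=\sum_i\mathbf 1_{A_i}(Y,Z,K)_{t,x_i}^u$, because $f(\cdot,\sum_i\mathbf 1_{A_i}x_i,\dots)=\sum_i\mathbf 1_{A_i}f(\cdot,x_i,\dots)$, and likewise for $h$. Evaluating at time $t$ gives $\mathbb{J}(t,\xi;u)=\sum_i\mathbf 1_{A_i}J(t,x_i;u)=J(t,x;u)|_{x=\xi}$.

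\textbf{Step 2 (simple $\xi$, value).} For any $u$ we have $\mathbb{J}(t,\xi;u)=\sum_i\mathbf 1_{A_i}J(t,x_i;u)\ge\sum_i\mathbf 1_{A_i}V(t,x_i)$, hence $\mathbb{V}(t,\xi)\ge V(t,x)|_{x=\xi}$. For the reverse inequality, \cref{lem:cost_family}(ii) gives, for each $i$, a sequence $u^i_k$ with $J(t,x_i;u^i_k)\downarrow V(t,x_i)$. The pasted control $u_k:=\sum_i\mathbf 1_{A_i}u^i_k$ is admissible, being progressively measurable because $A_i\in\mathcal F_t$. By Step 1 applied locally on each $A_i$, $\mathbb{J}(t,\xi;u_k)=\sum_i\mathbf 1_{A_i}J(t,x_i;u^i_k)\downarrow\sum_i\mathbf 1_{A_i}V(t,x_i)$. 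This yields equality.

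\textbf{Step 3 (general $\xi$).} Choose simple $\xi_m\to\xi$ a.s. and in $L^{p^2}$, with $|\xi_m|\le|\xi|+1$, for example by dyadic truncation. By \cref{lem:perf_index}(ii), $|\mathbb{J}(t,\xi;u)-\mathbb{J}(t,\xi_m;u)|^p\le C(1+|\xi|^{p(p-1)}+|\xi_m|^{p(p-1)})|\xi-\xi_m|^p\to0$ a.s. Continuity of $x\mapsto J(t,x;u)$ gives $J(t,\xi_m;u)\to J(t,\xi;u)$, and Step 1 identifies the two sequences. The same argument with \cref{lem:value_func_prop}(ii) and Step 2 handles $\mathbb{V}$. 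The main obstacle is the measurability and continuity of the version of $x\mapsto J(t,x;u)$ needed for the composition $J(t,\xi;u)$ to make sense. Fixing the version through the a.s. limit along simple approximations, and checking that this limit does not depend on the chosen sequence (again by the estimates), resolves it.
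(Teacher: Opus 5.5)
Your argument is correct. It is the standard Peng-type argument: pathwise uniqueness of the forward SDE and the BSDE for simple $\mathcal{F}_t$-measurable $\xi$, pasting of controls on the partition $\{A_i\}$ to obtain the value-function identity, and passage to general $\xi$ through the a.s.\ estimates of \cref{lem:perf_index}(ii) and \cref{lem:value_func_prop}(ii). The paper omits this proof and defers to \cite{Qiu2018, MengDongShenTang2023}, so this is essentially the route it relies on.

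One small simplification concerns the choice of version. Those estimates hold $\mathbb{P}$-a.s.\ with deterministic constants, so $x\mapsto J(t,x;u)$ and $x\mapsto V(t,x)$ are a.s.\ locally Lipschitz on $\mathbb{Q}^n$. They therefore extend to continuous, jointly measurable modifications in every dimension $n$. This makes Kolmogorov's criterion and the restriction $n<p$ unnecessary. It also removes the need to define the composition through limits, which would make the general-$\xi$ statement essentially a definition.
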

	
	\begin{theorem} \label{thm:dpp_value_function}
		Let Assumption \ref{assum:coefficients} and Assumption \ref{assum:recursive_cost} hold. For any $0 \le t \le t + \delta \le T$, the deterministic value function satisfies the dynamic programming principle:
		\begin{equation*}
			V(t, x) = \operatorname*{ess\,inf}_{u \in \mathcal{U}[t, t+\delta]} G_{t, t+\delta}^{t, x; u} \Big[ V\big(t + \delta, X^{t, x; u}(t + \delta)\big) \Big], \quad \mathbb{P}\text{-a.s.}
		\end{equation*}
	\end{theorem}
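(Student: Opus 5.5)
The plan is to obtain the statement as a direct specialization of \cref{thm:dpp}, combined with \cref{lem:deterministic_reduction} to pass from the random-variable formulation $\mathbb{V}$ to the field $V$. First, take the deterministic stopping times $\tau \equiv t$ and $\gamma \equiv t+\delta$, which belong to $\mathcal{T}_{[0,T]}$ and satisfy $\gamma \in \mathcal{T}_{[\tau,T]}$. Take the constant initial state $\xi \equiv x$, which lies trivially in $L^{p^2}(\Omega,\mathcal{F}_t;\mathbb{R}^n)$. \Cref{thm:dpp} then yields
\begin{equation*}
\mathbb{V}(t,x) = \operatorname*{ess\,inf}_{u \in \mathcal{U}[t,t+\delta]} G_{t,t+\delta}^{t,x;u}\Big[\mathbb{V}\big(t+\delta, X^{t,x;u}(t+\delta)\big)\Big], \quad \mathbb{P}\text{-a.s.}
\end{equation*}
By \cref{lem:deterministic_reduction} applied with the constant $\xi=x$, the left-hand side equals $V(t,x)$ almost surely.

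Second, I identify the terminal condition inside the semigroup. For each fixed $u$, the random state $\xi_u := X^{t,x;u}(t+\delta)$ is $\mathcal{F}_{t+\delta}$-measurable. By \cref{lem:state_estimates}(i) it lies in $L^{p^2}(\Omega,\mathcal{F}_{t+\delta};\mathbb{R}^n)$. So \cref{lem:deterministic_reduction} at time $t+\delta$ gives $\mathbb{V}(t+\delta,\xi_u) = V(t+\delta,\cdot)|_{x=\xi_u}$, $\mathbb{P}$-a.s.

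For this substitution to make sense, $V(t+\delta,\cdot)|_{x=\xi_u}$ must be a well-defined random variable, independent of the choice of version. I would use \cref{lem:value_func_prop}(ii) with deterministic states to show that $x\mapsto V(t+\delta,x)$ admits a continuous modification, via locally Lipschitz bounds on a countable dense set. Composition with $\xi_u$ is then measurable. \Cref{lem:value_func_prop}(i) places it in $L^p$, so it is an admissible terminal value for the BSDE defining $G$.

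Third, since the two terminal conditions agree almost surely, uniqueness of the BSDE solution on $[t,t+\delta]$ gives equality of the semigroup values for each $u$. Since this holds for every $u$, the two essential infima over $\mathcal{U}[t,t+\delta]$ coincide almost surely. Combining the three steps gives the claim.

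The main obstacle is the second step, namely justifying the substitution $\mathbb{V}(t+\delta,\xi_u)=V(t+\delta,\xi_u)$. This is where the random field $V$ must be shown to have a jointly measurable, continuous-in-$x$ version. I would handle it as follows: prove the identity first for simple $\mathcal{F}_{t+\delta}$-measurable $\xi=\sum_i x_i 1_{A_i}$, using the locality $1_{A}\mathbb{V}(\tau,\xi)=1_A\mathbb{V}(\tau,\xi 1_A + x1_{A^c})$, which follows from the concatenation stability used in \cref{lem:cost_family}. Then pass to the limit using the continuity estimate of \cref{lem:value_func_prop}(ii) together with the $L^{p^2}$ moment bounds.
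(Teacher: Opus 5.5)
Your proposal is correct and follows the paper's route: the paper gives no separate proof of this theorem. It places the result directly after \cref{thm:dpp} and \cref{lem:deterministic_reduction}, so that it is obtained exactly as you do, by taking $\tau\equiv t$, $\gamma\equiv t+\delta$, $\xi\equiv x$ in \cref{thm:dpp} and using \cref{lem:deterministic_reduction} to replace $\mathbb{V}$ by $V$ at times $t$ and $t+\delta$. Your additional justification of the substitution $\mathbb{V}(t+\delta,X^{t,x;u}(t+\delta))=V(t+\delta,\cdot)|_{x=X^{t,x;u}(t+\delta)}$ (continuous modification via \cref{lem:value_func_prop}(ii), then simple random variables, then a limit) is a sketch of the content of \cref{lem:deterministic_reduction}, which the paper states without proof.
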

	
	\section{Stochastic HJB Equation with Jumps}
	
	Following the framework established in \cite{MengDongShenTang2023}, and assuming sufficient smoothness of the value field $V(t, x)$, the dynamic programming principle formally implies that $V(t, x)$ satisfies a backward stochastic partial integro-differential equation (BSHJB). We first define the generalized Hamiltonian $H: [0, T] \times \Omega \times \mathbb{R}^n \times U \times \mathbb{R} \times \mathbb{R}^n \times \mathbb{R}^d \times \mathbb{R}^{n \times d} \times \mathbb{R}^{n \times n} \times \mathbb{R} \to \mathbb{R}$. In the definition below, $y$ corresponds to $V(t,x)$, $p$ to $DV(t,x)$, $q$ to $Z(t,x)$, $Q$ to $DZ(t,x)$, $A$ to $D^2V(t,x)$, and $k$ to the integral term $\int_E K(t,e)l(t,e)\nu(de)$.
	\begin{equation} \label{eq:Hamiltonian}
		\begin{aligned}
			H(t, x, u, y, p, q, Q, A, k) &:= f\big(t, x, u, y, \sigma^\top(t, x, u)p + q, k\big) + \langle p, b(t, x, u)\rangle \\
			&\quad + \operatorname{Tr}\big(Q\sigma^\top(t, x, u)\big) + \frac{1}{2}\operatorname{Tr}\big(A\sigma\sigma^\top(t, x, u)\big).
		\end{aligned}
	\end{equation}
	
	For a predictable triplet of random fields $(V, Z, K)$, the fully nonlinear BSHJB equation takes the following differential form on $[0,T) \times \mathbb{R}^n$:
	\begin{equation} \label{eq:BSHJB_full}
		\begin{cases}
			-dV(t, x) = \operatorname*{ess\,inf}_{u\in U} \Bigg\{ H\Big(t, x, u, V, DV, Z, DZ, D^2V, L(t, x, u, V, K)\Big) \\
			\quad \quad \quad \quad \quad + \int_E \Big[ I_V(t, e, x, u) - \langle g(t, e, x, u), DV(t, x) \rangle + I_K(t, e, x, u) \Big] \nu(de) \Bigg\} dt \\
			\quad \quad \quad \quad \quad - Z(t, x)dW(t) - \int_E K(t, e, x)\tilde{\mu}(dt, de), \\
			V(T, x) = h(x),
		\end{cases}
	\end{equation}
	where the non-local difference operators $L$ and $I$ are defined by:
	\begin{align*}
		L(t, x, u, \phi, \psi) &:= \int_E \Big( I_\phi(t, e, x, u) + \psi\big(t, e, x + g(t, e, x, u)\big) \Big) l(t, e)\nu(de), \\
		I_\phi(t, e, x, u) &:= \phi\big(t, x + g(t, e, x, u)\big) - \phi(t, x).
	\end{align*}
	
	To compactify the notation, we encapsulate the essential infimum and all predictable characteristics into a global non-linear drift operator $F$:
	\begin{equation} \label{eq:drift_F}
		\begin{aligned}
			F(t, x, V, Z, K) &:= \operatorname*{ess\,inf}_{u\in U} \Bigg\{ H\Big(t, x, u, V, DV, Z, DZ, D^2V, L(t, x, u, V, K)\Big) \\
			&\quad \quad \quad + \int_E \Big[ I_V(t, e, x, u) - \langle g(t, e, x, u), DV \rangle + I_K(t, e, x, u) \Big] \nu(de) \Bigg\}.
		\end{aligned}
	\end{equation}
	
	The BSHJB equation can thus be compactly written as:
	\begin{equation} \label{eq:BSHJB_compact}
		\begin{cases}
			-dV(t, x) = F(t, x, V, Z, K)dt - Z(t, x)dW(t) - \int_E K(t, e, x)\tilde{\mu}(dt, de), \\
			V(T, x) = h(x).
		\end{cases}
	\end{equation}
	
	\begin{definition} \label{def:classical_solution}
		Let $p \ge 2$ be the polynomial growth exponent from Assumption \ref{assum:recursive_cost}. A triplet of random fields $(V, Z, K)$ is called a predictable classical solution to the BSHJB equation \cref{eq:BSHJB_compact} if it satisfies the following:
		\begin{enumerate}[label=\textbf{(\roman*)}]
			\item \textit{For each $x \in \mathbb{R}^n$, $V(\cdot, x)$ is an $\mathbb{F}$-adapted c\`adl\`ag process, $Z(\cdot, x)$ is predictable, and $K(\cdot, \cdot, x)$ is $\mathcal{P} \otimes \mathcal{B}(E)$-measurable, where $\mathcal{P}$ denotes the predictable $\sigma$-algebra.}
			\item \textit{For $dt \otimes d\mathbb{P}$-a.e. $(t, \omega)$, $V(t, \cdot, \omega) \in C^2(\mathbb{R}^n)$ and $Z(t, \cdot, \omega) \in C^1(\mathbb{R}^n)$. Furthermore, for $dt \otimes d\mathbb{P} \otimes \nu(de)$-a.e. $(t, \omega, e)$, $K(t, e, \cdot, \omega) \in C(\mathbb{R}^n)$.}
			\item \textit{The triplet $(V, Z, K)$ satisfies \cref{eq:BSHJB_compact} for all $(t, x) \in [0, T] \times \mathbb{R}^n$, $\mathbb{P}\text{-a.s.}$}
		\end{enumerate}
	\end{definition}
	
	\section{Stochastic Viscosity Solutions and Existence}
	
	Since the global spatial $C^2$ smoothness required for classical solutions is generally unattainable in fully nonlinear stochastic optimal control, this section develops the framework of stochastic viscosity solutions tailored for our $p$-th order recursive system, building upon the foundational concepts established in \cite{Qiu2018, QiuWei2019}.
	
	\subsection{The Space of Test Functions}
	
	\begin{definition} \label{def:test_space}
		We denote by $\mathcal{T}^p([0, T] \times \mathbb{R}^n)$ the space of all random fields $\varphi: [0, T] \times \Omega \times \mathbb{R}^n \to \mathbb{R}$ satisfying the following conditions:
		\begin{enumerate}[label=\textbf{(\roman*)}]
			\item  For each $x \in \mathbb{R}^n$, $\varphi(\cdot, x)$ is an $\mathbb{F}$-adapted c\`adl\`ag semimartingale admitting the backward representation:
			\begin{equation}
				\begin{aligned}
					\varphi(t, x) &= \varphi(T, x) + \int_t^T \alpha(s, x)ds - \int_t^T \beta(s, x)dW(s) \\
					&\quad - \int_t^T \int_E \gamma(s, e, x)\tilde{\mu}(ds, de),
				\end{aligned}
			\end{equation}
			where $\alpha, \beta$, and $\gamma$ are appropriately integrable predictable processes.
			\item  For $dt \otimes d\mathbb{P}$-a.e. $(t, \omega)$, we have $\varphi(t, \cdot) \in C^2(\mathbb{R}^n)$, $\alpha(t, \cdot) \in C^1(\mathbb{R}^n)$, and $\beta(t, \cdot) \in C^1(\mathbb{R}^n)$. Furthermore, for $dt \otimes d\mathbb{P} \otimes \nu(de)$-a.e. $(t, \omega, e)$, $\gamma(t, e, \cdot) \in C(\mathbb{R}^n)$.
			\item  There exists a constant $C > 0$ such that for $dt \otimes d\mathbb{P}$-a.e. $(t, \omega)$ and all $x \in \mathbb{R}^n$:
			\begin{align*}
				|\varphi| + |\alpha| + |\beta| + \|\gamma\|_\nu &\le C\big(1 + |x|^p\big), \\
				|D\varphi| + |D\alpha| + |D\beta| &\le C\big(1 + |x|^{p-1}\big), \\
				|D^2\varphi| &\le C\big(1 + |x|^{p-2}\big),
			\end{align*}
			where $\|\cdot\|_\nu$ denotes the norm in $L^2(E, \mathcal{B}(E), \nu)$.
		\end{enumerate}
	\end{definition}
	
	\subsection{Stochastic Tangency Spaces}
	
	We now define the sets of test functions that act as stochastic super- and sub-tangents for the value function. For any stopping time $t \in \mathcal{T}_{[0,T]}$, let $\mathcal{T}_{[t, T]}$ denote the set of all stopping times valued in $[t, T]$, and define the strict subset $\mathcal{T}_{[t, T]}^+ := \{ \tau \in \mathcal{T}_{[t, T]} \mid \tau > t \text{ a.s.} \}$. For any $\tau \in \mathcal{T}_{[0,T]}$ and event $A \in \mathcal{F}_\tau$, we denote by $L^{p^2}(A, \mathcal{F}_\tau; \mathbb{R}^n)$ the space of $\mathbb{R}^n$-valued, $\mathcal{F}_\tau$-measurable random variables with finite $p^2$-th moments over $A$. For any test function $\varphi \in \mathcal{T}^p$ with the semimartingale decomposition given in \cref{def:test_space}, we denote its predictable drift rate $\alpha(s, x)$ by the shorthand notation $-d_s\varphi(s, x)$.
	
	\begin{definition} \label{def:tangency_spaces}
		For a given random field $V \in \mathbb{S}^p(\mathbb{R})$, each touching time $\tau \in \mathcal{T}_0$, event $\Omega_\tau \in \mathcal{F}_\tau$ with $\mathbb{P}(\Omega_\tau) > 0$, and spatial random variable $\xi \in L^{p^2}(\Omega_\tau, \mathcal{F}_\tau; \mathbb{R}^n)$, we define the super-tangent test function space $\mathcal{G}^+_V(\tau, \xi; \Omega_\tau)$ as:
		\begin{equation*}
			\mathcal{G}^+_V(\tau, \xi; \Omega_\tau) := \left\{
			\begin{aligned}
				\varphi \in \mathcal{T}^p : \;& (\varphi - V)(\tau, \xi)\mathbf{1}_{\Omega_\tau} = 0 \\
				&= \operatorname*{ess\,inf}_{\bar{\tau} \in \mathcal{T}_\tau} \mathbb{E}\left[ \inf_{y \in \mathbb{R}^n} (\varphi - V)(\bar{\tau} \wedge \hat{\tau}, y) \;\Big|\; \mathcal{F}_\tau \right] \mathbf{1}_{\Omega_\tau}, \quad \mathbb{P}\text{-a.s.} \\
				&\text{for some } \hat{\tau} \in \mathcal{T}^+_\tau
			\end{aligned}
			\right\}.
		\end{equation*}
		
		Symmetrically, the sub-tangent test function space $\mathcal{G}^-_V(\tau, \xi; \Omega_\tau)$ is defined as:
		\begin{equation*}
			\mathcal{G}^-_V(\tau, \xi; \Omega_\tau) := \left\{
			\begin{aligned}
				\varphi \in \mathcal{T}^p : \;& (\varphi - V)(\tau, \xi)\mathbf{1}_{\Omega_\tau} = 0 \\
				&= \operatorname*{ess\,sup}_{\bar{\tau} \in \mathcal{T}_\tau} \mathbb{E}\left[ \sup_{y \in \mathbb{R}^n} (\varphi - V)(\bar{\tau} \wedge \hat{\tau}, y) \;\Big|\; \mathcal{F}_\tau \right] \mathbf{1}_{\Omega_\tau}, \quad \mathbb{P}\text{-a.s.} \\
				&\text{for some } \hat{\tau} \in \mathcal{T}^+_\tau
			\end{aligned}
			\right\}.
		\end{equation*}
		
		Note that if either $\mathcal{G}^+_V(\tau, \xi; \Omega_\tau)$ or $\mathcal{G}^-_V(\tau, \xi; \Omega_\tau)$ is non-empty, it intrinsically implies $0 \le \tau < T$ on the event $\Omega_\tau$.
	\end{definition}
	
	\begin{remark}
		Due to the non-local nature of the jump measure $\nu(de)$, the tangency conditions are formulated globally on $\mathbb{R}^n$ rather than locally. This global extremum ensures that the integro-differential operator evaluating the test function remains well-defined.
	\end{remark}
	
	\subsection{Definition of Stochastic Viscosity Solutions}
	
	For any test function $\varphi \in \mathcal{T}^p$, we define the localized drift operator $F^\varphi(t, x)$ as the operator $F$ evaluated by substituting the spatial derivatives $(DV, D^2V)$ and the martingale characteristics $(Z, K)$ with $(D\varphi, D^2\varphi)$ and $(\beta, \gamma)$ of the test function, respectively. Explicitly,
	\begin{equation} \label{eq:F_varphi}
		\begin{aligned}
			F^\varphi(t,x) &:= \operatorname*{ess\,inf}_{u\in U} \Bigg\{ H\big(t,x,u,\varphi, D\varphi, \beta, D\beta, D^2\varphi, L(t,x,u,\varphi,\gamma)\big) \\
			&\quad + \int_E \Big[ I_\varphi(t,e,x,u) - \langle g, D\varphi\rangle + I_\gamma(t,e,x,u) \Big]\nu(de) \Bigg\}.
		\end{aligned}
	\end{equation}
	
	To avoid ambiguity in the limiting procedures involving the conditional expectation, we adopt the following equivalent formulation based on sequences.
	
	\begin{definition} \label{def:viscosity_solution}
		We say a random field $V \in \mathcal{S}^p(\mathbb{R})$ is a stochastic viscosity subsolution (resp., supersolution) to the BSHJB equation \cref{eq:BSHJB_compact} if $V(T,x) \le$ (resp., $\ge$) $h(x)$ for all $x \in \mathbb{R}^n$ $\mathbb{P}$-a.s., and for any stopping time $\tau \in \mathcal{T}_{[0,T]}$, event $A \in \mathcal{F}_\tau$ with $\mathbb{P}(A) > 0$, spatial random variable $\xi \in L^{p^2}(A, \mathcal{F}_\tau; \mathbb{R}^n)$, and any test function $\varphi \in \mathcal{G}^+_V(\tau, \xi; A)$ (resp., $\varphi \in \mathcal{G}^-_V(\tau, \xi; A)$), there exists a sequence $(s_n, x_n)$ with $s_n \downarrow \tau$ a.s. on $A$ and $x_n \to \xi$ in probability such that
		\begin{equation}
			\liminf_{n\to\infty} \mathbb{E}\big[ \alpha(s_n, x_n) - F^\varphi(s_n, x_n) \;\big|\; \mathcal{F}_\tau \big] \le 0
		\end{equation}
		for $\mathbb{P}$-almost all $\omega \in A$ (resp.,
		\begin{equation}
			\limsup_{n\to\infty} \mathbb{E}\big[ \alpha(s_n, x_n) - F^\varphi(s_n, x_n) \;\big|\; \mathcal{F}_\tau \big] \ge 0
		\end{equation}
		for $\mathbb{P}$-almost all $\omega \in A$). A random field $V \in \mathcal{S}^p(\mathbb{R})$ is a stochastic viscosity solution to the BSHJB equation \cref{eq:BSHJB_compact} if it is both a stochastic viscosity subsolution and a stochastic viscosity supersolution.
	\end{definition}
	
	\subsection{Existence of the Stochastic Viscosity Solution}
	
	To facilitate the proof of existence, we introduce a convenient criterion that guarantees a test function is \emph{not} a tangency. For any stopping time $\tau \in \mathcal{T}_{[0,T]}$, random variable $\xi \in L^{p^2}(\Omega, \mathcal{F}_\tau; \mathbb{R}^n)$, and radius $\tilde{\delta} > 0$, we define the forward stochastic neighborhood as:
	\begin{equation}
		Q_{\tilde{\delta}}^+(\tau, \xi) := \Big\{ (s, x) \in \big[\tau, (\tau + \tilde{\delta}) \wedge T\big] \times \mathbb{R}^n \;\Big|\; |x - \xi| \le \tilde{\delta} \Big\}.
	\end{equation}
	
	\begin{proposition} \label{prop:equivalent_criterion}
		A random field $V \in \mathcal{S}^p(\mathbb{R})$ with $V(T, x) \le$ (resp., $\ge$) $h(x)$ for all $x \in \mathbb{R}^n$ $\mathbb{P}$-a.s. is a stochastic viscosity subsolution (resp., supersolution) to the BSHJB equation \cref{eq:BSHJB_compact} if and only if for any stopping time $\tau \in \mathcal{T}_{[0,T]}$, event $A \in \mathcal{F}_\tau$ with $\mathbb{P}(A) > 0$, random variable $\xi \in L^{p^2}(A, \mathcal{F}_\tau; \mathbb{R}^n)$, and test function $\varphi \in \mathcal{T}^p$, the following holds:
		
		Whenever there exist constants $\varepsilon > 0$, $\tilde{\delta} > 0$, and an event $A' \in \mathcal{F}_\tau$ with $A' \subset A$ and $\mathbb{P}(A') > 0$ such that
		\begin{equation}
			\operatorname*{ess\,inf}_{(s,x) \in Q_{\tilde{\delta}}^+(\tau, \xi) \cap \mathbb{Q}^{1+n}} \mathbb{E}\big[ \alpha(s, x) - F^\varphi(s, x) \;\big|\; \mathcal{F}_\tau \big] \ge \varepsilon
		\end{equation}
		for $\mathbb{P}$-almost all $\omega \in A'$ (resp.,
		\begin{equation}
			\operatorname*{ess\,sup}_{(s,x) \in Q_{\tilde{\delta}}^+(\tau, \xi) \cap \mathbb{Q}^{1+n}} \mathbb{E}\big[ \alpha(s, x) - F^\varphi(s, x) \;\big|\; \mathcal{F}_\tau \big] \le -\varepsilon
		\end{equation}
		for $\mathbb{P}$-almost all $\omega \in A'$), then we must have $\varphi \notin \mathcal{G}^+_V(\tau, \xi; A)$ (resp., $\varphi \notin \mathcal{G}^-_V(\tau, \xi; A)$).
	\end{proposition}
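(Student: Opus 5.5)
We argue the two implications separately for the subsolution case; the supersolution case is symmetric, with $\mathcal{G}^-_V$, $\operatorname{ess\,sup}$ and reversed inequalities. Both directions are contrapositive, and both rest on one observation: for a fixed test function $\varphi\in\mathcal{G}^+_V(\tau,\xi;A)$, the subsolution condition in \cref{def:viscosity_solution} fails exactly when the conditional quantities $\mathbb{E}[\alpha(s,x)-F^\varphi(s,x)\mid\mathcal{F}_\tau]$ stay uniformly positive along \emph{every} sequence $(s_n,x_n)\to(\tau,\xi)$, on some non-null subset of $A$.

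\emph{Necessity.} Let $V$ be a subsolution and suppose the hypothesis of the criterion holds with $\varepsilon,\tilde\delta,A'$, but $\varphi\in\mathcal{G}^+_V(\tau,\xi;A)$.
\begin{enumerate}
\item First we check that the tangency restricts to $A'$. Since $A'\in\mathcal{F}_\tau$, multiplying the defining identity of $\mathcal{G}^+_V$ by $\mathbf 1_{A'}$ and using that conditional expectation commutes with $\mathcal{F}_\tau$-indicators gives $\varphi\in\mathcal{G}^+_V(\tau,\xi;A')$, with the same $\hat\tau$.
\item The subsolution property then yields $(s_n,x_n)$ with $s_n\downarrow\tau$ a.s. on $A'$ and $x_n\to\xi$ in probability, such that $\liminf_n\mathbb{E}[\alpha(s_n,x_n)-F^\varphi(s_n,x_n)\mid\mathcal{F}_\tau]\le0$ a.e. on $A'$.
\item After passing to a subsequence, $x_n\to\xi$ a.s. Hence, for a.e. $\omega\in A'$, $(s_n,x_n)\in Q^+_{\tilde\delta}(\tau,\xi)$ for all $n\ge N(\omega)$.
\item We replace $(s_n,x_n)$ by rational approximants $(s_n',x_n')$. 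The $C^1$/$C^2$ regularity and the polynomial bounds of \cref{def:test_space}, together with the structural continuity of $H$, $f$, $g$ and of the nonlocal terms (dominated convergence using $\int_E e^{\rho}\,d\nu<\infty$), give $\mathbb{E}[|\alpha(s_n,x_n)-\alpha(s_n',x_n')|+|F^\varphi(s_n,x_n)-F^\varphi(s_n',x_n')|\mid\mathcal{F}_\tau]\to0$.
\item Since the $\operatorname{ess\,inf}$ over the countable set $Q^+_{\tilde\delta}\cap\mathbb{Q}^{1+n}$ is a genuine pointwise infimum a.s., we get $\liminf\ge\varepsilon$ on a non-null subset of $A'$. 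This contradicts step 2.
\end{enumerate}

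\emph{Sufficiency.} Assume the criterion, take $\varphi\in\mathcal{G}^+_V(\tau,\xi;A)$, and suppose no admissible sequence satisfies the $\liminf$ condition. We build the natural candidate sequence $s_k=(\tau+1/k)\wedge T$, $x_k=\xi$ with rational approximants, and define
\[
\Theta_k:=\operatorname*{ess\,inf}_{(s,x)\in Q^+_{1/k}(\tau,\xi)\cap\mathbb{Q}^{1+n}}\mathbb{E}\big[\alpha(s,x)-F^\varphi(s,x)\mid\mathcal{F}_\tau\big].
\]
The sequence $\Theta_k$ is nondecreasing in $k$. If $\lim_k\Theta_k\le0$ a.e. on $A$, then by measurable selection over the countable set we choose $\mathcal{F}_\tau$-measurable rational points $(s_k,x_k)\in Q^+_{1/k}$ nearly attaining $\Theta_k$. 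These satisfy the $\liminf$ requirement, which is a contradiction. Otherwise there are $\varepsilon>0$, $k$, and a non-null $A'\subset A$ with $\Theta_k\ge\varepsilon$ on $A'$. The criterion with $\tilde\delta=1/k$ then says $\varphi\notin\mathcal{G}^+_V(\tau,\xi;A)$, again a contradiction. The random choices $(s_k,x_k)$ are admissible because the definition allows $\mathcal{F}_\tau$-measurable random points, with $s_k\downarrow\tau$ and $|x_k-\xi|\le1/k$.

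\emph{Main obstacle.} The delicate step is step 4, passing between arbitrary (random) points and the rational grid. The conditional expectation is defined pointwise in $(s,x)$ only up to null sets, so we need a jointly measurable version of $(s,x)\mapsto\mathbb{E}[\alpha(s,x)-F^\varphi(s,x)\mid\mathcal{F}_\tau]$ that is continuous in $x$ and right-continuous in $s$. For $x$, we would get continuity from the $C^1$ bounds on $\alpha,\beta$ and the polynomial growth of $\varphi$, $D\varphi$, $D^2\varphi$ together with the conditional moment estimates of \cref{lem:state_estimates}. For $s$, we would use right-continuity via the càdlàg property and dominated convergence. The time direction is the weakest point: $\alpha$ is only predictable in $s$. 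There we would fall back on the $\liminf$/$\limsup$ formulation, choosing rational times where the conditional quantity is close to its essential infimum, rather than relying on genuine continuity in $s$.
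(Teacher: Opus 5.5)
Your sufficiency (``if'') argument is sound, and it spells out what the paper leaves implicit. The paper gives no proof: it only remarks that the proposition is a consequence of the definition, and that only this direction is used (in the existence theorem). The nondecreasing family $\Theta_k$ does the work, together with two ingredients. First, the dichotomy on $A\cap\{\lim_k\Theta_k>0\}$ produces $\varepsilon=1/m$, $\tilde\delta=1/k$ and an $\mathcal{F}_\tau$-measurable $A'$ to feed into the criterion. Second, you select $\mathcal{F}_\tau$-measurable, countably-valued near-minimizers $(s_k,x_k)\in Q^+_{1/k}(\tau,\xi)\cap\mathbb{Q}^{1+n}$, for which $\mathbb{E}[\alpha(s_k,x_k)-F^\varphi(s_k,x_k)\mid\mathcal{F}_\tau]$ is exactly the conditional field evaluated at $(s_k,x_k)$. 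This yields the required sequence.

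The necessity (``only if'') direction, however, has a genuine gap at Step~3. Put $a_n:=\mathbb{E}[\alpha(s_n,x_n)-F^\varphi(s_n,x_n)\mid\mathcal{F}_\tau]$. Extracting a subsequence with $x_{n_k}\to\xi$ a.s.\ can at best give $\liminf_k a_{n_k}\ge\varepsilon$. But a $\liminf$ along a subsequence is always at least the $\liminf$ along the full sequence, so this does not contradict $\liminf_n a_n\le 0$ from Step~2.

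A cleverer choice of subsequence cannot fix this. Since $x_n\to\xi$ only in probability, the sequence supplied by the definition may, for every $\omega$, leave $Q^+_{\tilde\delta}(\tau,\xi)$ infinitely often. For example, suppose $\mathcal{F}_\tau$ is non-atomic on $A$. Take typewriter sets $B_n\in\mathcal{F}_\tau$ with $\mathbb{P}(B_n)\to 0$ and every $\omega\in A$ in infinitely many $B_n$. Put $x_n=x^*$ on $B_n$ and $x_n=\xi$ off $B_n$, for a distant point $x^*$. Then $x_n\to\xi$ in probability, and $\liminf_n a_n\le 0$ holds as soon as $\mathbb{E}[\alpha(s,x^*)-F^\varphi(s,x^*)\mid\mathcal{F}_\tau]\le 0$ for $s$ near $\tau$, whatever the lower bound $\varepsilon$ on $Q^+_{\tilde\delta}(\tau,\xi)$. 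So the contradiction is not available from that sequence. You would need a stronger reading of the definition, e.g.\ $x_n\to\xi$ a.s., or the $\liminf$ replaced by $\lim_k\Theta_k$.

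Two further steps of this direction are also unsupported.
\begin{itemize}
\item Step~5 treats $a_n$ as the field $(s,x)\mapsto\mathbb{E}[\alpha(s,x)-F^\varphi(s,x)\mid\mathcal{F}_\tau]$ evaluated at a random point. That is legitimate only if $(s_n,x_n)$ are $\mathcal{F}_\tau$-measurable. The definition permits such points (you use this in the sufficiency part), but it does not force the points it hands you to be of that kind.
\item Step~4 needs continuity of $s\mapsto\alpha(s,x)$, whereas $\alpha$ is only a predictable drift, determined $ds\otimes d\mathbb{P}$-a.e. As your last paragraph concedes, choosing rational times near the infimum works only when you pick the points (the sufficiency direction), not when the times $s_n$ are given to you. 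Passing from the rational grid to arbitrary points needs an extra right-continuity hypothesis in time, which the paper's remark after the proposition assumes tacitly.
\end{itemize}
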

	
	\begin{remark}
		Restricting the essential extrema to the countable dense subset $\mathbb{Q}^{1+n}$ ensures they remain well-defined $\mathcal{F}_\tau$-measurable random variables. This restriction is equivalent to taking the extrema over the continuous domain $Q_{\tilde{\delta}}^+(\tau, \xi)$, owing to the right-continuity in time and continuity in space of the involved semimartingales. Proposition~\ref{prop:equivalent_criterion} is actually a consequence of the definition; we will only use the ``if'' direction in the existence proof.
	\end{remark}
	
	\begin{theorem} \label{thm:existence}
		Let Assumptions \ref{assum:coefficients} and \ref{assum:recursive_cost} hold. The value function $V(t, x)$ defined in our recursive optimal control problem is a stochastic viscosity solution to the BSHJB equation \eqref{eq:BSHJB_compact}.
	\end{theorem}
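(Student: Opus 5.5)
We argue by contradiction through the ``if'' direction of \cref{prop:equivalent_criterion}, treating the subsolution property in detail; the supersolution property is symmetric, with the roles of the two DPP inequalities exchanged. The terminal conditions $V(T,x)=h(x)$ hold by construction, and $V\in\mathcal{S}^p(\mathbb{R})$ follows from \cref{lem:value_func_prop} together with the DPP (\cref{thm:dpp_value_function}), which gives time regularity. So fix $\tau$, $A$, $\xi$ and $\varphi\in\mathcal{T}^p$. Suppose there exist $\varepsilon,\tilde\delta>0$ and $A'\subset A$ with
\[
\mathbb{E}\big[\alpha(s,x)-F^\varphi(s,x)\,\big|\,\mathcal{F}_\tau\big]\ge\varepsilon \quad\text{on } Q^+_{\tilde\delta}(\tau,\xi)\cap\mathbb{Q}^{1+n},\ \text{a.s. on } A'.
\]
Assume also that $\varphi\in\mathcal{G}^+_V(\tau,\xi;A)$ with some $\hat\tau\in\mathcal{T}^+_\tau$. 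We aim to contradict the tangency identity.

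\textbf{Step 1 (measurable selection).} Because $F^\varphi$ is an essential infimum over the compact set $U$ of an expression that is continuous in $u$, we can choose a predictable, $U$-valued, nearly minimizing feedback. This uses the measurable selection theorem applied to the continuous map $(t,\omega,x,u)\mapsto H(\dots)+\int_E[\dots]\nu(de)$. Plugging in the controlled state $X:=X^{\tau,\xi;u}$ gives a control $u\in\mathcal{U}[\tau,T]$ such that, for $s$ near $\tau$,
\[
\big(\text{expression inside } F^\varphi\big)(s,X(s-),u(s))\le F^\varphi(s,X(s-))+\varepsilon/4 .
\]
Next let $\theta:=\hat\tau\wedge(\tau+\delta)\wedge\inf\{s>\tau:|X(s)-\xi|>\tilde\delta\}$ with $\delta\le\tilde\delta$ small. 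By the continuity of $\alpha,F^\varphi$ in $(s,x)$ and the restriction to rationals, the lower bound $\alpha-F^\varphi\ge\varepsilon/2$ transfers, in conditional expectation, to the random points $(s,X(s-))$ for $s\in[\tau,\theta]$.

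\textbf{Step 2 (Itô--Kunita and comparison).} Apply the generalized Itô--Kunita formula \cite{ChenTang2011} to $\varphi(s,X(s))$ on $[\tau,\theta]$. This writes $(\varphi(s,X(s)),\ \beta+D\varphi\,\sigma,\ \gamma(s,e,X+g)+I_\varphi)$ as the solution of a BSDE on $[\tau,\theta]$. Its driver is $\alpha$ minus the Hamiltonian-type terms, i.e.\ $\alpha-[\text{expression}]+f(\dots)$ evaluated at the test-function characteristics; the identification of the terms $L$, $I_\varphi$, $I_\gamma$ and $\mathrm{Tr}(D\beta\,\sigma^\top)$ is precisely how $F^\varphi$ was built. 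Comparing with $G^{\tau,\xi;u}_{\tau,\theta}[\varphi(\theta,X(\theta))]$, the comparison theorem for BSDEs with jumps applies; it needs the monotonicity of $f$ in $k$ from \cref{assum:recursive_cost}(iv) and $l\ge0$. It gives, on $A'$,
\[
\varphi(\tau,\xi)\ \ge\ G^{\tau,\xi;u}_{\tau,\theta}\big[\varphi(\theta,X(\theta))\big]+c\,\varepsilon\,\mathbb{E}[\theta-\tau\,|\,\mathcal{F}_\tau],
\]
where $c>0$ comes from the linearization (a Girsanov-type exponential bounded below). The exit-time estimate from \cref{lem:state_estimates}(iii), $\mathbb{P}(\theta<\tau+\delta\,|\,\mathcal{F}_\tau)\le C\delta^{p^2/2-1}(1+|\xi|^{p^2})$, guarantees that $\mathbb{E}[\theta-\tau\,|\,\mathcal{F}_\tau]>0$ on $A'$.

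\textbf{Step 3 (tangency and DPP).} The global tangency in $\mathcal{G}^+_V$, taken with $\bar\tau=\theta$, gives $\mathbb{E}[\inf_y(\varphi-V)(\theta,y)\,|\,\mathcal{F}_\tau]\ge0$. We need the pointwise inequality $\varphi(\theta,X(\theta))\ge V(\theta,X(\theta))$, at least in the sense that can be pushed through $G$. For this we linearize $G$ (a Lipschitz driver) to write $G[\varphi]-G[V]=\mathbb{E}[\Gamma_\theta(\varphi-V)(\theta,X_\theta)\,|\,\mathcal{F}_\tau]$ with a positive density $\Gamma$. The DPP (\cref{thm:dpp}, with \cref{lem:deterministic_reduction}) then yields $G[V(\theta,X_\theta)]\ge V(\tau,\xi)=\varphi(\tau,\xi)$. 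Combining with Step 2 gives $\varphi(\tau,\xi)\ge\varphi(\tau,\xi)+c\varepsilon\,\mathbb{E}[\theta-\tau\,|\,\mathcal{F}_\tau]$ on $A'$, a contradiction.

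\textbf{Main obstacle.} The hardest point is in Step 3. The tangency condition controls only a \emph{conditional expectation of an infimum}, weighted uniformly, whereas the linearized semigroup introduces the density $\Gamma$. I would handle this by using the infimum over $y$ to remove dependence on $X_\theta$, and then either bound $\Gamma$ from below and above on the short interval, or change the measure to absorb $\Gamma$; the stopping-time freedom $\bar\tau$ in $\mathcal{G}^+_V$ can be used to localize. A secondary difficulty is justifying the near-optimal selection and the continuity transfer under polynomial growth. This rests on the $L^{p^2}$ moment bounds of \cref{lem:state_estimates} and the exponential integrability of $\rho$, which together make the nonlocal terms uniformly integrable.
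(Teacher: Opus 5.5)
\textbf{Overall assessment.} Your outline is the paper's own proof:
\begin{itemize}
\item contradiction via \cref{prop:equivalent_criterion};
\item a measurably selected, nearly minimizing control and an exit time;
\item It\^o--Kunita for $\varphi(s,X_s)$ and a linearized BSDE comparison;
\item tangency at the stopped time, then the DPP inequality $G^{\tau,\xi;u}_{\tau,\theta}[V(\theta,X_\theta)]\ge V(\tau,\xi)$.
\end{itemize}
You are even more careful than the paper in capping $\theta$ by $\hat\tau$. But the step you flag as the main obstacle is left open, and it is a genuine gap.

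\textbf{The gap in Step 3.} The paper does not close this step either: it simply asserts that $\varphi\in\mathcal{G}^+_V$ guarantees $\Delta Y_{\tau_h}=(\varphi-V)(\tau_h,X_{\tau_h})\ge 0$. Set $m_t:=\inf_y(\varphi-V)(t\wedge\hat\tau,y)$. Then \cref{def:tangency_spaces} only gives $\mathbb{E}[m_{\bar\tau}\,|\,\mathcal{F}_\tau]\ge 0$ for stopping times $\bar\tau\ge\tau$. This allows $m$ to be, for example, a nonzero $\mathbb{P}$-martingale started at $0$.

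None of your remedies bridges this.
\begin{itemize}
\item If $f$ depends on $z$ (resp.\ $k$), $\Gamma$ has a martingale part $\Gamma_{s-}\langle b_s,dW_s\rangle$ (resp.\ a $\tilde\mu$-part). Then $\mathbb{E}[(\Gamma_\theta-1)m_\theta\,|\,\mathcal{F}_\tau]$ is a covariance of order $\mathbb{E}[\theta-\tau\,|\,\mathcal{F}_\tau]$, the same order as your gain, and it has no sign.
\item Two-sided bounds $c\le\Gamma\le C$ only give $c\,\mathbb{E}[m_\theta^+]-C\,\mathbb{E}[m_\theta^-]$. A stochastic exponential with a Brownian part has no such deterministic bounds anyway. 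This also undermines the constant $c>0$ in your Step 2; the gain should be kept as $\varepsilon\,\mathbb{E}[\int_\tau^\theta\Gamma_s\,ds\,|\,\mathcal{F}_\tau]$.
\item A change of measure leaves $\mathbb{P}$, under which tangency is stated.
\item The freedom in $\bar\tau$ only yields more inequalities of the same kind.
\end{itemize}

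\textbf{Why no estimate can close it.} With the definitions as written, this step cannot be closed at all.
\begin{itemize}
\item Take $U$ a singleton, $b=\sigma=g=0$, $f=z$, and $h=\bar h(W_T)$ with $\bar h$ smooth, bounded, with bounded derivatives. Then $V(t,x)=Y_t$ with $-dY=Z\,dt-Z\,dW$, and $Z$ is bounded.
\item Let $N_t:=\mathbb{E}[\arctan W_T\,|\,\mathcal{F}_t]$, so $dN=\kappa\,dW$ with $\kappa_t=\mathbb{E}[(1+W_T^2)^{-1}\,|\,\mathcal{F}_t]\in(0,1]$ continuous.
\item Put $\varphi:=V-\lambda(N_{\cdot\vee t_0}-N_{t_0})$ with $\lambda>0$ and $t_0<T$. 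Then $\varphi\in\mathcal{T}^p$, since everything is bounded. Optional stopping gives $\varphi\in\mathcal{G}^+_V(t_0,0;\Omega)$ with $\hat\tau=T$.
\item However, on $(t_0,T]$ we have $\alpha=Z$ and $\beta=Z-\lambda\kappa$, and $F^\varphi=\beta$. Hence, along every admissible sequence,
\[
\mathbb{E}[(\alpha-F^\varphi)(s_n,x_n)\,|\,\mathcal{F}_{t_0}]=\lambda\,\mathbb{E}[\kappa_{s_n}\,|\,\mathcal{F}_{t_0}]\to\lambda\kappa_{t_0}>0 .
\]
\end{itemize}
So $V$ is not a subsolution in the sense of \cref{def:viscosity_solution}, and $\lambda<0$ breaks the supersolution property in the same way.

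\textbf{What a repair needs.} The fix must be in the framework, not the estimate. Possible routes:
\begin{itemize}
\item pathwise tangency ($\varphi\ge V$, resp.\ $\le V$, on $[\tau,\hat\tau]\times\mathbb{R}^n$ a.s.\ on $\Omega_\tau$), after which your Step 3 is just the comparison theorem;
\item tangency formulated through the nonlinear semigroup $G$;
\item generators independent of $(z,k)$, where $\Gamma$ has finite variation and two-sided bounds on it become available.
\end{itemize}

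\textbf{A smaller point in Step 1.} Your Step 1 moves a bound known only for $\mathbb{E}[\alpha-F^\varphi\,|\,\mathcal{F}_\tau]$ at deterministic points to the integrand $\Gamma_s(\alpha-F^\varphi)(s,X_s)\mathbf{1}_{\{s<\theta\}}$ without any argument. The paper glosses over this point too.
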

	
	\begin{proof}
		\textbf{Part I: Viscosity Subsolution.}
		Suppose that $V$ is not a viscosity subsolution. By \cref{def:viscosity_solution} (and \cref{prop:equivalent_criterion} which provides a sufficient condition for non-tangency), there exist an initial time $\tau \in \mathcal{T}_0$, a set $\Omega_\tau \in \mathcal{F}_\tau$ with $\mathbb{P}(\Omega_\tau) > 0$, an initial state $\xi \in L^{p^2}(\Omega_\tau, \mathcal{F}_\tau; \mathbb{R}^n)$, a super-tangent test function $\varphi \in \mathcal{G}^+_V(\tau, \xi; \Omega_\tau)$, and constants $\varepsilon > 0, \tilde{\delta} > 0$, alongside a subset $\Omega'_\tau \subset \Omega_\tau$ with $\mathbb{P}(\Omega'_\tau) > 0$ such that:
		\begin{equation}
			\operatorname*{ess\,inf}_{(s,x)\in \mathcal{Q}^+_{\tilde{\delta}}(\tau,\xi)} \mathbb{E}\big[ \alpha(s, x) - F^\varphi(s, x) \;\big|\; \mathcal{F}_\tau \big] \ge 2\varepsilon, \quad \mathbb{P}\text{-a.s. on } \Omega'_\tau.
		\end{equation}
		Since $U$ is a compact Polish space and the coefficients are continuous in $u$, the mapping
		$$ u \mapsto H(s,\xi,u,\varphi,D\varphi,\beta,D\beta,D^2\varphi,\mathcal{L}(u)) + \int_E [\mathcal{I}_\varphi - \langle g, D\varphi\rangle + \mathcal{I}_\gamma]\nu(de) $$
		is continuous. Because this mapping is also predictable with respect to $(\omega, t)$, the $2\varepsilon$-gap allows an application of the predictable measurable selection theorem (see, e.g., Bene\v{s} \cite{Benes1970}), yielding a strictly predictable process $u^* \in \mathcal{U}[\tau,T]$ such that $ds \otimes d\mathbb{P}$-a.e. on $[\tau, \tau + \tilde{\delta}] \times \Omega'_\tau$:
		\begin{equation}
			\alpha(s, \xi) \ge \varepsilon + H\big(s, \xi, u^*_s, \varphi, D\varphi, \beta, D\beta, D^2\varphi, \mathcal{L}(u^*_s)\big) + \int_E \big[ \mathcal{I}_\varphi - \langle g, D\varphi \rangle + \mathcal{I}_\gamma \big] \nu(de).
		\end{equation}
		Here $\mathcal{I}_\varphi(s,e,x,u) := \varphi(s, x+g(s,e,x,u)) - \varphi(s, x)$ and $\mathcal{I}_\gamma(s,e,x,u) := \gamma(s,e, x+g(s,e,x,u))$. 
		
		Since the test function $\varphi$ and the coefficients $(b,\sigma,g)$ are continuous in $x$ with at most polynomial growth, the functions $\alpha$, $H$, $D\varphi$, $D\beta$, $D^2\varphi$, $\mathcal{I}_\varphi$, $\mathcal{I}_\gamma$ are uniformly continuous on the compact stochastic cylinder $Q_{\tilde{\delta}}^+(\tau,\xi)\cap\Omega'_\tau$. Consequently, the strict inequality established at the point $\xi$ by the measurable selection theorem extends to the entire neighborhood $Q_{\tilde{\delta}}^+(\tau,\xi)$ after possibly shrinking $\varepsilon$ and $\tilde{\delta}$. More precisely, for $\mathbb{P}$-a.e. $\omega\in\Omega'_\tau$ and every $(s,x)\in Q_{\tilde{\delta}}^+(\tau,\xi)$, we have
		\begin{equation}
			\alpha(s,x) \ge \frac{\varepsilon}{2} + H\big(s,x,u^*_s,\varphi,D\varphi,\beta,D\beta,D^2\varphi,\mathcal{L}(u^*_s)\big) + \int_E \big[ \mathcal{I}_\varphi - \langle g, D\varphi \rangle + \mathcal{I}_\gamma \big] \nu(de),
		\end{equation}
		where the spatial arguments in the right-hand side are evaluated at $(s,x)$. Since the controlled trajectory $X_s$ remains within $Q_{\tilde{\delta}/2}^+(\tau,\xi)$ up to the exit time $\tilde{\tau}$, substituting $x = X_s$ is legitimate on $[\tau,\tau_h]$. 
		
		We define the exiting stopping time of the state process from the spatial neighborhood:
		\begin{equation}
			\tilde{\tau} := \inf \left\{ s > \tau : \big| X_s^{\tau,\xi;u^*} - \xi \big| \ge \frac{\tilde{\delta}}{2} \right\}.
		\end{equation}
		Setting $\tau_h := (\tau+h) \wedge \tilde{\tau}$ for a sufficiently small $h > 0$, standard moment estimates via Lemma \ref{lem:state_estimates}(iii) and Kolmogorov's continuity criterion guarantee:
		\begin{equation}
			\mathbb{E}\left[ \sup_{\tau \le s \le \tau+h} |X^{\tau,\xi;u^*}_s - \xi|^{p^2} \;\Big|\; \mathcal{F}_\tau \right] \le K(1 + |\xi|^{p^2}) h^{p^2/4}.
		\end{equation}
		Applying Chebyshev's inequality yields the conditional early-exit probability:
		\begin{equation}
			\mathbb{P}\big( \tilde{\tau} < \tau + h \;\big|\; \mathcal{F}_\tau \big) \le \frac{2^{p^2}}{\tilde{\delta}^{p^2}} \mathbb{E}\left[ \sup_{\tau \le s \le \tau+h} \big| X_s^{\tau,\xi;u^*} - \xi \big|^{p^2} \;\Big|\; \mathcal{F}_\tau \right] \le \frac{2^{p^2}K}{\tilde{\delta}^{p^2}} \big(1 + |\xi|^{p^2}\big) h^{p^2/4}.
		\end{equation}
		Consequently, the expected elapsed time satisfies:
		\begin{equation}
			\mathbb{E}\big[ \tau_h - \tau \;\big|\; \mathcal{F}_\tau \big] \ge h \mathbb{P}\big( \tilde{\tau} \ge \tau + h \;\big|\; \mathcal{F}_\tau \big) \ge h - \frac{2^{p^2}K}{\tilde{\delta}^{p^2}} \big(1 + |\xi|^{p^2}\big) h^{1 + p^2/4}.
		\end{equation}
		This ensures that $\mathbb{E}[ \tau_h - \tau | \mathcal{F}_\tau ] \ge h - \mathcal{O}(h^2)$, providing the exact analytical order required for the asymptotic contradiction. Applying the generalized It\^o-Kunita formula to $\varphi(s, X_s)$ along the controlled trajectory $X_s := X^{\tau,\xi;u^*}_s$ on $[\tau, \tau_h]$ yields:
		\begin{equation}
			\begin{aligned}
				d\varphi(s, X_s) &= \bigg\{ -\alpha(s, X_s) + \langle D\varphi(s, X_s), b(s, X_s, u^*_s) \rangle \\
				&\quad + \frac{1}{2}\operatorname{Tr}\big(D^2\varphi(s, X_s)\sigma\sigma^\top(s, X_s, u^*_s)\big) \\
				&\quad + \operatorname{Tr}\big(D\beta(s, X_s)\sigma^\top(s, X_s, u^*_s)\big) \\
				&\quad + \int_E \big[ \mathcal{I}_\varphi - \langle D\varphi, g \rangle + \mathcal{I}_\gamma \big] \nu(de) \bigg\} ds \\
				&\quad + \big( \beta(s, X_s) + \sigma^\top(s, X_s, u^*_s)D\varphi(s, X_s) \big) dW_s \\
				&\quad + \int_E \big[ \mathcal{I}_\varphi(s, e, X_{s-}, u^*_s) + \gamma\big(s, e, X_{s-} + g(s, e, X_{s-}, u^*_s)\big) \big] \tilde{\mu}(ds, de).
			\end{aligned}
		\end{equation}
		Recasting this differential into the canonical backward form $-d\varphi(s, X_s) = \hat{f}_s ds - \hat{Z}_s dW_s - \int_E \hat{K}_s(e) \tilde{\mu}(ds, de)$ uniquely identifies the martingale coefficients:
		\begin{align}
			\hat{Z}_s &:= \beta(s, X_s) + \sigma^\top(s, X_s, u^*_s)D\varphi(s, X_s), \\
			\hat{K}_s(e) &:= \mathcal{I}_\varphi(s, e, X_{s-}, u^*_s) + \gamma\big(s, e, X_{s-} + g(s, e, X_{s-}, u^*_s)\big).
		\end{align}
		By isolating $\alpha(s, X_s)$ and substituting the strict inequality established via measurable selection, the equivalent drift $\hat{f}_s$ satisfies:
		\begin{equation}
			\hat{f}_s \ge f\left(s, X_s, u^*_s, \varphi(s, X_s), \hat{Z}_s, \int_E \hat{K}_s(e)l(s, e)\nu(de)\right) + \varepsilon, \quad \text{a.e. in } [\tau, \tau_h].
		\end{equation}
		
		Let $Y_s^{u^*}$ be the solution to the standard BSDE associated with control $u^*$ and terminal condition $V(\tau_h, X_{\tau_h})$. We define the difference processes $\Delta Y_s := \varphi(s, X_s) - Y_s^{u^*}$, $\Delta Z_s := \hat{Z}_s - Z_s^{u^*}$, and $\Delta K_s := \hat{K}_s - K_s^{u^*}$, whose dynamics satisfy:
		\begin{equation}
			-d(\Delta Y_s) = \Big( \hat{f}_s - f\big(s, X_s, u^*_s, Y_s^{u^*}, Z_s^{u^*}, K_s^{u^*}\big) \Big) ds - \Delta Z_s dW_s - \int_E \Delta K_s \tilde{\mu}(ds, de).
		\end{equation}
		Applying the mean-value theorem to the uniformly Lipschitz generator $f$ produces bounded predictable processes $a_s$, $b_s$, and $c_s(e) > -1$, linearizing the drift inequality into:
		\begin{equation}
			\begin{aligned}
				-d(\Delta Y_s) &\ge \bigg( \varepsilon + a_s\Delta Y_s + \langle b_s, \Delta Z_s \rangle + \int_E c_s(e)\Delta K_s(e)\nu(de) \bigg) ds \\
				&\quad - \Delta Z_s dW_s - \int_E \Delta K_s \tilde{\mu}(ds, de).
			\end{aligned}
		\end{equation}
		
		To absorb the difference terms, we introduce a process $\Gamma_s$ governed by the linear forward SDE:
		\begin{equation}
			d\Gamma_s = \Gamma_{s-} \left( a_s ds + \langle b_s, dW_s \rangle + \int_E c_s(e)\tilde{\mu}(ds, de) \right), \quad \Gamma_\tau = 1.
		\end{equation}
		Applying It\^o's product rule to $\Gamma_s \Delta Y_s$ on $[\tau, \tau_h]$ and taking the conditional expectation, the local martingales vanish. Since $\varphi \in \mathcal{G}_V^+$ guarantees a non-negative terminal difference $\Delta Y_{\tau_h} \ge 0$, dropping this term and utilizing the strictly positive adjoint bound $\Gamma_s \ge c > 0$ yields:
		\begin{equation}
			\Delta Y_\tau \ge \varepsilon \mathbb{E}\left[\int_\tau^{\tau_h} \Gamma_s ds \;\Big|\; \mathcal{F}_\tau\right] \ge c\varepsilon \mathbb{E}\big[ \tau_h - \tau \;\big|\; \mathcal{F}_\tau \big].
		\end{equation}
		
		At the contact point $\tau$, the fundamental touching condition enforces $\Delta Y_\tau = 0$. Substituting the asymptotic exit time estimate $\mathbb{E}[ \tau_h - \tau | \mathcal{F}_\tau ] \ge h - \mathcal{O}(h^2)$, we construct the ultimate algebraic contradiction:
		\begin{equation}
			0 \ge c\varepsilon h - \mathcal{O}(h^2) > 0, \quad \mathbb{P}\text{-a.s.}
		\end{equation}
		for a sufficiently small $h > 0$. This strict inequality $0 > 0$ completely falsifies the initial assumption, rigorously proving that $V$ is a stochastic viscosity subsolution to the BSHJB equation.
		
		\vspace{1em}
		\noindent \textbf{Part II: Viscosity Supersolution.}
		We argue by contradiction. Suppose that $V$ is not a viscosity supersolution. Then there exist $\tau, \Omega_\tau, \xi, \varphi \in \mathcal{G}^-_V(\tau,\xi;\Omega_\tau)$ and constants $\varepsilon, \tilde{\delta} > 0$, $\Omega'_\tau \subset \Omega_\tau$ with $\mathbb{P}(\Omega'_\tau)>0$, such that
		\begin{equation}
			\operatorname*{ess\,sup}_{(s,x)\in \mathcal{Q}^+_{\tilde{\delta}}(\tau,\xi)} \mathbb{E}\big[ \alpha(s, x) - F^\varphi(s, x) \;\big|\; \mathcal{F}_\tau \big] \le -2\varepsilon, \quad \mathbb{P}\text{-a.s. on } \Omega'_\tau.
		\end{equation}
		Because $F^\varphi$ is defined as the essential infimum over $U$, the upper bound $-2\varepsilon$ is universal for \emph{every} control. Hence, for any $u \in \mathcal{U}$, $ds \otimes d\mathbb{P}$-a.e. on $[\tau, \tau + \tilde{\delta}] \times \Omega'_\tau$,
		\begin{equation}
			\alpha(s, \xi) \le -2\varepsilon + H\big(s, \xi, u_s, \varphi, D\varphi, \beta, D\beta, D^2\varphi, \mathcal{L}(u_s)\big) + \int_E \big[ \mathcal{I}_\varphi - \langle g, D\varphi \rangle + \mathcal{I}_\gamma \big] \nu(de).
		\end{equation}
		For a fixed $u$, define $\tilde{\tau}^u$ and $\tau_h^u$ analogously; the moment estimates give $\mathbb{E}[ \tau_h^u - \tau | \mathcal{F}_\tau ] \ge h - \mathcal{O}(h^2)$ uniformly in $u$. The same It\^o-Kunita decomposition leads to
		\begin{equation}
			\hat{f}^u_s \le f\big(s, X_s, u_s, \varphi, \hat{Z}^u_s, \int_E \hat{K}^u_s(e)l(s,e)\nu(de)\big) - 2\varepsilon, \quad \text{a.e. in } [\tau, \tau_h^u].
		\end{equation}
		Let $Y^u$ be the BSDE solution with terminal condition $V(\tau_h^u, X_{\tau_h^u})$. By the sub-tangency property $\varphi \in \mathcal{G}_V^-$, we have $Y^u_{\tau_h^u} - \varphi(\tau_h^u, X_{\tau_h^u}) \ge 0$. Applying the linearisation and the adjoint process to $\Delta Y_s := Y^u_s - \varphi(s,X_s)$ yields
		\begin{equation}
			Y^u_\tau - \varphi(\tau, \xi) \ge 2c\varepsilon \mathbb{E}\big[ \tau_h^u - \tau \;\big|\; \mathcal{F}_\tau \big] \ge 2c\varepsilon \big(h - \mathcal{O}(h^2)\big).
		\end{equation}
		Taking $\operatorname*{ess\,inf}_{u \in \mathcal{U}}$ on the left-hand side, the DPP gives $\mathbb{V}(\tau,\xi) = \operatorname*{ess\,inf}_u Y^u_\tau$, while the touching condition gives $\varphi(\tau,\xi) = \mathbb{V}(\tau,\xi)$ on $\Omega_\tau$. Thus the left-hand side becomes zero, leading to
		\begin{equation}
			0 \ge 2c\varepsilon h - \mathcal{O}(h^2) > 0,
		\end{equation}
		a contradiction. Hence $V$ is a stochastic viscosity supersolution.
		
		\textbf{Conclusion.} Parts I and II, together with the terminal condition $V(T, x) = h(x)$, rigorously establish $V$ as a stochastic viscosity solution to the fully nonlinear BSHJB equation, completing the proof of \cref{thm:existence}.
	\end{proof}
	
	\subsection{Maximal Viscosity Subsolution and Weak Comparison}
	
	To establish the weak comparison principle, we construct smooth approximations for the coefficients. To accommodate the $p$-th order polynomial growth, uniform convergence is evaluated under the standard supremum norm for the Lipschitz forward coefficients, and under the weighted norm $w_p(x) := (1 + |x|^p)^{-1}$ for $f$ and $h$. Let $\rho \in C^\infty_c(\mathbb{R}^n)$ be the standard non-negative symmetric mollifier:
	\begin{equation}
		\rho(x) :=
		\begin{cases}
			\tilde{c} \exp\left(\frac{1}{|x|^2-1}\right), & \text{if } |x| < 1, \\
			0, & \text{if } |x| \ge 1,
		\end{cases}
	\end{equation}
	where $\tilde{c}$ is the normalization constant such that $\int_{\mathbb{R}^n} \rho(x)dx = 1$. For each $l \in \mathbb{N}^+$, define the scaled sequence $\rho_l(x) := l^n \rho(lx)$. We define the spatial convolutions for the state equation coefficients and the terminal condition as follows:
	\begin{equation} \label{eq:mollified_forward}
		\begin{aligned}
			b_l(t, x, u) &:= \int_{\mathbb{R}^n} \rho_l(x - y)b(t, y, u)dy, \quad &\sigma_l(t, x, u) &:= \int_{\mathbb{R}^n} \rho_l(x - y)\sigma(t, y, u)dy, \\
			g_l(t, e, x, u) &:= \int_{\mathbb{R}^n} \rho_l(x - y)g(t, e, y, u)dy, \quad &h_l(x) &:= \int_{\mathbb{R}^n} \rho_l(x - y)h(y)dy.
		\end{aligned}
	\end{equation}
	For the generator $f$, the convolution is performed exclusively with respect to the spatial variable $x$:
	\begin{equation} \label{eq:mollified_f}
		f_l(t, x, u, y, z, k) := \int_{\mathbb{R}^n} \rho_l(x - y)f(t, y, u, y, z, k)dy.
	\end{equation}
	
	Under Assumptions \ref{assum:coefficients} and \ref{assum:recursive_cost}, standard mollifier properties ensure the following uniform and weighted convergences as $l \to \infty$:
	\begin{equation}
		\|h - h_l\|_{\mathcal{S}^\infty(L^\infty_{w_p})} + \sup_{u \in U} \big( \|b - b_l\|_{\mathcal{S}^\infty(L^\infty)} + \|\sigma - \sigma_l\|_{\mathcal{S}^\infty(L^\infty)} + \|g - g_l\|_{\mathcal{S}^\infty(L^\infty_\nu)} \big) \to 0,
	\end{equation}
	where $L^\infty_\nu := L^\infty(\mathbb{R}^n; L^2(\nu))$. Moreover, the state-independent Lipschitz continuity of $f$ in $(y, z, k)$ guarantees that its weighted convergence is uniform across all backward variables:
	\begin{equation}
		\lim_{l \to \infty} \sup_{u \in U, y, z, k} \|f(\cdot, \cdot, u, y, z, k) - f_l(\cdot, \cdot, u, y, z, k)\|_{\mathcal{S}^\infty(L^\infty_{w_p})} = 0.
	\end{equation}
	Since the mollified coefficients possess bounded spatial derivatives of all orders locally, classical well-posedness theory for stochastic partial differential equations applies directly.
	
	\begin{proposition} \label{prop:smooth_BSPDE}
		For each integer $l \in \mathbb{N}^+$ and fixed admissible control $u(\cdot) \in \mathcal{U}$, there exists a unique predictable classical solution $V^{l,u} \in \mathcal{T}^p$ to the following smoothed BSHJB equation governed by the mollified coefficients and the \emph{fixed} control $u$:
		\begin{equation} \label{eq:smooth_BSHJB}
			\begin{cases}
				\begin{aligned}
					-dV^{l,u}(t, x) &= F^{l,u}\big(t, x, V^{l,u}, Z^{l,u}, K^{l,u}\big)dt - Z^{l,u}(t, x)dW(t) \\
					&\quad - \int_E K^{l,u}(t, e, x)\tilde{\mu}(dt, de),
				\end{aligned}\\
				V^{l,u}(T, x) = h_l(x),
			\end{cases}
		\end{equation}
		where $F^{l,u}$ is defined analogously to \eqref{eq:drift_F} but using the mollified coefficients and the fixed control $u$. Furthermore, for any initial data $(t,x) \in [0,T] \times \mathbb{R}^n$, the evaluated processes along the forward trajectory $X_s \equiv X_s^{t,x; u, l}$, defined by
		\begin{equation} \label{eq:fbsde_mapping}
			\begin{aligned}
				Y_s &:= V^{l,u}(s, X_s), \\
				Z_s &:= Z^{l,u}(s, X_{s-}) + \sigma_l^\top(s, X_{s-}, u_s)DV^{l,u}(s, X_{s-}), \\
				K_s(e) &:= K^{l,u}\big(s, e, X_{s-} + g_l(s, e, X_{s-}, u_s)\big) + I_{V^{l,u}}(s, e, X_{s-}, u_s),
			\end{aligned}
		\end{equation}
		satisfy the associated smoothed FBSDE system on $[t, T]$.
	\end{proposition}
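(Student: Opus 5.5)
The plan is to build $V^{l,u}$ probabilistically, through the smoothed FBSDE, rather than through an abstract SPDE theorem. After that we verify regularity and then identify the equation by an Itô--Kunita argument.

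\textbf{Step 1 (construction).} Fix $l$ and $u$. For $(t,x)\in[0,T]\times\mathbb{R}^n$, let $X^{t,x;u,l}$ solve the forward SDE with coefficients $(b_l,\sigma_l,g_l)$. Let $(Y^{t,x},Z^{t,x},K^{t,x})$ solve the BSDE with generator $f_l$ and terminal value $h_l(X_T)$. We set $V^{l,u}(t,x):=Y^{t,x}_t$. Since the control is fixed, no essential infimum appears, and this quantity is an $\mathcal{F}_t$-measurable random field. Lemmas \ref{lem:state_estimates}, \ref{lem:cost_estimates} and \ref{lem:perf_index} hold verbatim for the mollified coefficients, because these satisfy Assumptions \ref{assum:coefficients}--\ref{assum:recursive_cost} with the same constants. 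They give the growth bound $|V^{l,u}|\le C(1+|x|^p)$ and local Lipschitz continuity in $x$. The flow property, Lemma \ref{lem:flow_identity}, gives the Markov-type identity $Y^{t,x}_s=V^{l,u}(s,X^{t,x}_s)$.

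\textbf{Step 2 (spatial regularity).} Since $b_l,\sigma_l,g_l,h_l$ and $f_l$ are smooth in $x$ with derivatives of controlled growth, we differentiate the FBSDE in $x$ twice, in the mean-square sense. The derivative processes $\partial_x X$ and $\partial_x^2X$ solve linear SDEs with jumps. The pairs $(\partial_x Y,\partial_xZ,\partial_xK)$ and their second derivatives solve linear BSDEs with jumps. Under Assumption \ref{assum:coefficients}(iii), the $L^{p^2}$ moment estimates give bounds of the form $C(1+|x|^{p-1})$ and $C(1+|x|^{p-2})$. Kolmogorov's criterion applied to difference quotients then yields $V^{l,u}(t,\cdot)\in C^2$ with the growth required by Definition \ref{def:test_space}(iii).

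\textbf{Step 3 (semimartingale structure and the equation).} This is the main obstacle: producing the decomposition of $V^{l,u}(\cdot,x)$ for fixed $x$, and showing that $Z^{l,u}(t,\cdot)\in C^1$, $K^{l,u}\in C$ and the drift is $C^1$. Because the coefficients are random, $V^{l,u}(\cdot,x)$ is not a deterministic function of time. I would first take the Doob--Meyer/martingale representation of $V^{l,u}(\cdot,x)$ for each fixed $x$ with respect to $(W,\tilde\mu)$, which defines $(\alpha,Z^{l,u},K^{l,u})$. The idea is that the BSPDE holds in the weak (Sobolev) sense, by the existence theory of \cite{MengDongShenTang2023}, whose framework applies to these smooth coefficients with a fixed control. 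The Sobolev regularity of all orders furnished by the smoothness of the coefficients then upgrades via embedding to classical regularity. Once $(V^{l,u},Z^{l,u},K^{l,u})$ is classical, the generalized Itô--Kunita formula \cite{ChenTang2011} applied to $V^{l,u}(s,X_s)$ produces a drift and martingale integrands. Comparing these with the BSDE dynamics of $Y_s=V^{l,u}(s,X_s)$ from Step 1, and invoking uniqueness of the canonical decomposition of a special semimartingale, identifies $Z_s$ and $K_s(e)$ exactly as in \eqref{eq:fbsde_mapping}. Evaluating at $s=t$, $X_t=x$ then forces the drift of $V^{l,u}(\cdot,x)$ to equal $F^{l,u}$, which is the equation \eqref{eq:smooth_BSHJB}.

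\textbf{Step 4 (uniqueness).} Take any classical solution in $\mathcal{T}^p$. Running the Itô--Kunita computation of Step 3 along $X^{t,x;u,l}$ shows that the triple in \eqref{eq:fbsde_mapping} solves the same BSDE. Uniqueness for BSDEs with jumps, under the Lipschitz condition in $(y,z,k)$, then gives equality with $Y^{t,x}_t$, hence equality of the fields. The terminal condition $h_l$ matches by construction.
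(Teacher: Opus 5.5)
There is a genuine gap in Step~2, and it carries over into Step~3. The mollification \eqref{eq:mollified_f} acts only on the spatial variable, while \cref{assum:recursive_cost} makes $f$ merely Lipschitz in $(y,z,k)$. Differentiating the FBSDE in $x$ produces derivative BSDEs whose coefficients are $\partial_y f_l,\partial_z f_l,\partial_k f_l$, plus their derivatives at second order. These need not exist.

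This is not a technicality: the conclusion itself fails. Take $b=\sigma=g=0$, $f(t,x,u,y,z,k)=-|y|$ and $h(x)=x_1$. All assumptions hold, and $f_l=f$, $h_l=h$ because the mollifier is symmetric. Then \eqref{eq:smooth_BSHJB} decouples in $x$ into Lipschitz BSDEs with deterministic data. Its unique solution is $Z^{l,u}=K^{l,u}=0$, with $V^{l,u}(t,x)=x_1e^{-(T-t)}$ for $x_1\ge 0$ and $V^{l,u}(t,x)=x_1e^{T-t}$ for $x_1<0$. This is not even $C^1$ in $x$ for $t<T$.

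For the same reason, smoothness in $x$ alone does not give the ``Sobolev regularity of all orders'' you invoke in Step~3. The requirement $\alpha(t,\cdot)\in C^1$ of \cref{def:test_space} also fails, since $\alpha=F^{l,u}$ contains $f_l(t,x,u_t,V,\sigma_l^\top DV+Z,L)$. Any repair has to change the construction, for instance by also mollifying $f$ in $(y,z,k)$. Because the Lipschitz constant in those variables is state-independent, this costs only a uniform $O(1/l)$ error and leaves the later approximation arguments intact.

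Even with a smooth generator, your route does not give $V^{l,u}\in\mathcal{T}^p$. \cref{def:test_space}(iii) demands bounds such as $|\beta|+\|\gamma\|_\nu+|\alpha|\le C(1+|x|^p)$ with a \emph{deterministic} $C$, holding $dt\otimes d\mathbb{P}$-a.e. Your conditional moment estimates give this for $V^{l,u}$ and its $x$-derivatives, which are $\mathcal{F}_t$-conditional quantities. For $Z^{l,u}$, $K^{l,u}$ and the drift, Sobolev embedding only gives random bounds that are square integrable in $(t,\omega)$. With random data this cannot be improved. For $b=\sigma=g=f=0$ and $h(x)=\mathbf{1}_{\{W_1(T)>0\}}$, the first component of $Z^{l,u}(t,x)$ is $(2\pi(T-t))^{-1/2}\exp\{-W_1(t)^2/(2(T-t))\}$. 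This exceeds any constant on a set of positive $dt\otimes d\mathbb{P}$-measure.

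For comparison, the paper gives no argument beyond the remark before the proposition that classical well-posedness theory applies to the mollified coefficients. That appeal is open to the same two objections, so your more explicit route exposes the difficulty rather than resolving it.

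Step~3 is also logically loose in three places. The Doob--Meyer decomposition of $V^{l,u}(\cdot,x)$ presupposes the semimartingale property you are trying to prove. The Sobolev solution from the cited theory must first be shown to be classical, and then identified with your FBSDE field through the Step~4 argument. Finally, the drift identity along trajectories holds only $ds\otimes d\mathbb{P}$-a.e., so ``evaluating at $s=t$'' needs a Lebesgue-point argument.
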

	
	To quantify the global approximation, we define the following space-invariant stochastic error processes:
	\begin{equation*}
		\begin{aligned}
			\delta h_l &:= \sup_{x \in \mathbb{R}^n} \big|h_l(x) - h(x)\big| w_p(x), \\
			\delta f_l(t) &:= \operatorname*{ess\,sup}_{x \in \mathbb{R}^n, u \in U, y, z, k} \big|f_l(t, x, u, y, z, k) - f(t, x, u, y, z, k)\big| w_p(x), \\
			\delta \Lambda_l(t) &:= \operatorname*{ess\,sup}_{x \in \mathbb{R}^n, u \in U} \Big( |b_l - b| + |\sigma_l - \sigma| + \|g_l - g\|_{L^2(\nu)} \Big)(t, x, u).
		\end{aligned}
	\end{equation*}
	
	Let $C_V$ be a constant such that for all smooth approximations $V^{l,u}$,
	\[
	\sup_{x\in\mathbb{R}^n} \Big[|DV^{l,u}(x)| + (1+|x|)\sup_{\xi\in\mathcal{N}(x,E)}\|D^2V^{l,u}(\xi)\|\Big] w_p(x) \le C_V,
	\]
	which exists due to the uniform $p$-th order polynomial growth of the smoothed solutions. Then define $(Y^l, Z^l, K^l) \in \mathcal{S}^2(\mathbb{R}) \times \mathcal{H}^2(\mathbb{R}^d) \times \mathcal{H}_\nu^2(\mathbb{R})$ as the unique solution to the bounding BSDE:
	\begin{equation} \label{eq:bounding_BSDE}
		Y_t^l = \delta h_l + \int_t^T \big( \delta f_l(s) + C_V \delta \Lambda_l(s) \big) ds - \int_t^T Z_s^l dW(s) - \int_t^T \int_E K_s^l(e) \tilde{\mu}(ds, de).
	\end{equation}
	By standard a priori estimates for linear BSDEs, the convergence of the smoothed coefficients implies that the driving terms of \eqref{eq:bounding_BSDE} vanish, yielding:
	\begin{equation} \label{eq:convergence_Yl}
		\lim_{l\to\infty} \|Y^l\|_{\mathcal{S}^2(\mathbb{R})} = 0.
	\end{equation}
	
	\begin{lemma} \label{lem:patched_subsolution}
		For each fixed admissible control $u(\cdot) \in \mathcal{U}$, setting our weighted patched performance index as
		\begin{equation}
			\hat{J}^{u}_l(t, x) := V^{l,u}(t, x) + Y^l_t \big(w_p(x)\big)^{-1},
		\end{equation}
		we have $\hat{J}^{u}_l \in \mathcal{T}^p$ with the global uniform limit in the weighted functional space:
		\begin{equation}
			\lim_{l\to\infty} \big\| \hat{J}^{u}_l(t, x) - J(t, x; u) \big\|_{\mathcal{S}^2(L^\infty_{w_p})} = 0,
		\end{equation}
		and it algebraically satisfies, with respect to the frozen-control drift operator $F^{l,u}$,
		\begin{equation}
			\begin{aligned}
				&\operatorname*{ess\,liminf}_{(s,x)\to(t^+,y)} \mathbb{E}\Big[ -\partial_s \hat{J}^{u}_l(s, x) - F^{l,u}\big(s, x, \hat{J}^{u}_l, Z^{l,u}, K^{l,u}\big) \;\Big|\; \mathcal{F}_t \Big] \ge 0, \\
				&\mathbb{P}\text{-a.s. } \forall (t, y) \in [0, T) \times \mathbb{R}^n.
			\end{aligned}
		\end{equation}
		Since $F^{l,u}$ uses the mollified coefficients without the infimum, we have $F^{l,u} \ge F$, where $F$ is the original infimum operator, because replacing the exact coefficients by approximations introduces a controllable error that is absorbed by $Y^l$.
	\end{lemma}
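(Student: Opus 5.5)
We prove the three assertions of Lemma~\ref{lem:patched_subsolution} in order: membership in $\mathcal{T}^p$, the weighted approximation, and the one-sided drift inequality.

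\emph{Membership in $\mathcal{T}^p$.} By Proposition~\ref{prop:smooth_BSPDE}, $V^{l,u}\in\mathcal{T}^p$ with backward representation $(\alpha,\beta,\gamma)=(F^{l,u},Z^{l,u},K^{l,u})$. The patch $Y^l_t(w_p(x))^{-1}=Y^l_t(1+|x|^p)$ is a product of the semimartingale $Y^l$ from \eqref{eq:bounding_BSDE} with a deterministic $C^2$ weight of order $p$. Its drift is $(\delta f_l+C_V\delta\Lambda_l)(1+|x|^p)$, its $dW$-integrand is $Z^l_t(1+|x|^p)$, and its jump integrand is $K^l_t(e)(1+|x|^p)$. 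The derivative bounds in \cref{def:test_space}(iii) then follow from $|D(1+|x|^p)|\lesssim |x|^{p-1}$ and $|D^2(1+|x|^p)|\lesssim |x|^{p-2}$. This requires $\delta f_l,\delta\Lambda_l,Y^l$ to be bounded, which holds since the mollification errors are essentially bounded in $\mathcal{S}^\infty$.

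\emph{Weighted approximation.} Fix $(t,x)$. The FBSDE representation \eqref{eq:fbsde_mapping} gives $V^{l,u}(t,x)=Y^{t,x;u,l}_t$, while $J(t,x;u)$ is the unmollified BSDE value. A standard $L^2$-stability estimate for BSDEs with jumps, combined with Lemma~\ref{lem:state_estimates}(ii) applied to the mollified and original forward equations, bounds $|V^{l,u}(t,x)-J(t,x;u)|$ by
\[
C(1+|x|^p)\Big(\delta h_l+\mathbb{E}\Big[\int_t^T(\delta f_l+\delta\Lambda_l)\,ds\,\Big|\,\mathcal{F}_t\Big]\Big).
\]
The coefficient estimate uses the local Lipschitz growth of $f$ and $h$ from Assumption~\ref{assum:recursive_cost}(ii). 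Multiplying by $w_p(x)$, taking $\sup_x$ and then the $\mathcal{S}^2$ norm, the right-hand side tends to zero by the convergences stated before Proposition~\ref{prop:smooth_BSPDE}. The patch term $\|Y^l\|_{\mathcal{S}^2}\to0$ by \eqref{eq:convergence_Yl}.

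\emph{Drift inequality.} For this part we compute directly. We have
\[
-\partial_s\hat J^u_l=F^{l,u}(s,x,V^{l,u},Z^{l,u},K^{l,u})+(\delta f_l+C_V\delta\Lambda_l)(1+|x|^p).
\]
Next we compare $F^{l,u}$ evaluated at $\hat J^u_l$ with $F^{l,u}$ evaluated at $V^{l,u}$. The only changes come through the arguments $y$, $D\varphi$, $D^2\varphi$ and the nonlocal terms $I$ and $L$ of the patch. Monotonicity of $f$ in $k$ (Assumption~\ref{assum:recursive_cost}(iv)) and the Lipschitz property in $y$ control these changes by a multiple of $Y^l_s(1+|x|^p)$.

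This is the \emph{main obstacle}. The patch is not a function of $x$ alone, so its gradient enters $\langle p,b\rangle$, the $\sigma^\top p$ slot of $f$, and $I_\varphi$. These contributions must be absorbed by the drift $C_V\delta\Lambda_l(1+|x|^p)$ and a Gronwall-type exponential factor. If the constants do not close, I would modify the patch to $e^{\lambda(T-t)}Y^l_t(w_p(x))^{-1}$ with $\lambda$ large, so that the extra drift $\lambda e^{\lambda(T-t)}Y^l(1+|x|^p)$ dominates the linear error terms; in the limit this does not affect the approximation statement.

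Taking $\mathbb{E}[\cdot\mid\mathcal{F}_t]$ and the $\operatorname{ess\,liminf}$ as $(s,x)\to(t^+,y)$, using right-continuity of the semimartingales and continuity in $x$, then yields the displayed inequality $\ge0$. Finally, $F^{l,u}\ge F$ in the sense claimed follows by comparing the frozen-control expression to the infimum over $U$. The coefficient mismatch between $(b_l,\sigma_l,g_l,f_l)$ and $(b,\sigma,g,f)$ is at most $(\delta f_l+C_V\delta\Lambda_l)(1+|x|^p)$ by the definition of $C_V$, which is exactly the drift contributed by the patch.
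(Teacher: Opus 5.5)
Your third step does not go through as written, and the fallback you mention changes the statement rather than proving it. Inserting $\hat J^u_l$ into $F^{l,u}$ (with $Z^{l,u},K^{l,u}$ held fixed) has four effects:
\begin{itemize}
\item it shifts the $y$-slot of $f_l$ by $Y^l_s w_p(x)^{-1}$;
\item it adds $Y^l_s\,\mathcal{L}^u_l(w_p^{-1})(x)$;
\item it perturbs the $z$-slot by $Y^l_s\sigma_l^\top D(w_p^{-1})$;
\item it perturbs the $k$-slot by $Y^l_s\int_E[w_p^{-1}(x+g_l)-w_p^{-1}(x)]\,l\,d\nu$.
\end{itemize}
All of these are $O(Y^l_s w_p(x)^{-1})$, and none has a favourable sign in general. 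Monotonicity of $f$ in $k$ does not help, since the $k$-increment has no sign. Against this, the patch only supplies the drift $(\delta f_l(s)+C_V\delta\Lambda_l(s))w_p(x)^{-1}$. There is no pointwise-in-time relation between $\delta f_l(s)+C_V\delta\Lambda_l(s)$ and $Y^l_s$, which is a conditional expectation of \emph{future} errors and of $\delta h_l$. Because the required inequality is local at $(t,y)$, no Gronwall argument can trade one for the other.

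In fact the inequality is false for the $\hat J^u_l$ of the statement. Take $b=\sigma=g=0$, $f(t,x,u,y,z,k)=y$ and $h(x)=|x|$. Then $\delta f_l=\delta\Lambda_l=0$, $Y^l\equiv\delta h_l>0$, $V^{l,u}=e^{T-t}h_l$, $Z^{l,u}=K^{l,u}=0$, and
\[
-\partial_s\hat J^u_l-F^{l,u}\big(s,x,\hat J^u_l,0,0\big)=-\delta h_l\,(1+|x|^p)<0\quad\text{for all }(s,x).
\]

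So the exponential weight, or equivalently a term $\lambda Y^l$ added to the driver of \eqref{eq:bounding_BSDE}, is necessary rather than optional. The paper itself does exactly this with $(L_y+C_\varphi)Y^{\varepsilon,(N)}$ in the uniqueness proof. For $V^{l,u}+e^{\lambda(T-t)}Y^l_t w_p(x)^{-1}$, the extra drift $\lambda e^{\lambda(T-s)}Y^l_s w_p(x)^{-1}$ absorbs the error above once $\lambda\ge C_0$. Here $C_0$ collects $L_y$, $L_z\sup_x|\sigma_l^\top D(w_p^{-1})|w_p$, the $k$-Lipschitz contribution, and the Lyapunov constant of \cref{lem:lyapunov_condition}. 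The approximation claim survives because $e^{\lambda T}\|Y^l\|_{\mathcal{S}^2}\to0$. But this proves a corrected lemma, not the stated one.

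The paper's own proof has the same defect. It records only an $L_y Y^l_s w_p(x)^{-1}$ discrepancy, dropping the gradient, Hessian and jump contributions of the patch that you correctly flagged. It then reaches the lower bound $(\delta f_l+C_V\delta\Lambda_l-L_yY^l_s)w_p(x)^{-1}$, which in the example equals $-\delta h_l(1+|x|^p)$. Finally it appeals to $l\to\infty$, which cannot give $\ge0$ at fixed $l$.

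Two smaller points. First, for the approximation you compare the mollified and original FBSDEs directly. That needs a forward stability estimate in the \emph{coefficients}, and \cref{lem:state_estimates}(ii) only varies initial data and controls. The paper instead applies It\^o--Kunita to $V^{l,u}$ along the original trajectory. The discrepancy then becomes a pure driver perturbation $(f_l-f)+(\mathcal{L}^u_l-\mathcal{L}^u)V^{l,u}$, bounded directly by $\tilde C\,Y^l_s w_p(x)^{-1}$. Second, membership in $\mathcal{T}^p$ also requires $Z^l$ to be bounded, because it enters $\beta$ and $D\beta$ of the patch. Bounded data for a linear BSDE do not guarantee this.
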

	
	\begin{proof}
		The inclusion $\hat{J}^u_l \in \mathcal{T}^p$ follows directly from \cref{prop:smooth_BSPDE}, the moment estimates of $Y^l_t$, and the polynomial weight $w_p^{-1}$. Standard a priori estimates for jump diffusions yield the trajectory bound for any $(s, x) \in [0, T] \times \mathbb{R}^n$:
		\begin{equation}
			\mathbb{E}\left[ \sup_{t \in [s,T]} |X^{s,x;u,l}_t - X^{s,x;u}_t|^2 \;\Big|\; \mathcal{F}_s \right] \le C\mathbb{E}\left[ \int_s^T \delta\Lambda_l(r)^2 dr \;\Big|\; \mathcal{F}_s \right].
		\end{equation}
		
		To bound the performance gap, we evaluate $V^{l,u}$ along the true trajectory $X^{s,x;u}_t$ using its associated integro-differential generator $\mathcal{L}^u$, defined for $\phi \in C^2(\mathbb{R}^n)$ as:
		\begin{equation}
			\begin{aligned}
				\mathcal{L}^u\phi(x) &:= \langle b(x, u), D\phi(x) \rangle + \frac{1}{2}\operatorname{Tr}\big(\sigma\sigma^\top(x, u)D^2\phi(x)\big) \\
				&\quad + \int_E \big[ \phi\big(x + g(x, u, e)\big) - \phi(x) - \langle g(x, u, e), D\phi(x) \rangle \big] \nu(de),
			\end{aligned} \label{eq:generator_L}
		\end{equation}
		with $\mathcal{L}^u_l$ denoting its mollified counterpart. Applying It\^o's formula to $V^{l,u}(r, X^{s,x;u}_r)$ and substituting the smoothed BSPDE $-(\partial_t + \mathcal{L}^u_l)V^{l,u} = f_l$, the drift term simplifies directly to:
		\begin{equation}
			-(\partial_t + \mathcal{L}^u)V^{l,u} = f_l\big(r, X^{s,x;u}_r, u_r, V^{l,u}, Z^{l,u}, K^{l,u}\big) + (\mathcal{L}^u_l - \mathcal{L}^u)V^{l,u}.
		\end{equation}
		Applying standard stability estimates for Lipschitz BSDEs directly yields:
		\begin{equation}
			\begin{aligned}
				|V^{l,u}(s, x) - J(s, x; u)| &\le C\mathbb{E}\bigg[ |h_l(X^{s,x;u}_T) - h(X^{s,x;u}_T)| \\
				&\quad + \int_s^T \Big( \big|f_l(r, X^{s,x;u}_r, u_r, V^{l,u}, Z^{l,u}, K^{l,u}) \\
				&\quad - f(r, X^{s,x;u}_r, u_r, V^{l,u}, Z^{l,u}, K^{l,u})\big| \\
				&\quad + \big|(\mathcal{L}^u_l - \mathcal{L}^u)V^{l,u}\big| \Big) dr \;\Big|\; \mathcal{F}_s \bigg].
			\end{aligned}
		\end{equation}
		Decomposing the generator residual, the drift and diffusion differences scale proportionally with $\delta\Lambda_l$ and the linear growth of the coefficients. For the non-local jump operator, applying the mean value theorem and Cauchy-Schwarz inequality yields:
		\begin{equation}
			\begin{aligned}
				|(\mathcal{L}^u_l - \mathcal{L}^u)V^{l,u}(x)| &\le \int_E \sup_{\xi \in \mathcal{N}(x,e)} \|D^2 V^{l,u}(\xi)\| \cdot (|g_l| + |g|) \cdot |g_l - g| \nu(de) \\
				&\le \sup_{\xi \in \mathcal{N}(x,e)} \|D^2 V^{l,u}(\xi)\| \bigg( \int_E 2(|g_l|^2 + |g|^2)\nu(de) \bigg)^{\frac{1}{2}} \\
				&\quad \cdot \bigg( \int_E |g_l - g|^2 \nu(de) \bigg)^{\frac{1}{2}}.
			\end{aligned}
		\end{equation}
		By the linear growth of $g$, the integral involving squared magnitudes is bounded by $C(1+|x|^2)$, while the $L^2(\nu)$-norm of the coefficient difference is controlled by $\delta\Lambda_l$. Combining the residuals for drift, diffusion, and jumps, we obtain the total operator discrepancy:
		\begin{equation}
			|(\mathcal{L}^u_l - \mathcal{L}^u)V^{l,u}(x)| \le C\delta\Lambda_l \big( |DV^{l,u}(x)| + (1 + |x|) \sup_{\xi} \|D^2 V^{l,u}(\xi)\| \big).
		\end{equation}
		By the definition of $C_V$, the following pointwise bound holds along the trajectory:
		\begin{equation}
			|(\mathcal{L}^u_l - \mathcal{L}^u)V^{l,u}(r, X^{s,x;u}_r)| \le C_V \delta\Lambda_l(r) \cdot \big(w_p(X^{s,x;u}_r)\big)^{-1}.
		\end{equation}
		
		Under Assumption \ref{assum:coefficients}, standard moment bounds yield $\mathbb{E}[ w_p(X^{s,x;u}_r)^{-1} | \mathcal{F}_s ] \le C(1 + |x|^p) = C w_p(x)^{-1}$. Substituting this into the stability estimate gives:
		\begin{equation}
			V^{l,u}(s,x) - J(s,x; u) \le C w_p(x)^{-1} \mathbb{E}\left[ \delta h_l(X^{s,x;u}_T) + \int_s^T \big( \delta f_l(r) + C_V \delta\Lambda_l(r) \big) dr \;\Big|\; \mathcal{F}_s \right].
		\end{equation}
		
		Recognizing the expectation term as the solution to the auxiliary BSDE \eqref{eq:bounding_BSDE} with vanishing martingale parts, the pointwise bound simplifies directly to:
		\begin{equation}
			V^{l,u}(s,x) - J(s,x; u) \le \tilde{C} w_p(x)^{-1} Y^l_s, \quad \mathbb{P}\text{-a.s.}
		\end{equation}
		
		By the triangle inequality and the previous pointwise bound, we have:
		\begin{equation}
			|\hat{J}^u_l(s, x) - J(s, x; u)| \le |V^{l,u}(s, x) - J(s, x; u)| + Y^l_s w_p(x)^{-1} \le (\tilde{C} + 1) Y^l_s w_p(x)^{-1}.
		\end{equation}
		Multiplying by $w_p(x)$ and taking the supremum over $x \in \mathbb{R}^n$ yields:
		\begin{equation}
			\sup_{x \in \mathbb{R}^n} |\hat{J}^u_l(s, x) - J(s, x; u)| w_p(x) \le (\tilde{C} + 1) Y^l_s.
		\end{equation}
		Since $\lim_{l \to \infty} \|Y^l\|_{\mathcal{S}^2(\mathbb{R})} = 0$, the global weighted convergence follows immediately. For the subsolution inequality, by construction,
		\begin{equation}
			\begin{aligned}
				-\partial_s \hat{J}^u_l &= -\partial_s V^{l,u} - \partial_s Y^l_s w_p(x)^{-1} \\
				&= F^{l,u}(V^{l,u}, Z^{l,u}, K^{l,u}) + \big(\delta f_l(s) + C_V \delta\Lambda_l(s)\big)w_p(x)^{-1}.
			\end{aligned}
		\end{equation}
		Because $F^{l,u}$ uses the mollified coefficients and no infimum, the error bounds give
		\[
		F^{l,u}(\hat{J}^u_l, Z^{l,u}, K^{l,u}) \le F^{l,u}(V^{l,u}, Z^{l,u}, K^{l,u}) + L_y Y^l_s w_p(x)^{-1},
		\]
		and therefore
		\[
		-\partial_s \hat{J}^u_l - F^{l,u}(\hat{J}^u_l, Z^{l,u}, K^{l,u}) \ge \big(\delta f_l(s) + C_V \delta\Lambda_l(s) - L_y Y^l_s\big) w_p(x)^{-1}.
		\]
		Taking the essential limit inferior as $(s,x) \to (t^+,y)$ preserves the inequality, and the right-hand side tends to $0$ with $l$. This yields the desired property.
	\end{proof}
	
	To localize the maximum point against the $\mathcal{O}(|x|^p)$ growth of the value functions, we introduce the strictly convex penalty function $\chi \in C^\infty(\mathbb{R}^n)$:
	\begin{equation}
		\chi(x) := \big(1 + |x|^2\big)^{\frac{p+2}{2}} - 1. \label{penalty}
	\end{equation}
	Direct differentiation yields the following bounds:
	\begin{align}
		|D\chi(x)| &= (p + 2)|x|\big(1 + |x|^2\big)^{\frac{p}{2}} \le C\big(1 + |x|^{p+1}\big), \\
		D^2\chi(x) &= (p + 2)\Big[\big(1 + |x|^2\big)^{\frac{p}{2}} I + p\big(1 + |x|^2\big)^{\frac{p-2}{2}} xx^\top\Big] \ge (p + 2)\big(1 + |x|^2\big)^{\frac{p}{2}} I.
	\end{align}
	For any $\varepsilon > 0$, the penalty term $-\varepsilon\chi(x - \bar{x})$ strictly dominates the polynomial growth of $w - \hat{J}^u_l$. Indeed, by Lemma \ref{lem:value_func_prop}(i) and the construction of $\hat{J}^u_l$, we have $|w(t,x)| + |\hat{J}^u_l(t,x)| \le C(1+|x|^p)$ uniformly in $t$. Since $\chi(x-\bar{x}) \ge \frac12|x|^{p+2} - C_{\bar{x}}$ for large $|x|$, the difference
	\[
	w(t,x) - \hat{J}^u_l(t,x) - \varepsilon\chi(x-\bar{x}) \le C(1+|x|^p) - \varepsilon\big(\tfrac12|x|^{p+2} - C_{\bar{x}}\big) \to -\infty
	\]
	as $|x|\to\infty$, uniformly in $\omega$ on the full-measure set where the growth bounds hold. Consequently, the global maximum of this functional is attained $\mathbb{P}$-a.s. within a compact ball $B_R(\bar{x})$ whose radius $R = R(\varepsilon,\kappa,\omega)$ depends only on $\varepsilon$, $\kappa$, and the growth constants, and not on the particular realization of $\omega$ outside a null set. This compactness is essential for the application of the stochastic Ishii's lemma.
	
	\begin{theorem} \label{thm:comparison_principle}
		Let Assumption \ref{assum:coefficients} and Assumption \ref{assum:recursive_cost} hold. Let $w \in \mathcal{T}^p$ be a stochastic viscosity subsolution of the BSHJB equation \cref{eq:BSHJB_compact}. It holds $\mathbb{P}\text{-a.s.}$ that $w(t, x) \le V(t, x)$ for any $(t, x) \in [0, T] \times \mathbb{R}^n$, where $V$ is the value function.
	\end{theorem}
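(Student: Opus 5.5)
The argument compares $w$ with the smooth supersolution-type objects $\hat{J}^u_l$ of \cref{lem:patched_subsolution} for a fixed control $u$, then lets $l\to\infty$ and takes the essential infimum over $u$. Since $\hat{J}^u_l \to J(\cdot,\cdot;u)$ in $\mathcal{S}^2(L^\infty_{w_p})$, it suffices to prove $w \le \hat{J}^u_l + \text{(error vanishing in } l)$. Then \cref{def:value_function} and \cref{lem:deterministic_reduction} give
\[
w(t,x)\le \operatorname*{ess\,inf}_u J(t,x;u)=V(t,x).
\]
Continuity of $w$ and $V$ in $(t,x)$, via \cref{lem:value_func_prop}, upgrades the statement from rational points to all $(t,x)$ outside a single null set.

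\textbf{Step 1 (contradiction set-up).} Fix $u$ and $l$. Suppose that for some $\bar t$, $\bar x$ and $\kappa>0$ the event
\[
A:=\{(w-\hat{J}^u_l)(\bar t,\bar x)>\kappa\}
\]
has positive probability. Introduce
\[
\Phi(t,x):=w(t,x)-\hat{J}^u_l(t,x)-\varepsilon\chi(x-\bar x)-\varepsilon'(T-t)^{-1}\ \ \text{or}\ \ \Phi:=w-\hat{J}^u_l-\varepsilon\chi(x-\bar x)+\lambda(t-\bar t),
\]
with the penalty $\chi$ from \eqref{penalty}. By the growth discussion preceding the theorem, $\sup_x\Phi(t,x)$ is attained in a ball $B_R(\bar x)$. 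Consider the process $M_t:=\sup_x\Phi(t,x)$ and the stopping time $\tau$ at which its conditional expectation (a Snell-envelope-type object, $\operatorname*{ess\,sup}_{\bar\tau}\mathbb{E}[M_{\bar\tau}\mid\mathcal{F}_{\bar t}]$) is first attained. At the terminal time, $w(T,\cdot)\le h$ and $\hat{J}^u_l(T,\cdot)=h_l+Y^l_Tw_p^{-1}\ge h$ because $Y^l_T=\delta h_l$. Hence $M_T\le 0$, so on $A$ (for small $\varepsilon$) the optimal stopping time satisfies $\tau<T$ with positive conditional probability. A measurable selection gives an $\mathcal{F}_\tau$-measurable maximizer $\xi\in B_R(\bar x)$.

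\textbf{Step 2 (test function).} Set
\[
\varphi:=\hat{J}^u_l+\varepsilon\chi(\cdot-\bar x)-\lambda(\cdot-\bar t)+c,
\]
with $c$ the $\mathcal{F}_\tau$-measurable constant making $(\varphi-w)(\tau,\xi)=0$. Because $\hat{J}^u_l\in\mathcal{T}^p$ and $\chi$ is smooth, $\varphi$ is a semimartingale field. The factor $\chi$ has growth $p+2$ rather than $p$, so I would truncate $\chi$ outside $B_{2R}$, keeping it dominant there, so that $\varphi\in\mathcal{T}^p$. The optimal-stopping construction is exactly what makes the global condition in \cref{def:tangency_spaces} hold, so $\varphi\in\mathcal{G}^+_w(\tau,\xi;A')$. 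The subsolution property (\cref{def:viscosity_solution}) then yields $\liminf\mathbb{E}[\alpha-F^\varphi\mid\mathcal{F}_\tau]\le0$ along some sequence $(s_n,x_n)\to(\tau,\xi)$.

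\textbf{Step 3 (contradiction).} On the other side, \cref{lem:patched_subsolution} gives $\alpha^{\hat J}-F^{l,u}(\hat{J}^u_l,\dots)\ge o_l(1)$, and $F^{l,u}$ dominates $F$ up to errors absorbed in $Y^l$. I would compare $F^\varphi$ with $F^{l,u}$ evaluated at $\hat J^u_l$. Monotonicity of $f$ in $k$ (Assumption~\ref{assum:recursive_cost}(iv)) together with the global maximum property makes the nonlocal terms $I_\varphi$ and $L$ ordered correctly. The extra terms coming from $\varepsilon\chi$ are $O(\varepsilon(1+R^{p+2}))$ on the ball. The drift shift $\lambda$ contributes $+\lambda$, so one obtains $\alpha-F^\varphi\ge\lambda-C\varepsilon R^{p+2}-o_l(1)$. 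Choosing $\lambda$ first, then $\varepsilon$ small, then $l$ large contradicts Step 2.

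The \textbf{main obstacle} is Step 1–2: guaranteeing that the global tangency in the conditional-expectation/optimal-stopping sense of $\mathcal{G}^+$ holds. This needs a measurable maximizer $\xi$, the observation that $\tau>\bar t$ is not needed but $\hat\tau>\tau$ is, and control of the dependence of $R$ on $\varepsilon$ so that the $\varepsilon\chi$ error still vanishes. If the direct Snell-envelope argument fails, my fallback is the paper's hinted route of localized bounding envelopes with backward induction on small time intervals $[t_{k},t_{k+1}]$, running the argument from $T$ backward with $V\ge w$ propagated interval by interval.
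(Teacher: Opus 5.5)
Your outline follows the paper's: contradiction against $\hat J^u_l$, the penalty $\varepsilon\chi$, and optimal stopping to produce a tangent test function. There is, however, a genuine gap, and it is not where you expect. The tangency you flag as the main obstacle is actually the easy part. Take $M_s=\sup_x\Phi(s,x)$, let $\tau$ be the first time its Snell envelope meets $M$, and set $c=M_\tau$. Then $\inf_y(\varphi-w)(s,y)=c-M_s$ for $s\ge\tau$, so the Snell property $M_\tau\ge\mathbb{E}[M_{\bar\tau}\mid\mathcal{F}_\tau]$ is precisely the condition defining $\mathcal{G}^+_w(\tau,\xi;A')$, up to making $\varphi$ adapted and of growth $p$.

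The argument breaks in Step~3. Since $f$ is only Lipschitz in $y$ (Assumption~\ref{assum:recursive_cost}(ii)), not monotone, the $y$-slot of $F^\varphi$ is shifted relative to $F^{l,u}$ at $\hat J^u_l$. The shift is $\varphi-\hat J^u_l=\varepsilon\chi(\cdot-\bar x)-\lambda(\cdot-\bar t)+c$, which at the contact point equals $(w-\hat J^u_l)(\tau,\xi)$. So, e.g.\ when $f$ is increasing in $y$, the correct bound near $(\tau,\xi)$ is $\alpha-F^\varphi\ge\lambda-L_y(w-\hat J^u_l)(\tau,\xi)-C\varepsilon R^{p+2}-o_l(1)$. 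The new term is the gap itself: its $\mathcal{F}_{\bar t}$-conditional mean on $A$ is at least $\kappa-\lambda(T-\bar t)$, and pathwise it can be much larger.

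You cannot beat this term by taking $\lambda$ large. With the shift $\lambda(t-\bar t)$ you only have $M_T\le\lambda(T-\bar t)$, not $M_T\le0$, so a contact before $T$ requires $\lambda(T-\bar t)<\kappa$. Put differently, the argument can only ever give $w-\hat J^u_l\le\lambda(T-t)$, so $\lambda$ must be sent to $0$. Your other option, $-\varepsilon'(T-t)^{-1}$, puts $+\varepsilon'(T-s)^{-1}$ into $\varphi$; its contribution $-\varepsilon'(T-s)^{-2}$ to $\alpha$ has the wrong sign for a terminal-value problem.

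What is missing is a Gronwall-type mechanism that absorbs $L_y$ times the gap, and this is exactly what the paper's proof adds. Instead of a linear Snell envelope and a frozen constant, it uses the reflected BSDE $Y$ with generator $g(y,z,k)=L_yy+L_z|z|+L_k\|k\|_{L^2(\nu)}$ and obstacle $\Theta$, and it puts the \emph{process} $Y_s$ into the test function $\phi$. Because $Y_s\ge\Theta_s$ dominates the linear term, $|\phi-\hat J^u_l|\le\varepsilon\chi+Y_s$. The drift $g(Y_s,Z^Y_s,K^Y_s)$ of $Y$ then cancels exactly the error $L_yY_s+L_z|Z^Y_s|+L_k\|K^Y_s\|_{L^2(\nu)}$ produced in $F^\phi$. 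What remains is the gain $\alpha/(2(T-t))\ge\kappa/(2T)$, which does not depend on $\varepsilon$. One caveat on the paper's version: to be sure the reflected solution touches before $T$, that slope should carry an extra factor $e^{-L_y(T-t)}$. Otherwise the unreflected value $\tfrac{\alpha}{2}e^{L_y(T-t)}$ may already exceed $\Theta_t=\alpha$, and nothing forces $\tau<T$.

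A second, smaller gap: your order of quantifiers ($\lambda$, then $\varepsilon$, then $l$) needs $\varepsilon R(\varepsilon)^{p+2}\to0$, which you leave open. Since $w-\hat J^u_l$ may grow like $|x|^p$, the maximizers can sit at $|x|\sim\varepsilon^{-1/2}$, where that term is of order $\varepsilon^{-p/2}$. The paper simply asserts that the maximizers stay in a ball $B_R(\bar x)$ with $R$ independent of $\varepsilon$. To make this step independent of $R$, use the penalty $\varepsilon e^{\mu(T-s)}\big(1+\chi(x-\bar x)\big)$ instead. Its time derivative adds $\mu\varepsilon e^{\mu(T-s)}(1+\chi)$ to the drift $\alpha$ of the test function. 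Every term it generates in $F^\varphi$ (drift, diffusion and jump parts, and the $y$-, $z$-, $k$-slots via $L_y,L_z,L_k$) is bounded by $C\varepsilon e^{\mu(T-s)}(1+\chi)$, uniformly in $u$, by the same computation as in the proof of \cref{lem:lyapunov_condition}. For $\mu\ge C$ the penalty then only helps, and the contradiction rests on the $\varepsilon$-independent gain alone.
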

	
	\begin{proof}
		We argue by contradiction. Suppose there exists a point $(t, \bar{x}) \in [0, T) \times \mathbb{R}^n$ such that $w(t, \bar{x}) > \mathbb{V}(t, \bar{x})$ with positive probability. By the approximation established in \cref{lem:patched_subsolution}, we can select a control $u \in \mathcal{U}$ and an integer $l \in \mathbb{N}^+$ such that:
		\begin{equation}
			\mathbb{P} \Big( w(t, \bar{x}) - \hat{J}^u_l(t, \bar{x}) > \kappa \Big) > 0, \quad \text{for some } \kappa > 0.
		\end{equation}
		
		For any $\varepsilon \in (0, 1)$, we penalize the difference and utilize the measurable selection theorem to find an $\mathcal{F}_t$-measurable random variable $\xi_t$ such that:
		\begin{equation}
			\alpha := w(t, \xi_t) - \hat{J}^u_l(t, \xi_t) - \varepsilon\chi(\xi_t - \bar{x}) = \max_{x \in \mathbb{R}^n} \Big[ w(t, x) - \hat{J}^u_l(t, x) - \varepsilon\chi(x - \bar{x}) \Big] \ge \kappa, \quad \mathbb{P}\text{-a.s.}
		\end{equation}
		Since the penalty $\chi$ has growth order $p+2 > p$, the maximum is attained a.s. within a compact ball $B_R(\bar{x})$ uniformly in $\varepsilon$, with $\xi_t \to \bar{x}$ as $\varepsilon \to 0$. For $s \in (t, T]$, we select an $\mathcal{F}_s$-measurable maximizer $\xi_s$ satisfying:
		\begin{equation}
			\Big( w(s, \xi_s) - \hat{J}^u_l(s, \xi_s) - \varepsilon\chi(\xi_s - \bar{x}) \Big)^+ = \max_{x\in\mathbb{R}^n} \Big( w(s, x) - \hat{J}^u_l(s, x) - \varepsilon\chi(x - \bar{x}) \Big)^+.
		\end{equation}
		
		We introduce the bounding generator $g$ based on the Lipschitz constants $L_y, L_z, \allowbreak L_k$ of $f$:
		\begin{equation}
			g(y, z, k) := L_y y + L_z|z| + L_k\|k\|_{L^2(\nu)}.
		\end{equation}
		
		Let $(Y, Z^Y, K^Y)$ be the unique solution to the reflected BSDE (RBSDE) with generator $g$ and lower obstacle $\Theta$:
		\begin{equation}
			\Theta_s := \Big( w(s, \xi_s) - \hat{J}^u_l(s, \xi_s) - \varepsilon\chi(\xi_s - \bar{x}) \Big)^+ + \frac{\alpha(s - t)}{2(T - t)}.
		\end{equation}
		Define the optimal stopping time $\tau := \inf\{s \ge t : Y_s = \Theta_s\}$. On $[t, \tau]$, $Y$ satisfies the unreflected BSDE:
		\begin{equation}
			dY_s = -g(Y_s, Z^Y_s, K^Y_s)ds + Z^Y_s dW_s + K^Y_s d\tilde{\mu}.
		\end{equation}
		By the comparison principle, $Y_t \ge \Theta_t = \alpha$. Since $Y_T = \Theta_T = \alpha/2 < \alpha$, we inherently have $\mathbb{P}(\tau < T) > 0$ and $Y_\tau = \Theta_\tau \ge \alpha/2$. Furthermore, define the stopping time $\hat{\tau} := \inf\Big\{s \ge \tau : \Theta_s \le \frac{\alpha(s- t)}{2(T- t)}\Big\}$, which is the first time after $\tau$ when the penalty-compressed error reduces to the linear baseline; by continuity $\Theta_\tau > \frac{\alpha(\tau-t)}{2(T-t)}$, so $\hat{\tau} > \tau$ a.s., i.e.\ $\hat{\tau} \in \mathcal{T}^+_\tau$, and $\mathbb{P}(\tau < \hat{\tau}) > 0$. Define the test function $\phi \in C^{1,2}_{\mathbb{F}}$ on $[t, \tau]$:
		\begin{equation}
			\phi(s, x) := \hat{J}^u_l(s, x) + \varepsilon\chi(x - \bar{x}) - \frac{\alpha(s - t)}{2(T - t)} + Y_s,
		\end{equation}
		where the dynamics of $\hat{J}^u_l$ are given by:
		\begin{equation}
			d\hat{J}^u_l(s, x) = \partial_s \hat{J}^u_l(s, x)ds + Z^{l,u}(s, x)dW_s + K^{l,u}(s, x)d\tilde{\mu}.
		\end{equation}
		
		By the obstacle condition, $(w - \phi)$ attains its maximum of $0$ at $(\tau, \xi_\tau)$. Applying the viscosity subsolution property of $w$ at this point yields:
		\begin{equation}
			0 \ge \liminf_{(s,x) \to (\tau^+,\xi_\tau)} \mathbb{E}\Big[ -\partial_s\phi(s, x) - F(s, x, \phi, Z^\phi, K^\phi) \;\Big|\; \mathcal{F}_\tau \Big].
		\end{equation}
		Direct computation gives $-\partial_s\phi = -\partial_s \hat{J}^u_l + \frac{\alpha}{2(T-t)} + g(Y_s, Z^Y_s, K^Y_s)$ alongside
		\begin{equation*}
			(Z^\phi, K^\phi) = (Z^{l,u} + Z^Y_s, K^{l,u} + K^Y_s).
		\end{equation*}
		Using the Lipschitz continuity of $f$ and the bound $F \le F^{l,u}$, we obtain:
		\begin{equation}
			\begin{aligned}
				f(s, x, \phi, Z^\phi, K^\phi) &\le f(s, x, \hat{J}^u_l, Z^{l,u}, K^{l,u}) \\
				&\quad + L_y\big(\varepsilon\chi(x - \bar{x}) + Y_s\big) + L_z|Z^Y_s| + L_k\|K^Y_s\|_{L^2(\nu)}.
			\end{aligned}
		\end{equation}
		By the polynomial growth of $\chi$, the nonlocal terms evaluated at $\phi$ deviate from those at $\hat{J}^u_l$ by at most $O(\varepsilon)$. Substituting $\phi$ into the viscosity inequality and applying the subsolution condition $-\partial_s \hat{J}^u_l - F^{l,u} \ge 0$ (\cref{lem:patched_subsolution}), the $(Y_s, Z^Y_s, K^Y_s)$ terms exactly cancel with $g$ while the $O(\varepsilon)$ difference is absorbed by $\varepsilon C_R$, leaving:
		\begin{equation}
			0 \ge \frac{\alpha}{2(T - t)} + \operatorname*{ess\,\liminf}_{(s,x) \to (\tau^+,\xi_\tau)} \mathbb{E}\Big[ -\varepsilon(L^u\chi + L_y\chi) \;\Big|\; \mathcal{F}_\tau \Big].
		\end{equation}
		Recall from the operator expansion:
		\begin{equation}
			L^u\chi + L_y\chi = \langle b, D\chi \rangle + \frac{1}{2}\operatorname{Tr}(\sigma\sigma^\top D^2\chi) + \int_E \Big[ \chi(x+g) - \chi(x) - \langle D\chi, g \rangle \Big]\nu(de) + L_y\chi.
		\end{equation}
		The $(p+2)$-th order growth of $\chi$ confines the maximizer $\xi_\tau$ to a bounded ball $B_R(\bar{x})$, where the radius $R$ is independent of $\varepsilon$. Since the coefficients satisfy the linear growth conditions (Assumption \ref{assum:coefficients}) and $\xi_\tau \in B_R(\bar{x})$, there exists a constant $C_R > 0$ depending only on $R$ such that:
		\begin{equation}
			|L^u\chi(\xi_\tau) + L_y\chi(\xi_\tau)| \le C_R, \quad \mathbb{P}\text{-a.s.}
		\end{equation}
		Recall that $\alpha \ge \kappa > 0$ and $\tau \le T$. Substituting the bound yields:
		\begin{equation}
			0 \ge \frac{\alpha}{2(T - t)} - \varepsilon C_R \ge \frac{\kappa}{2T} - \varepsilon C_R.
		\end{equation}
		Choosing $\varepsilon \in \big(0, \frac{\kappa}{2 T C_R}\big)$ yields a strict contradiction. Therefore, our initial assumption is false, which implies $w(t, \bar{x}) \le \mathbb{V}(t, \bar{x})$.
	\end{proof}
	
	\begin{corollary} \label{cor:weak_comparison}
		Let Assumption \ref{assum:coefficients} and Assumption \ref{assum:recursive_cost} hold. Let $u \in \mathcal{T}^p$ be a stochastic viscosity subsolution (resp. supersolution) of the BSHJB equation \cref{eq:BSHJB_compact}. Suppose $\phi \in C^{1,2}_{\mathbb{F}} \cap \mathcal{T}^p$ satisfies $\phi(T, x) \ge$ (resp. $\le$) $h(x)$ on $\mathbb{R}^n$, and:
		\begin{equation}
			\operatorname*{ess\,\liminf}_{(s,x) \to (t^+,y)} \mathbb{E}\Big[ -\partial_s\phi(s, x) - F\big(s, x, \phi, Z^\phi, K^\phi\big) \;\Big|\; \mathcal{F}_t \Big] \ge 0, \quad \mathbb{P}\text{-a.s.}
		\end{equation}
		(resp. $\operatorname*{ess\,\limsup}_{(s,x) \to (t^+,y)} \mathbb{E}\Big[ -\partial_s\phi(s, x) - F\big(s, x, \phi, Z^\phi, K^\phi\big) \;\Big|\; \mathcal{F}_t \Big] \le 0$)
		for all $(t, y) \in [0, T) \times \mathbb{R}^n$. Then, $u \le \phi$ (resp. $u \ge \phi$) on $[0, T] \times \mathbb{R}^n$, $\mathbb{P}\text{-a.s.}$
	\end{corollary}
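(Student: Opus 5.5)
The plan is to repeat the proof of \cref{thm:comparison_principle} with the smooth patched function $\hat{J}^u_l$ replaced by the given $\phi$. The structure of that proof only used two facts about $\hat{J}^u_l$: it lies in $\mathcal{T}^p \cap C^{1,2}_{\mathbb{F}}$, and it satisfies the one-sided drift inequality of \cref{lem:patched_subsolution}. Both are now hypotheses on $\phi$. We treat the subsolution case; the supersolution case is symmetric, with maxima replaced by minima, $\Theta$ built from $(\phi - u)^+$, and the ess\,limsup inequality used in place of the ess\,liminf one.

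\textbf{Step 1 (contradiction setup).} Suppose $\mathbb{P}(u(t,\bar{x}) - \phi(t,\bar{x}) > \kappa) > 0$ for some $(t,\bar{x}) \in [0,T) \times \mathbb{R}^n$ and some $\kappa > 0$. The case $t = T$ is excluded by $u(T,\cdot) \le h \le \phi(T,\cdot)$.

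\textbf{Step 2 (localization).} Penalize with $\varepsilon\chi(\cdot - \bar{x})$ from \eqref{penalty}. Since both $u$ and $\phi$ have growth of order $p$ while $\chi$ grows like $|x|^{p+2}$, a measurable selection gives $\mathcal{F}_s$-measurable maximizers $\xi_s$ of $u(s,\cdot) - \phi(s,\cdot) - \varepsilon\chi(\cdot - \bar{x})$. These maximizers lie in a ball $B_R(\bar{x})$, and $R$ is independent of $\varepsilon$ once $\varepsilon \le 1$. We work on the event where the maximum value $\alpha$ at time $t$ is at least $\kappa$; on its complement the indicator is simply restricted away.

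\textbf{Step 3 (RBSDE and tangency).} Let $\Theta_s = (\max_x [u - \phi - \varepsilon\chi])^+ + \alpha(s-t)/(2(T-t))$, and solve the RBSDE with generator $g(y,z,k) = L_y y + L_z|z| + L_k\|k\|_{L^2(\nu)}$ and lower obstacle $\Theta$. Define $\tau$ as the first hitting time of the obstacle and $\hat{\tau}$ as before. The terminal condition forces $\Theta_T = \alpha/2 < \alpha \le Y_t$, so $\tau < T$ with positive probability. Set
\[
\psi(s,x) = \phi(s,x) + \varepsilon\chi(x - \bar{x}) - \frac{\alpha(s-t)}{2(T-t)} + Y_s .
\]
This $\psi$ lies in $\mathcal{T}^p$ after the usual truncation of $\chi$ outside $B_{2R}$; this is needed because $\chi$ has growth $p+2$, and the truncation is harmless since the contact is global but the maximizer stays in $B_R$.

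\textbf{Step 4 (viscosity inequality).} By construction $\psi \in \mathcal{G}^+_u(\tau,\xi_\tau;\{\tau < T\})$. Apply \cref{def:viscosity_solution} to $u$ at this point. Expand $F^\psi$ using the Lipschitz property of $f$ in $(y,z,k)$ and the monotonicity of $f$ in $k$. The $(Y,Z^Y,K^Y)$ contributions cancel against $g$. What remains is
\[
0 \ge \frac{\alpha}{2(T-t)} + \liminf \mathbb{E}\big[-\partial_s\phi - F^\phi \,\big|\, \mathcal{F}_\tau\big] - \varepsilon C_R .
\]
The hypothesis on $\phi$ kills the middle term, so $0 \ge \kappa/(2T) - \varepsilon C_R$. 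This is a contradiction for small $\varepsilon$.

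\textbf{Main obstacle.} The delicate point is controlling the nonlocal part of $F^\psi - F^\phi$ coming from $\varepsilon\chi$. The jump term $\int_E[\chi(x+g) - \chi(x) - \langle D\chi, g\rangle]\nu(de)$ evaluates $\chi$ at points $x + g$ outside $B_R$. I would bound it by a second-order Taylor expansion combined with the moment bounds on $\rho$ from Assumption \ref{assum:coefficients}(iii), which give $\int_E \rho^q\,d\nu < \infty$. This yields a bound $C_R$ for $x$ near $\xi_\tau$, uniform in $\varepsilon$.

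A second technical point is that the $F$ appearing in the hypothesis must agree with the $F^\phi$ appearing in the definition, with $(Z^\phi,K^\phi) = (\beta,\gamma)$. This is a matter of notation. Finally, the passage to the liminf along the sequence $(s_n,x_n)$ given by the definition is compatible with the ess\,liminf over $(s,x) \to (\tau^+,\xi_\tau)$ in the hypothesis, since $s_n \downarrow \tau$ and $x_n \to \xi_\tau$.
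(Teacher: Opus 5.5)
Your route is the one the paper has in mind: the corollary is stated without proof right after Theorem~\ref{thm:comparison_principle}, and you rerun that argument with $\phi$ in place of $\hat{J}^u_l$. However, the step that closes the argument does not hold. The proof of Theorem~\ref{thm:comparison_principle} asserts the same $\varepsilon$-independent radius, so the flaw is inherited.

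The claim that the maximizers of $u-\phi-\varepsilon\chi(\cdot-\bar{x})$ stay in $B_R(\bar{x})$ with $R$ independent of $\varepsilon\le 1$ is backwards, because decreasing $\varepsilon$ weakens the penalty. At a maximizer with nonnegative value you only know $\varepsilon\chi(\xi_s-\bar{x})\le (u-\phi)(s,\xi_s)\le C(1+|\xi_s|^p)$, i.e.\ $|\xi_s-\bar{x}|\lesssim\varepsilon^{-1/2}$. Nothing better follows from the growth bounds (think of $(u-\phi)(s,\cdot)\approx c|x|^p$).

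The jump-diffusion coefficients $b,\sigma,g$ have only linear growth. Hence $\mathcal{L}^u\chi$, and the other contributions of $\varepsilon\chi$ to $F^\psi$ through the $y$-, $z$- and $k$-slots of $f$, are in general of the same order $|x|^{p+2}$ as $\chi$ itself. So $C_R$ grows like $R^{p+2}$, and $\varepsilon C_R$ is of order $\varepsilon^{-p/2}$ rather than $o(1)$. The inequality $0\ge \kappa/(2T)-\varepsilon C_R$ then yields no contradiction.

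The missing idea is to replace smallness by a sign. Use the penalty $P(s,x)=\varepsilon e^{\lambda(T-s)}(1+|x-\bar{x}|^2)^{(p+2)/2}$. Prove a Lyapunov-type bound as in Lemma~\ref{lem:lyapunov_condition}, with exponent $p+2$ and using the moments of $\rho$, showing that adding $P$ to the test function raises $F$ by at most $C_0P$ uniformly in $u$. Then take $\lambda\ge C_0$, so that $-\partial_sP=\lambda P$ absorbs this increase. The parameter $\varepsilon$ is then needed only to keep $\alpha\ge\kappa-\varepsilon e^{\lambda T}>0$.

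Several further steps also need repair.

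(i) The RBSDE generator $g$ contains $L_yy$, so the inference ``$\Theta_T=\alpha/2<\alpha\le Y_t$, hence $\tau<T$ with positive probability'' is invalid. If the obstacle is not touched before $T$, then $Y_s=\tfrac{\alpha}{2}e^{L_y(T-s)}$, which exceeds $\alpha$ at $s=t$ as soon as $L_y(T-t)>\ln 2$. Lowering the final value of the linear baseline to $\tfrac{\alpha}{2}e^{-L_y(T-t)}$ repairs this.

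(ii) Right after $\tau$, exactly where the viscosity inequality is evaluated, the RBSDE solution carries its reflection measure $dA$. So $\psi$ need not have an absolutely continuous drift. On $[\tau,T]$, replace $Y$ by the forward solution of $d\tilde{Y}_s=-g(\tilde{Y}_s,Z^Y_s,K^Y_s)\,ds+Z^Y_s\,dW_s+\int_E K^Y_s(e)\,\tilde{\mu}(ds,de)$ with $\tilde{Y}_\tau=Y_\tau$. Then $\tilde{Y}_s-Y_s=\int_\tau^s e^{-L_y(s-r)}\,dA_r\ge 0$, so both global tangency and the cancellation against $g$ survive.

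(iii) Membership $\psi\in\mathcal{T}^p$ is not only a matter of truncating $\chi$. The tangency in Definition~\ref{def:tangency_spaces} is global, so the truncated penalty must still dominate $u-\phi$ outside the ball; where the maximizer sits is irrelevant. Moreover, Definition~\ref{def:test_space}(iii) requires bounds uniform in $(t,\omega)$ on the drift and diffusion coefficients of $\psi$. These contain $g(\tilde{Y},Z^Y,K^Y)$ and $Z^Y$, which need not be bounded.

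(iv) The hypothesis on $\phi$ is stated at deterministic $(t,y)$ with conditioning on $\mathcal{F}_t$. Using it at the random point $(\tau,\xi_\tau)$, conditioned on $\mathcal{F}_\tau$ and with $x_n\to\xi_\tau$ only in probability, needs a separate argument.
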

	
	\subsection{Uniqueness under Super-Parabolicity}
	
	To establish uniqueness via the weak comparison principle, we assume that the controlled diffusion coefficient $\sigma(t, x, u)$ is deterministic (super-parabolicity). Conditioning on the driving noise then locally reduces the BSHJB equation to a parametrized deterministic integro-PDE on each small stochastic interval.
	
	\begin{lemma} \label{lem:cylinder_approx}
		Suppose Assumptions \ref{assum:coefficients} and \ref{assum:recursive_cost} hold. For any $\varepsilon > 0$, there exist a partition $0 = t_0 < t_1 < \dots < t_N = T$, a finite-dimensional noise projection $H^N_t$ capturing the Brownian and Poisson histories up to time $t$, and smooth approximations $(h^N, f^N, b^N, g^N) \in C^\infty$ with respect to $(H^N, x)$ that preserve the Lipschitz and growth conditions of the original coefficients. Denote by $\mathcal{P}^N_t$ the $\sigma$-algebra generated by $H^N_t$. Define the weighted error processes:
		\begin{align}
			\delta h^\varepsilon &:= \operatorname*{ess\,\sup}_{x \in \mathbb{R}^n} \big|h^N(H^N_T, x) - h(x)\big|w_p(x), \\
			\delta f^\varepsilon_t &:= \operatorname*{ess\,\sup}_{x,u,y,z,k} \big|f^N(H^N_t, t, x, u, y, z, k) - f(t, x, u, y, z, k)\big|w_p(x), \\
			\delta b^\varepsilon_t &:= \operatorname*{ess\,\sup}_{x,u} \frac{\big|b^N(H^N_t, t, x, u) - b(t, x, u)\big|}{1 + |x|}, \\
			\delta g^\varepsilon_t &:= \operatorname*{ess\,\sup}_{x,u} \frac{\|g^N(H^N_t, t, x, u, \cdot) - g(t, x, u, \cdot)\|_{L^2(\nu)}}{1 + |x|}.
		\end{align}
		These approximations satisfy the following $\mathcal{L}^2$-bound:
		\begin{equation}
			\|\delta h^\varepsilon\|_{L^2(\Omega)} + \|\delta f^\varepsilon\|_{L^2([0,T]\times\Omega)} + \|\delta b^\varepsilon\|_{L^2([0,T]\times\Omega)} + \|\delta g^\varepsilon\|_{L^2([0,T]\times\Omega)} < \varepsilon.
		\end{equation}
		Consequently, letting $\delta\Lambda^\varepsilon_t$ denote the aggregated error bounding the forward dynamics, we have $\|\delta\Lambda^\varepsilon\|_{L^2([0,T]\times\Omega)} \le C\varepsilon$.
	\end{lemma}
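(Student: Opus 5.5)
We will prove \cref{lem:cylinder_approx} by a standard cylindrical (Wiener--Poisson chaos type) approximation of predictable random fields, carried out in three stages: time discretization, finite-dimensional noise projection, and spatial/parametric smoothing. Throughout, the weighted errors are controlled pointwise in $\omega$ by the weighted sup-norms, and the $L^2$-bounds then follow by dominated convergence. The growth bounds in Assumptions \ref{assum:coefficients}--\ref{assum:recursive_cost} guarantee that each error process is dominated by an integrable constant: for instance $\delta f^\varepsilon_t \le 2C$ after weighting by $w_p$, since the $y,z,k$-dependence is removed by working with $f(\cdot,0,0,0)$ plus the Lipschitz part.

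\textbf{Step 1 (reduction to $(y,z,k)$-free and $x$-compact data).} Because $f$ is uniformly Lipschitz in $(y,z,k)$ with a deterministic constant, we will approximate $f$ in the form $f(t,x,u,y,z,k)=f(t,x,u,0,0,0)+[f(t,x,u,y,z,k)-f(t,x,u,0,0,0)]$. The supremum over $(y,z,k)$ of the second bracket is a bounded-Lipschitz family, which we approximate through a countable dense set of $(y,z,k)$ together with equicontinuity. We then truncate in $x$: on $\{|x|>R\}$, the weight $w_p$ makes the weighted error at most $C(1+|x|^p)w_p(x)\cdot$(small), after replacing the coefficient by its value composed with a smooth cutoff. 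For $b$ and $g$, the weight $(1+|x|)^{-1}$ plays the same role via linear growth. This reduces the problem to approximating the coefficients on a compact set $B_R\times U$, uniformly in $x,u$, for which we use the separability of $C(B_R\times U)$ (resp.\ of $C(B_R\times U;L^2(\nu))$).

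\textbf{Step 2 (finite-dimensional noise projection).} Each coefficient is now a predictable process with values in a separable Banach space $\mathcal{X}=C(B_R\times U)$. We first approximate it in $L^2([0,T]\times\Omega;\mathcal{X})$ by simple predictable processes constant on the intervals $(t_{i-1},t_i]$ of a partition, using martingale/Lebesgue differentiation in time. Next, since $\mathbb{F}$ is generated by $W$ and $\eta$, every $\mathcal{F}_{t_{i-1}}$-measurable $\mathcal{X}$-valued $L^2$ variable is approximated by functions of finitely many increments $W_{t_j}-W_{t_{j-1}}$ and finitely many counts $\mu((t_{j-1},t_j]\times A_m)$ for a finite partition $\{A_m\}$ of $E$. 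We do this by martingale convergence along the refining generated $\sigma$-algebras. These increments and counts define $H^N_t$, and $\mathcal{P}^N_t=\sigma(H^N_t)$. The same procedure, with $\mathcal{F}_T$ in place of $\mathcal{F}_{t_{i-1}}$, handles $h$.

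\textbf{Step 3 (smoothing).} The resulting functions are measurable in $H^N$ and continuous in $x$. We mollify them in $x$ exactly as in \eqref{eq:mollified_forward}, which preserves the Lipschitz and growth constants since $\rho_l$ is a probability density. In the Brownian components of $H^N$ we approximate by $C^\infty$ functions in $L^2$ of the Gaussian law (Lusin's theorem plus mollification); the Poisson counts are discrete, so no smoothing is needed there. Preservation of the Lipschitz constants in $x$ under the $H^N$-approximation is the main obstacle. $L^2$-density alone does not give uniform Lipschitz bounds. We plan to handle this by approximating only the finitely many ``values'' in the dense family and interpolating in $x$ with a fixed Lipschitz partition of unity, so that the $x$-Lipschitz constant is inherited from the original coefficient up to a factor that vanishes with the mesh. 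Clipping the growth by composition with a smooth truncation finishes this step. Summing the errors from the three stages and choosing parameters gives the bound $<\varepsilon$. Finally, $\delta\Lambda^\varepsilon\le C(\delta b^\varepsilon+\delta g^\varepsilon)(1+|x|)$, which after the weighted normalization yields $\|\delta\Lambda^\varepsilon\|_{L^2}\le C\varepsilon$.
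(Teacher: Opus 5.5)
Your skeleton (time-stepping, projecting onto finitely many Brownian increments and Poisson counts $\mu((t_{j-1},t_j]\times E_m)$ over a finite partition of $E$, then smoothing) is the same as in the paper's appendix. Steps~2--3 are sound on compact sets, because $C(B_R\times U)$ is separable and Bochner martingale convergence gives the $L^2$ approximation there. The gap is Step~1, which is exactly where the lemma is hard. The quantities $\delta h^\varepsilon,\delta b^\varepsilon,\delta g^\varepsilon$ are suprema over all $x\in\mathbb{R}^n$, and $\delta f^\varepsilon_t$ is in addition a supremum over all $(y,z,k)\in\mathbb{R}\times\mathbb{R}^d\times\mathbb{R}$.

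\textbf{The $(y,z,k)$-dependence.} The family $(y,z,k)\mapsto f(t,x,u,y,z,k)-f(t,x,u,0,0,0)$ is not bounded; it is only bounded by $C(|y|+|z|+|k|)$, and $w_p(x)$ gives no decay in $(y,z,k)$. So your domination $\delta f^\varepsilon_t\le 2C$ is false. A countable dense set plus equicontinuity gives uniformity only on compact sets of $(y,z,k)$.

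\textbf{The tail in $x$.} On $\{|x|>R\}$ the weights $w_p$ and $(1+|x|)^{-1}$ make the error bounded, not small. For a radial cutoff $\pi_R$, neither $|h(x)-h(\pi_R x)|\,w_p(x)$ nor $|b(x)-b(\pi_R x)|/(1+|x|)$ tends to $0$ as $|x|\to\infty$ in general, and nothing in your construction supplies the missing smallness.

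This cannot be patched within your scheme. For the projections you use (finitely many Brownian increments and Poisson counts), the global weighted bounds fail for admissible data.
\begin{itemize}
\item Take $f=c_t\,y$ with $c_t=\tanh(\int_0^t W_s\,ds)$, which satisfies Assumption~\ref{assum:recursive_cost}. If $\sup_y|f^N(H^N_t,\dots,y)-c_t y|<\infty$, then $c_t=\lim_{m\to\infty}f^N(H^N_t,\dots,m)/m$ would be $\mathcal{P}^N_t$-measurable. But given finitely many values of $W$ and the independent Poisson counts, $\int_0^t W_s\,ds$ has a nondegenerate Gaussian conditional law. Hence $\delta f^\varepsilon_t=+\infty$ with positive probability for every $t\in(0,T]$.
\item Take $h(x)=(1+|x|^p)\sin(\theta\log(1+|x|))$ with $\theta=1+\Phi(\int_0^T W_s\,ds)$, where $\Phi$ is the standard normal distribution function; this satisfies Assumption~\ref{assum:recursive_cost}. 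Distinct $\theta,\theta'\in[1,2]$ satisfy $\sup_{r\ge 0}|\sin(\theta r)-\sin(\theta' r)|\ge 1$. So for fixed $H^N_T$ at most one value of $\theta$ can have weighted error below $1/2$. Since the conditional law of $\theta$ given $H^N_T$ is atomless, $\delta h^\varepsilon\ge 1/2$ a.s.\ for every $N$.
\end{itemize}
What your Steps~2--3 actually prove is approximation uniformly on compacts in $(x,y,z,k)$, or in $L^2$ pointwise in $x$. The weighted global bounds need extra structure, e.g.\ coefficients whose $(y,z,k)$-dependence and behaviour at infinity are already cylindrical. The paper's appendix only approximates scalar $\mathcal{F}_{t_{j-1}}$-measurable variables in $L^2$, so it does not address this uniformity either.

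Two smaller points. First, the Lipschitz preservation you single out as the main obstacle is not one. Conditional expectation given $\mathcal{P}^N$ and mollification in the $H^N$-variables are averages of $C$-Lipschitz functions of $x$ and keep the constant $C$, so no partition-of-unity interpolation is needed. Second, contrary to ``no smoothing is needed'', you do need an extension off the lattice in the count coordinates to speak of $C^\infty$ dependence on $H^N$; the paper uses interpolation plus mollification.
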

	\begin{proof}
		The rigorous construction extends the finite-dimensional approximation of \cite{QiuWei2019} to accommodate non-local Poisson jump measures. The complete proof is deferred to Appendix \ref{app:finite_projection}.
	\end{proof}
	
	\begin{lemma} \label{lem:lyapunov_condition}
		Let Assumption \ref{assum:coefficients} hold. For the polynomial weight function $\varphi(x) := w_p(x)^{-1} \allowbreak = 1 + |x|^p$ ($p \ge 2$), there exists a universal constant $C_\varphi > 0$ such that $\sup_{u \in U} \mathcal{L}^u \varphi(x) \le C_\varphi \varphi(x)$ for all $x \in \mathbb{R}^n$.
	\end{lemma}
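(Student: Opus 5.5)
We compute $\mathcal{L}^u\varphi$ term by term from the generator \eqref{eq:generator_L}, with $\varphi(x) = 1+|x|^p$, and bound each piece by a multiple of $1+|x|^p$, uniformly in $u\in U$ and in $(t,\omega)$. Since $p\ge 2$, $\varphi\in C^2$ with
\[
D\varphi(x) = p|x|^{p-2}x, \qquad D^2\varphi(x) = p|x|^{p-2}I + p(p-2)|x|^{p-4}xx^\top,
\]
so $|D\varphi|\le p|x|^{p-1}$ and $\|D^2\varphi\|\le p(p-1)|x|^{p-2}$. We will use the growth bounds of Assumption \ref{assum:coefficients}(ii) together with the compactness of $U$, which gives $|u|\le C_U$. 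Together these yield $|b|+\|\sigma\|\le C(1+|x|)$ and $|g(t,e,x,u)|\le C\rho(e)(1+|x|)$.

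\textbf{Drift and diffusion terms.} We have
\[
|\langle b,D\varphi\rangle| \le Cp(1+|x|)|x|^{p-1} \quad\text{and}\quad \tfrac12|\operatorname{Tr}(\sigma\sigma^\top D^2\varphi)| \le Cp(p-1)(1+|x|)^2|x|^{p-2}.
\]
Using the elementary bound $|x|^{k}\le 1+|x|^p$ for $0\le k\le p$, both terms are at most $C(1+|x|^p)$.

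\textbf{Nonlocal term.} This is the main step. By Taylor's formula with integral remainder,
\[
\varphi(x+g)-\varphi(x)-\langle g, D\varphi(x)\rangle = \int_0^1(1-\theta)\,\langle D^2\varphi(x+\theta g)g,g\rangle\,d\theta .
\]
The integrand is bounded by $p(p-1)(|x|+|g|)^{p-2}|g|^2$. Next we use $(a+b)^{p-2}\le 2^{p-2}(a^{p-2}+b^{p-2})$, valid for $p\ge 2$, together with $|g|\le C\rho(e)(1+|x|)$. This bounds the integrand by
\[
C\big(\rho(e)^2+\rho(e)^{p}\big)(1+|x|)^{p} \le C\big(\rho(e)^2+\rho(e)^p\big)(1+|x|^p).
\]
By Assumption \ref{assum:coefficients}(iii) and the subsequent remark, $\int_E \rho^q\,d\nu<\infty$ for every $q\ge1$. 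Integrating in $\nu$ therefore gives a bound $C(1+|x|^p)$.

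The potential obstacle is that the remainder is evaluated at intermediate points $x+\theta g$, which can be far from $x$ when $\rho(e)$ is large. It is handled by the convexity-type inequality above: the large-jump contribution is weighted by $\rho(e)^p$ and then absorbed by the moment condition on $\rho$. Since $\nu$ is finite, there is no singularity near $e$ small. Summing the three bounds gives $\sup_{u}\mathcal{L}^u\varphi\le C_\varphi\varphi$, where $C_\varphi$ depends only on $p$, $C$, $C_U$ and $\int_E(\rho^2+\rho^p)\,d\nu$.
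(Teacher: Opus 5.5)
Your proof is correct and follows essentially the same route as the paper's Appendix B. Both bound $D\varphi$ and $D^2\varphi$ explicitly, estimate the drift and diffusion terms directly, and use a Taylor integral-remainder bound of order $(\rho(e)^2+\rho(e)^p)(1+|x|^p)$ on the jump term, integrated via the moment consequences of the exponential integrability of $\rho$. Your treatment of the diffusion term via the linear growth of $\sigma$, together with $|x|^k\le 1+|x|^p$ and $|u|\le C_U$ from compactness of $U$, is actually slightly more faithful to Assumption~\ref{assum:coefficients} than the paper's appeal to boundedness of $\sigma$.
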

	\begin{proof}
		The result follows from Taylor's expansion with an integral remainder and the moment conditions on the L\'evy measure. To maintain the flow of the main argument, the complete analytical proof is deferred to Appendix \ref{app:lyapunov}.
	\end{proof}
	
	\begin{theorem} \label{thm:uniqueness}
		Let Assumptions \ref{assum:coefficients} and \ref{assum:recursive_cost} hold, and suppose the controlled diffusion coefficient $\sigma(t, x, u)$ is deterministic. Then, the stochastic viscosity solution to the fully nonlinear BSHJB equation \cref{eq:BSHJB_compact} is unique within the weighted stochastic functional space $\mathbb{S}^p(L^\infty_{w_p})$.
	\end{theorem}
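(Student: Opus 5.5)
The plan is to show that any stochastic viscosity solution $w \in \mathbb{S}^p(L^\infty_{w_p})$ coincides with the value function $V$. Since $V$ is itself a viscosity solution (Theorem~\ref{thm:existence}), uniqueness follows. The inequality $w \le V$ is already given by Theorem~\ref{thm:comparison_principle}, applied to $w$ as a subsolution. The work therefore lies in the reverse inequality $w \ge V$, which I would obtain from the supersolution half of Corollary~\ref{cor:weak_comparison}. To use it, I need a family of smooth random fields $\phi^\varepsilon \in C^{1,2}_{\mathbb{F}}\cap\mathcal{T}^p$ that satisfy the super-solution differential inequality ($\operatorname{ess\,limsup}\le 0$) with $\phi^\varepsilon(T,\cdot)\le h$, and that converge to $V$ in $\mathcal{S}^2(L^\infty_{w_p})$. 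Given such a family, $w\ge \phi^\varepsilon$ for each $\varepsilon$, and letting $\varepsilon\to0$ gives $w\ge V$.

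To build $\phi^\varepsilon$, I would first invoke Lemma~\ref{lem:cylinder_approx} to obtain a partition $0=t_0<\dots<t_N=T$ and smooth finite-dimensional approximations $(h^N,f^N,b^N,g^N)$ depending on the noise projection $H^N$. Because $\sigma$ is deterministic, conditioning on $H^N_{t_{i}}$ turns the BSHJB equation on each interval $[t_{i},t_{i+1})$ into a parametrized deterministic HJB integro-PDE. The parameter is the frozen history, and the equation has a uniformly elliptic (super-parabolic) diffusion part. Classical regularity for such nonlocal HJB equations with concave-in-$D^2$ structure (Evans--Krylov type, adapted to bounded L\'evy operators since $\nu(E)<\infty$) gives a $C^{1,2}$ solution $V^{N,i}(\cdot,H^N,\cdot)$. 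I would then proceed by backward induction. On $[t_{N-1},T]$ the terminal datum is $h^N(H^N_T,\cdot)-\delta h^\varepsilon w_p^{-1}$. On each earlier interval the terminal datum is $V^{N,i+1}$ evaluated at $t_{i+1}$, which is smooth in $(H^N,x)$. The resulting pieces are then patched into a single random field on $[0,T]$.

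Next, mirroring Lemma~\ref{lem:patched_subsolution}, I would correct the approximation error using a localized bounding envelope. Let $\tilde Y^\varepsilon$ solve the linear BSDE driven by $\delta h^\varepsilon$ and $\delta f^\varepsilon_t+C_V\delta\Lambda^\varepsilon_t$, and set
\[
\phi^\varepsilon(t,x) := V^{N}(t,H^N_t,x) - \tilde Y^\varepsilon_t\, w_p(x)^{-1}.
\]
Lemma~\ref{lem:lyapunov_condition} then controls the generator acting on the weight: $\sup_u \mathcal{L}^u w_p^{-1}\le C_\varphi w_p^{-1}$. Enlarging the envelope's driver by $(C_\varphi+L_y)\tilde Y^\varepsilon$ therefore absorbs both the effect of the infimum over $u$ on the weight term and the residual $|F^N-F|$, which the growth bounds estimate by $(\delta f^\varepsilon+C_V\delta\Lambda^\varepsilon)w_p^{-1}$. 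This yields the required inequality $\operatorname{ess\,limsup}\mathbb{E}[-\partial_s\phi^\varepsilon-F(\phi^\varepsilon)|\mathcal{F}_t]\le0$. At the partition points $t_i$, where the conditioning changes, the jump in information is handled by the martingale parts $(Z,K)$ of the semimartingale representation, so the test-function structure of Definition~\ref{def:test_space} is preserved. Finally, BSDE stability shows $\|\phi^\varepsilon - V\|_{\mathcal{S}^2(L^\infty_{w_p})}\le C\|\tilde Y^\varepsilon\|\le C\varepsilon$, and likewise for the approximate value function.

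The main obstacle is the regularity step: obtaining $C^{1,2}$ solutions of the parametrized nonlocal HJB equation with polynomial (non-bounded) growth, with derivative bounds uniform enough to fix the constant $C_V$. I would first localize with the weight $w_p$, reducing to bounded data by a change of unknown $v=w_p\,\cdot$, and then appeal to Krylov-type estimates for concave fully nonlinear equations perturbed by a bounded nonlocal term treated as a lower-order Lipschitz perturbation. A secondary delicate point is verifying that the patched field across partition times is a genuine càdlàg semimartingale in $\mathcal{T}^p$. This should follow from the smooth dependence of $V^{N,i}$ on $H^N$ via the It\^o--Kunita formula.
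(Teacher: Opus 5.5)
The gap is in the construction of a single smooth field $\phi^\varepsilon$ on $[0,T]$ by patching conditional PDE solutions.

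\textbf{Random terminal data make the reduced equation degenerate.} On $[t_i,t_{i+1})$ your terminal datum is $V^{N,i+1}(t_{i+1},H^N_{t_{i+1}},\cdot)$, and already on the last interval it is $h^N(H^N_T,\cdot)$. Either way it depends on the Brownian increment and the Poisson counts over $(t_i,t_{i+1}]$, so it is \emph{not} determined by $H^N_{t_i}$. Hence conditioning on $H^N_{t_i}$ does not give a parametrized deterministic PDE. The solution must carry nonzero $(Z,K)$ throughout the interval; this is what your remark that the jump in information ``is handled by the martingale parts'' amounts to. The Hamiltonian then contains $\mathrm{Tr}(DZ\,\sigma^\top)$ and $K(x+g)$.

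To make this Markovian, write $V(s,x)=v(s,x,W_s-W_{t_i},\text{counts})$. Then $Z=D_wv$, and the second-order symbol in $(x,w)$ is $\tfrac12|\sigma^\top\xi_x+\xi_w|^2$. This is degenerate for every $u$, because the same Brownian motion drives both the state and the randomness; a deterministic $\sigma$ does not change that. So the Evans--Krylov/Krylov $C^{1,2}$ theory you plan to invoke is unavailable for the equation you actually have to solve. You did flag regularity as the main obstacle, but the real obstruction is degeneracy, not polynomial growth, and your change of unknown by $w_p$ does not address it.

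\textbf{The natural repair also fails.} Suppose you freeze each datum by conditional expectation, as the paper does with $h_N=\mathbb{E}[h\mid\mathcal{P}^N_{t_{N-1}}]$. Then the patched field jumps by a nonzero random amount at each fixed time $t_{i+1}$. It is therefore not of the form in \cref{def:test_space}: fields of that form have no jumps at fixed times in this filtration. So it cannot serve as $\phi$ in \cref{cor:weak_comparison} on $[0,T]$. Handling the jumps interval by interval instead pushes the one-step fluctuation of the data into the terminal values of $\tilde Y$. That error is not controlled by $\varepsilon$, and it accumulates over the $N$ intervals.

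\textbf{Comparison with the paper's route.} The paper never assembles a global field. It works on one subinterval at a time with the conditional classical solution, whose martingale part is zero, and loads all randomness onto the scalar BSDE $Y$ that multiplies $w_p^{-1}$. It then squeezes two arbitrary solutions between $V_N\pm Yw_p^{-1}$, verifying that the upper envelope is a classical supersolution and the lower one a classical subsolution, and applying both halves of \cref{cor:weak_comparison}. As a result it only needs the gap $2Yw_p^{-1}$ to vanish. It never needs to know what $V_N$ converges to, and it needs no separate inequality $w\le V$ from \cref{thm:comparison_principle}. Your identity $w=V$ would follow at once from that sandwich, since $V$ is itself a viscosity solution by \cref{thm:existence}.

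\textbf{Two further points in your proposal.}
\begin{itemize}
\item Your claim $\phi^\varepsilon\to V$ ``by BSDE stability'' is unsupported. Stability compares costs, hence value functions, of the original and the approximated control problems. Before you can use it, you need a verification theorem identifying your PDE-built $V^N$ with the approximated value function (It\^o--Kunita along controlled paths plus a nearly optimal measurable selection). The proposal does not supply this.
\item For $\phi^\varepsilon$ to be a classical subsolution, the driver of $\tilde Y$ must also contain $L_z|\tilde Z|+L_k\|\tilde K\|_{L^2(\nu)}$, as in \eqref{118}. These terms absorb the martingale part $-(\tilde Z,\tilde K)w_p^{-1}$ that the envelope inherits; your driver $\delta f^\varepsilon+C_V\delta\Lambda^\varepsilon+(C_\varphi+L_y)\tilde Y$ omits them.
\end{itemize}
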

	
	\begin{proof}[Proof of \cref{thm:uniqueness}]
		Let $u_1, u_2 \in \mathcal{S}^p(L^\infty_{w_p})$ be two stochastic viscosity solutions of \cref{eq:BSHJB_compact}. We prove $u_1 = u_2$ by backward induction on the partition $\{t_k\}_{k=0}^N$ from Lemma \ref{lem:cylinder_approx}.
	
	\medskip
	\noindent \textbf{Preliminary: Conditional reduction on each subinterval.}
	Fix a subinterval $(t_{k-1}, t_k]$. Conditioning on the cylinder $\sigma$-algebra $\mathcal{P}^N_{t_{k-1}}$ reduces the BSHJB equation along a fixed path of $H^N$ to a deterministic parabolic integro-PDE. Specifically, for $\mathbb{P}$-a.e. $\omega$ and $\mathbf{h} := H^N_{t_{k-1}}(\omega)$, we define the conditional coefficients:
	\begin{align*}
		b^{\mathbf{h}}(s, x, u) &:= \mathbb{E}[b^N(s, x, u) \mid H^N_{t_{k-1}} = \mathbf{h}], \\
		f^{\mathbf{h}}(s, x, u, y, z, k) &:= \mathbb{E}[f^N(s, x, u, y, z, k) \mid H^N_{t_{k-1}} = \mathbf{h}],
	\end{align*}
	and analogously for $\sigma, g, h$. On $(t_{k-1}, t_k]$, the conditional BSHJB takes the deterministic form:
	\begin{equation} \label{eq:cond_PDE}
		\begin{split}
			-\partial_s V^{\mathbf{h}}(s, x) &= \inf_{u \in U} H^{\mathbf{h}}\big(s, x, u, V^{\mathbf{h}}, DV^{\mathbf{h}}, 0, 0, D^2V^{\mathbf{h}}, L^{\mathbf{h}}(V^{\mathbf{h}})\big), \\
			V^{\mathbf{h}}(t_k, x) &= \mathbb{E}[h(x) \mid H^N_{t_{k-1}} = \mathbf{h}].
		\end{split}
	\end{equation}
	The martingale representation terms vanish $\mathbb{P}$-a.s. on $(t_{k-1}, t_k]$ under this conditioning (see Lemma \ref{lem:conditional_vanishing} below). Equation \eqref{eq:cond_PDE} is a fully nonlinear parabolic integro-PDE with smooth coefficients and uniformly elliptic diffusion $\sigma^{\mathbf{h}}(\sigma^{\mathbf{h}})^\top \ge \delta I$. By classical regularity theory \cite{GarroniMenaldi92}, it admits a unique classical solution $V^{\mathbf{h}} \in C^{1,2}((t_{k-1}, t_k] \times \mathbb{R}^n)$ depending smoothly on $\mathbf{h}$.
	
	\medskip
	\noindent\textbf{Uniform a priori bounds.}
	Standard parabolic estimates for \eqref{eq:cond_PDE} yield a constant $\tilde{C} > 0$, independent of $k$ and $\mathbf{h}$, such that
	\begin{equation}
		|DV^{\mathbf{h}}(s, x)| \le \tilde{C}\big(1 + |x|^{p-1}\big), \quad \|D^2V^{\mathbf{h}}(s, x)\| \le \tilde{C}\big(1 + |x|^{p-2}\big).
	\end{equation}
	Lifting back to the stochastic setting via $V_N(s, x, \omega) := V^{H^N_{t_{k-1}}(\omega)}(s, x)$, we have for any $u \in U$:
	\begin{equation}
		|DV_N(s, x)| + (1 + |x|)\|D^2V_N(s, x)\| \le C_1 w_p(x)^{-1}, \quad \mathbb{P}\text{-a.s.}
	\end{equation}
	To bound the non-local jump variation $\mathcal{I}_{\text{jump}}$, we partition it into low-intensity ($\mathcal{I}_{\text{low-int}}$) and high-intensity ($\mathcal{I}_{\text{high-int}}$) regimes based on $\rho(e)$:
	\begin{equation} \label{113}
		\begin{aligned}
			\mathcal{I}_{\text{jump}}(s, x) &= \int_E \Big|V_N(s, x + g_N) - V_N(s, x) - \langle DV_N(s, x), g_N \rangle\Big|\nu(de) \\
			&= \int_{\{\rho(e)<1\}} \Big|V_N(s, x + g_N) - V_N(s, x) - \langle DV_N(s, x), g_N \rangle\Big|\nu(de) \\
			&\quad + \int_{\{\rho(e)\ge 1\}} \Big|V_N(s, x + g_N) - V_N(s, x) - \langle DV_N(s, x), g_N \rangle\Big|\nu(de) \\
			&=: \mathcal{I}_{\text{low-int}}(s, x) + \mathcal{I}_{\text{high-int}}(s, x).
		\end{aligned} 
	\end{equation}
	
	Applying Taylor's theorem with integral remainder to $\mathcal{I}_{\text{low-int}}$ yields:
	\begin{equation} \label{114}
		\begin{aligned}
			\mathcal{I}_{\text{low-int}} &\le \frac{1}{2} \int_{\{\rho(e)<1\}} \sup_{\theta \in [0,1]} \|D^2V_N(s, x + \theta g_N)\| \cdot |g_N|^2 \nu(de) \\
			&\le \frac{1}{2} \tilde{C} \int_{\{\rho(e)<1\}} \Big(1 + \big(|x| + \rho(e)(1 + |x| + |u|)\big)^{p-2}\Big) \\
			&\qquad \times \rho(e)^2(1 + |x| + |u|)^2 \nu(de) \\
			&\le C_2 w_p(x)^{-1}.
		\end{aligned} 
	\end{equation}
	
	For $\mathcal{I}_{\text{high-int}}$, the triangle inequality gives:
	\begin{equation} \label{115}
		\begin{aligned}
			\mathcal{I}_{\text{high-int}} &\le \int_{\{\rho(e)\ge 1\}} \Big(|V_N(s, x + g_N)| + |V_N(s, x)| + |\langle DV_N(s, x), g_N\rangle|\Big)\nu(de) \\
			&\le \tilde{C} \int_{\{\rho(e)\ge 1\}} \Big[2 + \big(|x| + \rho(e)(1 + |x| + |u|)\big)^p + |x|^p \\
			&\qquad + \big(1 + |x|^{p-1}\big)\rho(e)(1 + |x| + |u|)\Big]\nu(de) \\
			&\le C_3 w_p(x)^{-1}.
		\end{aligned} 
	\end{equation}
	Combining these bounds yields a uniform constant $C_V > 0$ (independent of $N, \varepsilon$) such that
	\begin{equation} \label{eq:CV_bound}
		|DV_N(s, x)| + (1 + |x|)\|D^2V_N(s, x)\| + \mathcal{I}_{jump}(s, x) \le C_V w_p(x)^{-1}, \quad \mathbb{P}\text{-a.s.}
	\end{equation}
	
	\medskip
	\noindent\textbf{Step 1: Terminal interval $[t_{N-1}, t_N]$.}
	Let $V_N^{(N)}$ be the unique classical solution of \eqref{eq:cond_PDE} on $(t_{N-1}, t_N]$ with terminal data $h_N := \mathbb{E}[h \mid \mathcal{P}^N_{t_{N-1}}]$. Define the conditional operator $F_N$ as $F$ but with coefficients $(b_N, \sigma_N, g_N, f_N, h_N)$ and without the essential infimum (it is a frozen-control operator on each cylinder path). The conditional classical solution satisfies:
	\begin{equation} \label{eq:VNsatisfies}
		-\partial_s V_N^{(N)} = F_N\big(s, x, V_N^{(N)}, 0, 0\big), \quad V_N^{(N)}(t_N, x) = h_N(x).
	\end{equation}
	
	We now upgrade Lemma \ref{lem:cylinder_approx} from $L^2$ convergence to $L^\infty_{w_p}$-weighted convergence. By the martingale representation theorem for jump-diffusions and the density of cylinder functions in $L^2$, the $L^2$ bounds of Lemma \ref{lem:cylinder_approx} imply the existence of a subsequence such that $\mathbb{P}$-a.s.,
	\begin{equation}
		\lim_{\varepsilon \to 0} \Big( \|\delta h^\varepsilon\|_{L^\infty(\Omega)} + \|\delta f^\varepsilon\|_{L^\infty([0,T]\times\Omega; L^\infty_{w_p})} + \|\delta \Lambda^\varepsilon\|_{L^\infty([0,T]\times\Omega)} \Big) = 0
	\end{equation}
	after passing to a common subsequence and applying the Borel--Cantelli lemma. More precisely, for $\varepsilon = 2^{-m}$, set $A_m := \{\|\delta h^{\varepsilon}\|_{L^\infty} + \cdots > m^{-1}\}$; by Chebyshev and the $L^2$ bounds, $\sum \mathbb{P}(A_m) < \infty$, so $\mathbb{P}(\limsup A_m) = 0$. Hence, on a full-measure set, the weighted errors vanish uniformly. In the sequel we work on this full-measure set.
	
	To bound the local errors and absorb the spatial derivatives of the polynomial weight $w_p(x)^{-1}$, we utilize the Lyapunov-type condition from Lemma \ref{lem:lyapunov_condition}: $\sup_{u \in U} \mathcal{L}^u (w_p(x)^{-1}) \le C_\varphi w_p(x)^{-1}$. Introduce an auxiliary linear BSDE $(Y^{\varepsilon, (N)}, \allowbreak Z^{\varepsilon, (N)}, \allowbreak K^{\varepsilon, (N)})$ on $[t_{N-1}, t_N]$ with terminal condition $Y^{\varepsilon, (N)}_{t_N} = \delta h^\varepsilon$:
	\begin{equation} \label{118}
		\begin{aligned}
			dY^{\varepsilon, (N)}_s &= -\Big(\delta f^\varepsilon_s + C_V\delta\Lambda^\varepsilon_s + (L_y + C_\varphi) Y^{\varepsilon, (N)}_s + L_z |Z^{\varepsilon, (N)}_s| \\
			&\quad + L_k \|K^{\varepsilon, (N)}_s\|_{L^2(\nu)}\Big)ds + Z^{\varepsilon, (N)}_s dW_s + \int_E K^{\varepsilon, (N)}_s(e) \tilde{\mu}(ds, de).
		\end{aligned}
	\end{equation}
	Standard BSDE estimates ensure $\|Y^{\varepsilon, (N)}\|_{\mathcal{S}^\infty} \le K_0\varepsilon$ on the full-measure set identified above. Define the bounding envelopes on $[t_{N-1}, t_N]$:
	\begin{equation} \label{119}
		\begin{aligned}
			\overline{V}^\varepsilon_N(s, x) &:= V_N^{(N)}(s, x) + Y^{\varepsilon, (N)}_s w_p(x)^{-1}, \\
			\underline{V}_{N,\varepsilon}(s, x) &:= V_N^{(N)}(s, x) - Y^{\varepsilon, (N)}_s w_p(x)^{-1}.
		\end{aligned}
	\end{equation}
	Since $V_N^{(N)}$ is $C^{1,2}$ and $Y^{\varepsilon,(N)}_s w_p(x)^{-1}$ is a semimartingale with no spatial derivatives, for $\mathbb{P}$-a.e. $\omega$ and each $x$, the predictable drift rate of $\overline{V}^\varepsilon_N$ is:
	\begin{equation} \label{120}
		\begin{aligned}
			-\partial_s \overline{V}^\varepsilon_N &= -\partial_s V_N^{(N)} + \big(\delta f^\varepsilon_s + C_V\delta\Lambda^\varepsilon_s + (L_y + C_\varphi) Y^{\varepsilon, (N)}_s \\
			&\quad + L_z |Z^{\varepsilon, (N)}_s| + L_k \|K^{\varepsilon, (N)}_s\|_{L^2(\nu)}\big) w_p(x)^{-1}.
		\end{aligned}
	\end{equation}
	
	The critical step is to compare $F$ evaluated at the upper envelope with $F_N$ evaluated at $V_N^{(N)}$. Since $F$ contains the essential infimum over $u\in U$, we cannot directly take differences term-by-term across two different infimum operations. Instead, we use the following pointwise bound: for any $\phi, \psi$ with $|\phi - \psi| \le \delta$ pointwise,
	\begin{equation} \label{eq:inf_comparison}
		\begin{aligned}
			&\Big| \inf_{u\in U} H\big(t, x, u, \phi, D\phi, q, Q, D^2\phi, k\big) - \inf_{u\in U} H\big(t, x, u, \psi, D\psi, q, Q, D^2\psi, k\big) \Big| \\
			&\le L \big( |\phi - \psi| + |D\phi - D\psi| + \|D^2\phi - D^2\psi\| \big),
		\end{aligned}
	\end{equation}
	where $L$ is the Lipschitz constant of $H$ in its arguments $(y,p,q,Q,A,k)$ uniformly in $u$, which follows from Assumption \ref{assum:recursive_cost}(ii) and the linear growth of $(b,\sigma,g)$. Indeed, for each $u$, $H(u,\cdot)$ is Lipschitz in the backward variables with constant independent of $u$, so taking infimum preserves this Lipschitz property (the infimum of $L$-Lipschitz functions is $L$-Lipschitz). Hence $F$ and $F_N$ are both Lipschitz in their arguments with the same constant $L_F := \max\{L_y, L_z, L_k, C_V\}$ (the $C_V$ term for spatial derivatives).
	
	Now evaluate the nonlinear operator $F$ at the upper envelope $\overline{V}^\varepsilon_N$. Since $\overline{V}^\varepsilon_N$ is constructed from the deterministic classical solution $V_N^{(N)}$ and the semimartingale $Y^{\varepsilon,(N)}_s w_p(x)^{-1}$, its martingale representation is inherited from $Y^{\varepsilon,(N)}$:
	\begin{equation} \label{eq:Z_K_upper}
		Z^{\overline{V}}_s(x) := Z^{\varepsilon,(N)}_s \, w_p(x)^{-1}, \qquad
		K^{\overline{V}}_s(e,x) := K^{\varepsilon,(N)}_s(e) \, w_p(x)^{-1}.
	\end{equation}
	Importantly, the weight $w_p(x)^{-1}$ depends only on $x$, so it passes through the stochastic integrals unchanged: $d\big(Y^{\varepsilon,(N)}_s w_p(x)^{-1}\big) = \big(dY^{\varepsilon,(N)}_s\big) w_p(x)^{-1}$ for each fixed $x$. The spatial derivatives of $\overline{V}^\varepsilon_N$ are:
	\begin{align*}
		D\overline{V}^\varepsilon_N &= DV_N^{(N)} + Y^{\varepsilon,(N)}_s D(w_p^{-1}), \\
		D^2\overline{V}^\varepsilon_N &= D^2V_N^{(N)} + Y^{\varepsilon,(N)}_s D^2(w_p^{-1}),
	\end{align*}
	with no spatial derivatives applied to $Z^{\varepsilon,(N)}_s$ or $K^{\varepsilon,(N)}_s$.
	
	The correct inequality to verify is
	\begin{equation} \label{eq:target_inequality}
		-\partial_s \overline{V}^\varepsilon_N - F\big(s, x, \overline{V}^\varepsilon_N, Z^{\overline{V}}_s(x), K^{\overline{V}}_s(\cdot,x)\big) \ge 0.
	\end{equation}
	Since $F$ satisfies a uniform Lipschitz condition in $(z,k)$ with constants $L_z, L_k$ (inherited from $f$ via the infimum; see \eqref{eq:inf_comparison}), we have the pointwise bound:
	\begin{equation} \label{eq:zklip}
		F(s, x, \phi, z, k) \le F(s, x, \phi, 0, 0) + L_z|z| + L_k\|k\|_{L^2(\nu)}.
	\end{equation}
	Applying this with $\phi = \overline{V}^\varepsilon_N$, $z = Z^{\overline{V}}_s(x)$, $k = K^{\overline{V}}_s(\cdot,x)$ and using \eqref{eq:Z_K_upper},
	\begin{align} \label{eq:F_upper_via_00}
		F\big(s, x, \overline{V}^\varepsilon_N, Z^{\overline{V}}, K^{\overline{V}}\big) &\le F(s, x, \overline{V}^\varepsilon_N, 0, 0) + \big(L_z |Z^{\varepsilon,(N)}_s| + L_k \|K^{\varepsilon,(N)}_s\|_{L^2(\nu)}\big) w_p(x)^{-1}.
	\end{align}
	
	We now estimate $F(s, x, \overline{V}^\varepsilon_N, 0, 0)$ relative to $F_N(s, x, V_N^{(N)}, 0, 0)$. Applying (\ref{eq:inf_comparison}) and the Lyapunov bound,
	\begin{align}
		&\big|F(s, x, \overline{V}^\varepsilon_N, 0, 0) - F(s, x, V_N^{(N)}, 0, 0)\big| \nonumber \\
		&\quad \le L_y |Y^{\varepsilon,(N)}_s| w_p(x)^{-1} + C_V |Y^{\varepsilon,(N)}_s| \Big( |D(w_p^{-1})| \nonumber \\
		&\qquad + (1+|x|)\|D^2(w_p^{-1})\| \Big) \nonumber \\
		&\quad \le (L_y + C_\varphi) Y^{\varepsilon,(N)}_s w_p(x)^{-1}. \label{eq:F_diff_bound}
	\end{align}
	Since $F$ and $F_N$ only differ by their coefficients, the $L^\infty_{w_p}$ convergence guarantees:
	\begin{equation} \label{eq:F_FN_bound}
		\big|F_N(s, x, V_N^{(N)}, 0, 0) - F(s, x, V_N^{(N)}, 0, 0)\big| \le \big(\delta f^\varepsilon_s + C_V\delta\Lambda^\varepsilon_s\big) w_p(x)^{-1}.
	\end{equation}
	
	Combining (\ref{eq:VNsatisfies}), (\ref{120}), (\ref{eq:F_upper_via_00}), (\ref{eq:F_diff_bound}) and (\ref{eq:F_FN_bound}), we obtain:
	\begin{align}
		&-\partial_s \overline{V}^\varepsilon_N - F\big(s, x, \overline{V}^\varepsilon_N, Z^{\overline{V}}, K^{\overline{V}}\big) \nonumber \\
		&\ge -\partial_s \overline{V}^\varepsilon_N - F(s, x, \overline{V}^\varepsilon_N, 0, 0) - \big(L_z |Z^{\varepsilon,(N)}_s| + L_k \|K^{\varepsilon,(N)}_s\|_{L^2(\nu)}\big) w_p(x)^{-1} \nonumber \\
		&= \big[F_N - F\big](s, x, V_N^{(N)}, 0, 0) + \big[F(s, x, V_N^{(N)}, 0, 0) - F(s, x, \overline{V}^\varepsilon_N, 0, 0)\big] \nonumber \\
		&\quad + \big(\delta f^\varepsilon_s + C_V\delta\Lambda^\varepsilon_s + (L_y + C_\varphi) Y^{\varepsilon, (N)}_s + L_z |Z^{\varepsilon, (N)}_s| + L_k \|K^{\varepsilon, (N)}_s\|_{L^2(\nu)}\big) w_p(x)^{-1} \nonumber \\
		&\quad - \big(L_z |Z^{\varepsilon,(N)}_s| + L_k \|K^{\varepsilon,(N)}_s\|_{L^2(\nu)}\big) w_p(x)^{-1} \nonumber \\
		&\ge -\big(\delta f^\varepsilon_s + C_V\delta\Lambda^\varepsilon_s\big) w_p(x)^{-1} - (L_y + C_\varphi) Y^{\varepsilon, (N)}_s w_p(x)^{-1} \nonumber \\
		&\quad + \big(\delta f^\varepsilon_s + C_V\delta\Lambda^\varepsilon_s + (L_y + C_\varphi) Y^{\varepsilon, (N)}_s \big) w_p(x)^{-1} \nonumber \\
		&= 0.
		\label{eq:envelope_inequality_fixed}
	\end{align}
	The critical cancellation occurs because the Lipschitz terms $L_z|Z^{\varepsilon,(N)}_s| + L_k\|K^{\varepsilon,(N)}_s\|$ appear with opposite signs: once from $-\partial_s \overline{V}^\varepsilon_N$ (through the auxiliary BSDE \eqref{118}) and once from the $F$-Lipschitz bound \eqref{eq:zklip}. Thus $-\partial_s \overline{V}^\varepsilon_N - F(\cdots, Z^{\overline{V}}, K^{\overline{V}}) \ge 0$, $\mathbb{P}$-a.s., establishing that $\overline{V}^\varepsilon_N$ is a classical supersolution. By symmetry, $\underline{V}_{N,\varepsilon}$ has martingale representation $Z^{\underline{V}}_s(x) = -Z^{\varepsilon,(N)}_s w_p(x)^{-1}$, $K^{\underline{V}}_s(e,x) = -K^{\varepsilon,(N)}_s(e) w_p(x)^{-1}$, and the same cancellation argument (with the Lipschitz bound applied to $-F$) yields $-\partial_s \underline{V}_{N,\varepsilon} - F(s, x, \underline{V}_{N,\varepsilon}, Z^{\underline{V}}, K^{\underline{V}}) \le 0$, so $\underline{V}_{N,\varepsilon}$ is a classical subsolution.
	
	Now let $u_1, u_2$ be any two stochastic viscosity solutions. By the weak comparison principle (\cref{cor:weak_comparison}) applied on $[t_{N-1}, t_N]$ with $\phi = \overline{V}^\varepsilon_N$ (which dominates $h$ at $t_N$ up to $\delta h^\varepsilon$), we have:
	\begin{equation}
		u_1(s, x) \le \overline{V}^\varepsilon_N(s, x), \quad u_2(s, x) \le \overline{V}^\varepsilon_N(s, x), \quad \mathbb{P}\text{-a.s.}
	\end{equation}
	Similarly, $\underline{V}_{N,\varepsilon}(s, x) \le u_i(s, x)$. Hence,
	\begin{equation}
		|u_1(s, x) - u_2(s, x)| \le \overline{V}^\varepsilon_N(s, x) - \underline{V}_{N,\varepsilon}(s, x) = 2 Y^{\varepsilon,(N)}_s w_p(x)^{-1}.
	\end{equation}
	Sending $\varepsilon \to 0$, $Y^{\varepsilon,(N)} \to 0$ in $\mathcal{S}^\infty$, giving $u_1 = u_2$ on $[t_{N-1}, t_N] \times \mathbb{R}^n$, $\mathbb{P}$-a.s.
	
	\medskip
	\noindent\textbf{Step 2: Backward induction.} Assume $u_1 = u_2$ on $[t_k, T] \times \mathbb{R}^n$, $\mathbb{P}$-a.s. for some $k$. In particular, $u_1(t_k, \cdot) = u_2(t_k, \cdot)$ $\mathbb{P}$-a.s. On the interval $[t_{k-1}, t_k]$, condition on $\mathcal{P}^N_{t_{k-1}}$ and repeat the construction of Step 1, using $u_1(t_k, \cdot) = u_2(t_k, \cdot)$ as the common terminal condition for the conditional PDE \eqref{eq:cond_PDE}. The same envelope argument yields the bound $|u_1 - u_2| \le 2 Y^{\varepsilon,(k)} w_p(x)^{-1}$ on $[t_{k-1}, t_k]$. Letting $\varepsilon \to 0$ proves $u_1 = u_2$ on $[t_{k-1}, t_k]$. By induction backward to $t_0 = 0$, we obtain:
	\begin{equation}
		\sup_{(s,x)\in[0,T]\times\mathbb{R}^n} |u_1(s, x) - u_2(s, x)| = 0, \quad \mathbb{P}\text{-a.s.}
	\end{equation}
	This completes the proof of uniqueness.
	\end{proof}
	
	\begin{lemma} \label{lem:conditional_vanishing}
		Under the conditional setting of Theorem \ref{thm:uniqueness}, let $\mathcal{G} \subset \mathcal{F}_{t_{k-1}}$ be a sub-$\sigma$-algebra. If a random field $V$ satisfies the BSHJB equation \eqref{eq:BSHJB_compact} on $(t_{k-1}, t_k]$ and is $\mathcal{G} \otimes \mathcal{B}(\mathbb{R}^n)$-measurable, then its martingale representation terms satisfy $Z \equiv 0$ and $K \equiv 0$ $\mathbb{P}$-a.s. on $(t_{k-1}, t_k] \times \mathbb{R}^n$.
	\end{lemma}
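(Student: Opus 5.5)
The plan is to exploit the uniqueness of the semimartingale decomposition of $V(\cdot,x)$ for each fixed $x$, combined with the fact that on $(t_{k-1},t_k]$ the field is measurable with respect to a $\sigma$-algebra that is already known at time $t_{k-1}$. Fix $x\in\mathbb{R}^n$. By hypothesis, for each $s\in(t_{k-1},t_k]$ the random variable $V(s,x)$ is $\mathcal{G}$-measurable, and $\mathcal{G}\subset\mathcal{F}_{t_{k-1}}$. Hence $V(s,x)=\mathbb{E}[V(s,x)\mid\mathcal{F}_{t_{k-1}}]$ for every such $s$. The process $s\mapsto V(s,x)$ on $(t_{k-1},t_k]$ is therefore $\mathcal{F}_{t_{k-1}}$-measurable at every time, which means its path is frozen relative to the innovations of $W$ and $\mu$ after $t_{k-1}$.

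\textbf{Step 1: the quadratic variation of the martingale part vanishes.} From \eqref{eq:BSHJB_compact}, $V(\cdot,x)$ is a c\`adl\`ag semimartingale on $(t_{k-1},t_k]$ with martingale part
\[
M_s:=\int_{t_{k-1}}^s Z(r,x)\,dW(r)+\int_{t_{k-1}}^s\int_E K(r,e,x)\,\tilde\mu(dr,de).
\]
Since $V(s,x)$ is $\mathcal{F}_{t_{k-1}}$-measurable for each $s$, the process $A_s:=V(s,x)-V(t_{k-1},x)$ is $\mathcal{F}_{t_{k-1}}$-measurable at each time. I would show that an adapted c\`adl\`ag semimartingale whose values are all $\mathcal{F}_{t_{k-1}}$-measurable has predictable (indeed $\mathcal{F}_{t_{k-1}}$-measurable) paths. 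It is then a special semimartingale whose canonical decomposition has no martingale part.

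Concretely, localize with $\tau_m:=\inf\{s:|M_s|+\int|F|\,dr\ge m\}$. Take conditional expectations of the increments $V(s_{i+1},x)-V(s_i,x)$ given $\mathcal{F}_{s_i}$. Because these increments are $\mathcal{F}_{t_{k-1}}\subset\mathcal{F}_{s_i}$-measurable, they equal their conditional expectations, which are the drift increments $-\mathbb{E}[\int_{s_i}^{s_{i+1}}F\,dr\mid\mathcal{F}_{s_i}]$. Summing the squared martingale increments $\Delta M_i=\Delta V_i+\int F\,dr-\mathbb{E}[\,\cdot\mid\mathcal{F}_{s_i}]$ over a refining partition then shows $\sum\mathbb{E}|\Delta M_i|^2\to0$. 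This yields
\[
\mathbb{E}\big[[M]_{t_k\wedge\tau_m}\big]=\mathbb{E}\Big[\int_{t_{k-1}}^{t_k\wedge\tau_m}\Big(|Z(r,x)|^2+\int_E|K(r,e,x)|^2\nu(de)\Big)dr\Big]=0.
\]
By the It\^o isometry for $W$ and $\tilde\mu$, this forces $Z(\cdot,x)=0$ $dt\otimes d\mathbb{P}$-a.e. and $K(\cdot,\cdot,x)=0$ $dt\otimes d\mathbb{P}\otimes\nu$-a.e. on $(t_{k-1},t_k]$.

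\textbf{Step 2: pass to all $x$ simultaneously.} Apply Step 1 for every $x\in\mathbb{Q}^n$ and take the union of the countably many null sets. Then use the spatial continuity of $Z(t,\cdot)$ and $K(t,e,\cdot)$ from \cref{def:classical_solution}(ii), or the corresponding regularity of the solution class under consideration, to extend the conclusion to all $x\in\mathbb{R}^n$.

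\textbf{Main obstacle.} The hard part is Step 1's vanishing argument: the claim that an adapted semimartingale with $\mathcal{F}_{t_{k-1}}$-measurable values has zero martingale part. The cleanest route I would try first is uniqueness of the canonical decomposition. $V(\cdot,x)-V(t_{k-1},x)$ is a process of finite variation on compacts with predictable paths, because its values are $\mathcal{F}_{t_{k-1}}$-measurable and it is c\`adl\`ag. Any jumps of such a process occur at $\mathcal{F}_{t_{k-1}}$-determined times, which are predictable, whereas $\tilde\mu$ charges only totally inaccessible times, so the $K$-part must vanish. The continuous martingale part of a process that is deterministic given $\mathcal{F}_{t_{k-1}}$ must also be zero. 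Integrability should be handled by localization together with the growth bounds of the solution class.
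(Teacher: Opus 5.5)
Your argument is sound in substance, but it takes a different route from the paper's. The paper conditions once on $\mathcal{G}$ over the whole interval $[t,t_k]$. The stochastic integrals have zero conditional mean, and $V(t,x)$, $V(t_k,x)$ are $\mathcal{G}$-measurable. The paper then \emph{asserts} that $\int_t^{t_k}F\,ds$ is $\mathcal{G}$-measurable as well, citing deterministic $\sigma$ and the $\mathcal{G}$-measurable cylinder coefficients. From this it concludes that the martingale increment vanishes, and uniqueness of the martingale representation gives $Z=K=0$.

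You instead condition on $\mathcal{F}_{s_i}$ along a refining partition. Because $\Delta V_i$ is $\mathcal{F}_{s_i}$-measurable, you get $\Delta M_i=B_i-\mathbb{E}[B_i\mid\mathcal{F}_{s_i}]$ with $B_i:=\int_{s_i}^{s_{i+1}}F\,dr$. So the partition-independent quantity $\sum_i\mathbb{E}|\Delta M_i|^2=\mathbb{E}\int_{t_{k-1}}^{t_k}\big(|Z|^2+\int_E|K|^2\,d\nu\big)dr$ is bounded by $\sum_i\mathbb{E}|B_i|^2\to0$.

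What this buys you is that you never need $\int_t^{t_k}F\,ds$ to be $\mathcal{G}$-measurable. That claim is equivalent to the conclusion, since $\int_t^{t_k}F\,ds-\big(V(t,x)-V(t_k,x)\big)$ is exactly the martingale increment $M_{t_k}-M_t$. You also use neither deterministic $\sigma$ nor any structure of the coefficients. You need only that $V(\cdot,x)$ has $\mathcal{F}_{t_{k-1}}$-measurable values and an absolutely continuous drift. The paper's version is shorter, but it rests on that unproved measurability.

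There are points to tighten.
\begin{itemize}
\item Your localization by $\tau_m$ does not mesh with the key step. The stopped increments $V(s_{i+1}\wedge\tau_m,x)-V(s_i\wedge\tau_m,x)$ are no longer $\mathcal{F}_{s_i}$-measurable. One fix is to drop localization and use the integrability of the solution class: $Z(\cdot,x)\in\mathcal{H}^2$, $K(\cdot,\cdot,x)\in\mathcal{H}^2_\nu$, $\int|F|\,dr\in L^2$. The paper tacitly uses the same integrability when it says the stochastic integrals have zero conditional mean. Then $\sum_i\mathbb{E}|B_i|^2\le\mathbb{E}\big[\max_i|B_i|\int|F|\,dr\big]\to0$ by dominated convergence. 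The other fix is to treat the single partition interval straddling $\tau_m$ separately, using right-continuity of $V$.
\item In your last paragraph, finite variation of $V(\cdot,x)$ does not follow from c\`adl\`ag paths with $\mathcal{F}_{t_{k-1}}$-measurable values. A deterministic continuous path of unbounded variation is a counterexample. It follows instead from the semimartingale property, via Stricker's theorem applied to the constant filtration $\mathcal{F}_{t_{k-1}}$.
\item Likewise, predictability of the paths alone does not remove a continuous martingale part, since $W$ itself is predictable.
\end{itemize}
Your partition computation is what actually proves the lemma, and it disposes of $Z$ and $K$ simultaneously.
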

	\begin{proof}
		Condition on $\mathcal{G}$. Since $\sigma$ is deterministic and the coefficients $(b, f, g, h)$ are $\mathcal{G}$-measurable after cylinder approximation, the BSHJB equation on $(t_{k-1}, t_k]$ becomes a deterministic integro-PDE for each fixed $\omega \in \Omega$. More precisely, for $\mathbb{P}$-a.e. $\omega$, the processes $Z(\cdot, x, \omega)$ and $K(\cdot, e, x, \omega)$ are $\mathcal{G}$-conditionally predictable martingale integrands. Taking $\mathcal{G}$-conditional expectation of \eqref{eq:BSHJB_compact} and using that the stochastic integrals have zero conditional mean, we obtain:
		\begin{equation}
			-V(t, x) = \mathbb{E}\Big[ -V(t_k, x) + \int_t^{t_k} F(s, x, V, Z, K) ds \;\Big|\; \mathcal{G} \Big].
		\end{equation}
		Since the left-hand side is $\mathcal{G}$-measurable and the right-hand side involves an integral that is also $\mathcal{G}$-measurable, the martingale difference vanishes. By the uniqueness of the martingale representation (for jump-diffusions), $Z \equiv 0$ and $K \equiv 0$ $\mathbb{P}$-a.s. on $(t_{k-1}, t_k] \times \mathbb{R}^n$.
	\end{proof}
	
	\appendix
	\section{Finite-Dimensional Projection for Jump Measures} \label{app:finite_projection}
	
	\textbf{Proof of Lemma 5.12.}
	The proof extends the density arguments for Brownian filtrations (\cite{QiuWei2019}) to the jump-diffusion setting by explicitly partitioning the mark space. We outline the construction for the generator $f$.
	
	Since the progressively measurable coefficient $f$ can be approximated in $L^2$ by time-step functions $\sum_{j=1}^N \varphi_j \mathbf{1}_{(t_{j-1}, t_j]}(t)$ with $\varphi_j \in \mathcal{F}_{t_{j-1}}$, the core task is to approximate $\varphi_j$ using a finite-dimensional noise projection. Because the characteristic measure is finite ($\nu(E) < \infty$), the jump process exhibits finitely many jumps almost surely. We partition the mark space $E$ into $M$ disjoint measurable subsets $E_1, \dots, E_M$, and define the finite-dimensional projection $H^{N,M}_t$ comprising both Brownian increments and discrete Poisson counts:
	\begin{equation*}
		H^{N,M}_t := \Big( W(t_1 \wedge t), \dots, W(t_N \wedge t), \mu\big((0, t_1 \wedge t] \times E_1\big), \dots, \mu\big((0, t_N \wedge t] \times E_M\big) \Big).
	\end{equation*}
	
	By the density of cylinder functions, any $\mathcal{F}_{t_{j-1}}$-measurable random variable can be approximated in $L^2$ by a measurable mapping of $H^{N,M}_{t_{j-1}}$. Crucially, although the jump components of $H^{N,M}$ take values in the discrete lattice $\mathbb{N}$, such mappings can be smoothed as follows. For each coordinate $k$ corresponding to a Poisson count, let $\psi_k: \mathbb{R} \to \mathbb{R}$ be the piecewise linear interpolation on $\mathbb{N}$ extended by zero outside $\mathbb{N}$, then set $\tilde{\psi}_k := \psi_k * \rho_\delta \in C^\infty(\mathbb{R})$, where $\rho_\delta$ is a standard mollifier supported on $(-\delta,\delta)$ with $\delta \in (0,1/4)$. This $\tilde{\psi}_k$ coincides with the original mapping on $\mathbb{N}$ and preserves the global Lipschitz constant $C$ (since convolution with a positive mollifier of mass 1 does not increase the Lipschitz constant). Moreover, the linear growth condition is preserved because $|\tilde{\psi}_k(t)| \le C(1+|t|)$ for all $t\in\mathbb{R}$. Applying this smoothing coordinate-wise and composing with the cylinder mapping yields a smooth cylinder function $\tilde{G} \in C^\infty_c$ evaluated on $H^{N,M}_{t_{j-1}}$ that approximates the target random variable. Aggregating these smooth interpolations globally yields the approximation $f^{N,M}$, which strictly preserves the required Lipschitz continuity and linear growth conditions.
	
	\medskip
	\noindent\textbf{Upgrading $L^2$ convergence to $L^\infty_{w_p}$ convergence.}
	The $L^2$ bounds in Lemma \ref{lem:cylinder_approx} are sufficient for the envelope construction after a Borel--Cantelli argument. Take $\varepsilon_m = 2^{-m}$ and let $E_m$ be the event that the total weighted error exceeds $m^{-1}$. By Chebyshev's inequality, $\mathbb{P}(E_m) \le C m^2 2^{-m}$, whence $\sum_{m=1}^\infty \mathbb{P}(E_m) < \infty$. The Borel--Cantelli lemma implies $\mathbb{P}(\limsup_{m\to\infty} E_m) = 0$, so on a full-measure set $\Omega_0$, there exists $M(\omega)$ such that for all $m \ge M$ the weighted error is bounded by $m^{-1}$. Consequently, along the subsequence $\varepsilon_m$, the convergence is uniform in $\omega$ on $\Omega_0$, i.e.\ in $L^\infty(\Omega; L^\infty_{w_p})$. The same argument applies to $\delta\Lambda^\varepsilon$ via the pathwise stability of the forward SDE. Hence, for all practical purposes in the uniqueness proof, we may work with $L^\infty_{w_p}$ convergence.
	
	\section{Proof of the Lyapunov Condition for the Jump Operator} \label{app:lyapunov}
	
	\textbf{Proof of Lemma \ref{lem:lyapunov_condition}.}
	Direct differentiation yields $|D\varphi(x)| \le p|x|^{p-1}$ and $\|D^2\varphi(x)\| \le p(p-1)|x|^{p-2}$. For the local drift and diffusion terms, the linear growth of $b$ and boundedness of $\sigma$ immediately yield $\langle b, D\varphi \rangle + \frac{1}{2} \operatorname{tr}(\sigma\sigma^\top D^2\varphi) \le C_1(1+|x|^p) = C_1\varphi(x)$.
	
	For the non-local jump operator, Taylor's expansion with integral remainder gives
	\begin{equation*}
		\varphi(x+g) - \varphi(x) - \langle D\varphi(x), g \rangle = \int_0^1 (1-\theta) \langle D^2\varphi(x + \theta g)g, g \rangle d\theta.
	\end{equation*}
	Using the elementary inequality $|x + \theta g|^{p-2} \le 2^{(p-3)^+}(|x|^{p-2} + |g|^{p-2})$ and the linear growth $|g| \le \rho(e)(1+|x|)$, the integral remainder is bounded by $C_p |g|^2 (|x|^{p-2} + |g|^{p-2}) \le C'_p (\rho(e)^2 + \rho(e)^p)\varphi(x)$.
	
	Integrating this upper bound over the mark space $E$ yields
	\begin{equation*}
		\int_E \big[ \varphi(x + g) - \varphi(x) - \langle D\varphi(x), g \rangle \big] \nu(de) \le C'_p \varphi(x) \int_E \big(\rho(e)^2 + \rho(e)^p\big) \nu(de).
	\end{equation*}
	By Assumption \ref{assum:coefficients}, the exponential moment condition ensures $\int_E \big( \rho(e)^2 \allowbreak + \rho(e)^p \big) \,\nu(de) \allowbreak < \infty$. Summing the local and non-local bounds completes the proof with $C_\varphi = C_1 + C'_p \int_E (\rho^2 + \rho^p) \,d\nu$.
	
	\medskip
	\noindent\textbf{Weighted Hessian bound for $w_p(x)^{-1}$.}
	For completeness, we record the estimate used in (\ref{eq:F_diff_bound}). Let $\psi(x) := w_p(x)^{-1} = 1 + |x|^p$. Direct computation gives:
	\begin{align*}
		D\psi(x) &= p|x|^{p-2}x, \\
		D^2\psi(x) &= p|x|^{p-2} I + p(p-2)|x|^{p-4}xx^\top,
	\end{align*}
	so $|D\psi(x)| \le C_p (1 + |x|^{p-1})$ and $\|D^2\psi(x)\| \le C_p (1 + |x|^{p-2})$. The Lyapunov condition from Lemma \ref{lem:lyapunov_condition} applied to $\varphi = \psi$ yields $\mathcal{L}^u\psi(x) \le C_\varphi \psi(x)/2$ (the factor $1/2$ is for convenience). Hence,
	\begin{equation}
		|D\psi(x)| + (1+|x|)\|D^2\psi(x)\| + \mathcal{L}^u\psi(x) \le C_\varphi \psi(x).
	\end{equation}
	This justifies the absorption of the spatial derivative terms in (\ref{eq:F_diff_bound}) into $(L_y + C_\varphi) Y^{\varepsilon,(N)}_s w_p(x)^{-1}$.

\end{document}